\pgfplotsset{compat=newest}
\theoremstyle{definition}
\newtheorem{theorem}{Theorem}[section]
\newtheorem{lemma}[theorem]{Lemma}
\newtheorem{proposition}[theorem]{Proposition}
\theoremstyle{remark}
\newtheorem{remark}[theorem]{Remark}
\newtheorem{example}[theorem]{Example}
\newtheorem{definition}[theorem]{Definition}
\newcommand{\R}{\mathbb{R}}
\newcommand{\Z}{\mathbb{Z}}
\newcommand{\V}{\mathcal{V}}
\newcommand{\E}{\mathcal{E}}
\newcommand{\F}{\mathcal{F}}
\newcommand{\Q}{\mathcal{Q}}
\numberwithin{equation}{section}
\author{You-Cheng Chou}
\address{Institute of Mathematics, Academia Sinica, Taiwan}
\email{bensonchou72@gmail.com}
\author{Chin-Lung Wang}
\address{Department of Mathematics, National Taiwan University, Taipei, Taiwan}
\email{dragon@math.ntu.edu.tw}
\author{Po-Sheng Wu}
\address{Department of Mathematics, National Taiwan University, Taipei Taiwan}
\email{bobson860412@gmail.com}
\title[Lam\'e equations with finite monodromy]{Characterization and enumeration \\ on Lam\'e equations \\ with finite monodromy}
\date{\today}
\begin{document}
\maketitle

\begin{abstract}
We give a complete characterization of the classical Lam\'e equations $y'' = (n(n + 1)\wp(z) + B)y$, $n \in \Bbb R$, $B \in \Bbb C$ on flat tori $E_\tau = \Bbb C/(\Bbb Z + \Bbb Z\,\tau)$ with finite monodromy groups $M$. 

Beuker--Waall had shown that such $n$ must lie in a finite number of arithmetic progressions $n_i + \Bbb N \subset \Bbb Q$ and they determined all corresponding $M$. By combining the theory of dessin d'enfants with the geometry of spherical tori, we prove the existence of $(B, \tau)$ for each such $n$ and provide a description of all such $(n, B, \tau, M)$. 

In particular, for a given $(n, M)$ with $n \not\in \tfrac{1}{2} + \Bbb Z$, we prove the finiteness of $(B, \tau)$ and derive an explicit counting formula of them. (The case $n \in \tfrac{1}{2} + \Bbb Z$ is a classical result due to Brioschi--Halphen--Crawford.)

The main ingredients in this work are (1) the definition and classification of basic spherical triangles with finite monodromy and (2) the process of attaching cells corresponding to $n \mapsto n + 1$ which reduces the problem to the basic case. 
\end{abstract}

\small
\tableofcontents
\normalsize

\section{Introduction and statements of results}

\subsection{Schwarz triangles and Klein's pullback}
Linear differential equations $L\, y = 0$ on $\mathbb{CP}^1$ (ODE) with only algebraic solutions had been studied actively since the nineteenth century. It is equivalent to requiring that $L\, y = 0$ has finite monodromy group $M_L$. 

For Gauss' hypergeometric equations, i.e., second order ODE on $\mathbb{CP}^1$ up to 3 regular singular points, H.A.~Schwarz \cite{Schwarz_1879} classified all possible finite monodromy groups through \emph{spherical triangles} named after him. 
\begin{itemize}
\item[$\bullet$]
The angles correspond to the exponent differences and 
\item[$\bullet$]
the monodromy group $M$ is generated by the reflections across the boundary circles. 
\end{itemize}
Indeed for $\alpha$, $\beta$, $\gamma\in \mathbb{C}$, the normal form of hypergeometric operator is
\[
H_{\alpha,\beta,\gamma} := \frac{d^2}{dx^2} + \Bigg( \frac{1-\alpha^2}{4x^2} + \frac{1-\beta^2}{4(x-1)^2} + \frac{\alpha^2 + \beta^2 -\gamma^2 -1}{4x(x-1)} \Bigg).
\]
It has regular singularities at $0$, $1$, and $\infty$ with exponent difference $\alpha$, $\beta$, and $\gamma$ respectively. The numbers $(\alpha$, $\beta$, $\gamma)$ up to permutations, sign changes and addition of $(l,m,n) \in \mathbb{Z}^3$ with $l+m+n$ even, are of the same type as their monodromy groups are isomorphic. 

For $M$ being finite, it must be cyclic (the case with 2 singular points), dihedral, tetrahedral, octahedral, or icosahedral. The cyclic case is elementary. For the remaining four types of groups, a complete list of 15 types of equations, known as the Schwarz list, was given. 

Meanwhile, F.~Klein \cite{Klein_1877} showed that any 2nd order ODE on $\mathbb{CP}^1$ with finite monodromy must arise from a pullback of a hypergeometric equation in the Schwarz list with the same projective monodromy group $PM$. In particular, under pullbacks, the Schwarz list is reduced to the following basic Schwarz list corresponding uniquely to the five types of groups:
\begin{table}[ht]
\begin{tabular}{p{3cm}  l}
$\big(1, \ \frac{1}{N}, \ \frac{1}{N} \big) $ &  cyclic ($C_N$)    \\\\
$\big(\frac{1}{2}, \ \frac{1}{2}, \ \frac{1}{N}\big) $ &  dihedral ($D_N$)    \\\\
$\big(\frac{1}{2}, \ \frac{1}{3}, \ \frac{1}{3}\big) $ &  tetrahedral ($A_4$)         \\\\
$\big(\frac{1}{2}, \ \frac{1}{3}, \ \frac{1}{4}\big) $ &  octahedral ($S_4$)         \\\\
$\big(\frac{1}{2}, \ \frac{1}{3}, \ \frac{1}{5}\big) $ &  icosahedral ($A_5$)        
\end{tabular}
\end{table}

In practice we need to (i) decide in finite steps whether an ODE has finite monodromy. If it does, we need to (ii) develop an effective method to compute the pullback map $\mathbb{CP}^1 \to \mathbb{CP}^1$. Klein had proposed a program for (i). For (ii), it is largely open if the ODE has four or more singular points. 

Klein's theorem and his program were generalized by Baldassarri and Dwork to second order ODE on compact Riemann surfaces:

\begin{theorem}[\cite{Klein_1877, Baldassarri_Dwork_1979, Baldassarri_1980}] \label{t:KBD}
Let $L$ be a second order differential operator on a Riemann surface $C$ with regular singularities and finite projective monodromy. Then $L$ is the pullback of the unique $H_{\alpha,\beta,\gamma}$ with the same $PM$ in the basic Schwarz list by a map $C \to \mathbb{CP}^1$.
\end{theorem}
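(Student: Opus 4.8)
The plan is to recover the pullback map directly from the \emph{Schwarz map} of $L$ (the ratio of two solutions), following Klein's original strategy, and then to check that it descends to a finite morphism $C\to\mathbb{CP}^1$. Fix a non-singular base point $p_0\in C$, a basis $y_1,y_2$ of local solutions of $Ly=0$, and set $s=y_1/y_2$; this is a multivalued meromorphic function on $C\setminus S$, where $S$ is the finite singular set, and analytic continuation acts on $s$ through the projective monodromy. Thus $s$ is equivariant for a representation $\rho\colon\pi_1(C\setminus S,p_0)\to\mathrm{PGL}_2(\mathbb{C})$ with image $PM$. Since $PM$ is finite, after a M\"obius change of the target coordinate it is a finite subgroup of $\mathrm{PSU}(2)$, hence cyclic, dihedral, tetrahedral, octahedral, or icosahedral. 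In each case the quotient $\pi\colon\mathbb{CP}^1\to\mathbb{CP}^1/PM$ is, by L\"uroth's theorem, a branched covering of $\mathbb{CP}^1$ ramified over at most three points with indices $(N,N)$, $(2,2,N)$, $(2,3,3)$, $(2,3,4)$, $(2,3,5)$; moving those points to $0,1,\infty$ presents $\pi$ as (the single-valued inverse of) the Schwarz map $\sigma_H$ of the basic hypergeometric operator $H:=H_{\alpha,\beta,\gamma}$ attached to that group type in the basic Schwarz list --- the cyclic case being the one where a third angle equals the apparent value $1$. Since $PM$ pins down $H$ uniquely, this will also give the uniqueness assertion.

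The candidate pullback map is $f:=\pi\circ s$. It is single-valued on $C\setminus S$: continuing $s$ around a loop $\delta$ replaces it by $\rho(\delta)\circ s$ with $\rho(\delta)\in PM$, and $\pi$ is $PM$-invariant, so $\pi\circ s$ is unchanged. Next I would extend $f$ across each $p\in S$. Finiteness of $PM$ forces the local monodromy of $L$ at $p$ to have finite order, hence to be semisimple (no logarithmic solutions) with rational exponent difference $\theta_p$; in a local coordinate $t$ at $p$ one then has the normal form $s\sim t^{\theta_p}\cdot(\text{unit})$, so $\pi\circ s$ has at worst a pole or a removable singularity at $p$. Hence $f$ extends to a meromorphic map $f\colon C\to\mathbb{CP}^1$, which, $C$ being compact, is automatically a finite branched covering; its ramification over $0,1,\infty$ is then controlled by the exponent differences of $L$ via Fuchs' relation.

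It remains to recognize $L$ as a pullback of $H$ along $f$. The ratio of solutions of $f^{*}H$ is $\sigma_H\circ f$, which, because $\sigma_H$ is a local inverse of $\pi$ up to a deck transformation, agrees with $s$ up to a fixed element of $\mathrm{PGL}_2(\mathbb{C})$. Thus $L$ and $f^{*}H$ have projectively equivalent solution ratios, hence the same Schwarzian in projective normal form, so they differ by a gauge transformation $y\mapsto g\,y$; equivalently $L$ is obtained from $H$ by the pullback $f$ followed by such a gauge transformation, which is precisely the Klein notion of pullback, and it shares the projective monodromy $PM$ with $H$.

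The main obstacle I expect is the passage from the analytically-defined developing map to an honest algebraic morphism: showing $f=\pi\circ s$ really extends meromorphically over $S$ and has finite degree, which rests on the regular-singular normal forms and on the observation that finiteness of $PM$ rules out logarithmic singularities and makes all exponent differences rational. A secondary, bookkeeping-type difficulty is to formulate ``pullback'' with enough care --- tracking the scalar ambiguity that distinguishes the full monodromy $M$ from $PM$, which is absorbed into the gauge factor $g$ --- so that the statement holds on the nose rather than merely up to projective equivalence.
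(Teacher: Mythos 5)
This theorem is quoted in the paper from Klein and Baldassarri--Dwork with citations only; the paper supplies no proof of it, so there is nothing internal to compare against. Your argument is the standard Klein--Baldassarri--Dwork proof and is essentially sound: single-valuedness of $\pi\circ s$ from $PM$-invariance of the quotient map, meromorphic extension across $S$ from the regular-singular normal form, and recovery of $L$ from $f^{*}H_{\alpha,\beta,\gamma}$ up to gauge because the two operators have projectively equivalent Schwarz maps and hence equal Schwarzians in normal form. Your key observation that finiteness of $PM$ rules out logarithmic solutions is correct and necessary (a nontrivial Jordan block has infinite order even projectively), as is the remark that "pullback" here must be understood in the weak sense, i.e.\ up to a gauge factor $y\mapsto g\,y$ absorbing the scalar discrepancy between $M$ and $PM$.

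One step deserves more care in a full writeup: the extension of $f=\pi\circ s$ across a singular point $p$. The clean way to finish it is to note that in the normalized coordinate one has $s=t^{\theta_p}h(t)$ with $h$ a holomorphic unit and $\theta_p=a/b$ in lowest terms, and that the local monodromy acts on the target by $w\mapsto e^{2\pi i\theta_p}w$, an order-$b$ element of $PM$; since $\pi$ is invariant under this rotation it is a rational function of $w^{b}$, so $\pi(s)=R(t^{a}h(t)^{b})$ is manifestly single-valued and meromorphic at $t=0$. Writing "$\pi\circ s$ has at worst a pole" without this invariance argument leaves open why a fractional power of $t$ composed with $\pi$ is even single-valued near $p$, which is exactly the point where finiteness of $PM$ is used a second time.
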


\subsection{Lam\'e equations}
The simplest higher genus case is an ODE on a torus with only one regular singular point, namely the Lam\'e equation. The finite monodromy theory of it, including classifications and enumerations, is the main topic of this paper. On $E_{\tau} = \mathbb{C} / (\mathbb{Z} + \mathbb{Z}\tau)$, it is given by  
%
%
%
%
%
\begin{equation} \label{eqn_Lame_ellipic}
L_{n,B}\, w := \frac{d^2 w}{dz^2} - \Big( n(n+1)\wp(z) + B \Big) w =0,
\end{equation}
where $n\in \mathbb{R}_{>0}$ and $B\in \mathbb{C}$. It has a regular singularity at $z=0$ with local exponents $\{-n, n+1\}$. Denote by $M$ (resp. $PM$) the monodromy group (resp. projective monodromy group) of equation \eqref{eqn_Lame_ellipic}.

Under $x=\wp(z)$, which induces a double cover $E \rightarrow \mathbb{P}^1$, we obtain its \emph{algebraic form} on $\mathbb{CP}^1$
\begin{equation} \label{eqn_lame_algebraic}
\widetilde{L}_{n,B}\, w := p(x) \frac{d^2 w}{dx^2} + \frac{1}{2} p'(x) \frac{dw}{dx} - \Big(n(n + 1)x + B \Big) w =0,
\end{equation}
where 
$$
p(x) = 4 x^3 -g_2 x - g_3 = 4 \prod\nolimits_{i=1}^3 (x - e_i).
$$ 
It has regular singular points at $\{e_1,e_2,e_3,\infty \}$ with local exponents $\{0, 1/2\}$ at $e_i$ and $\{ -n/2, (n+1)/2 \}$ at $\infty$. Denote by $\widetilde{M}$ and $P\widetilde{M}$ the corresponding groups of equation \eqref{eqn_lame_algebraic} over $\Bbb P^1$.

We are interested in the following question:
\medskip

\centerline{\emph{Classify all equations \eqref{eqn_Lame_ellipic} and \eqref{eqn_lame_algebraic} with finite monodromy.}}
\medskip

In the literature, many works have been done with a primary focus on the algebraic form (\ref{eqn_lame_algebraic}). The question has been studied in the following three special cases seperately: (1) $n\in \frac{1}{2} + \mathbb{\Bbb Z}_{\ge 0}$, (2) $n\in \mathbb{N}$, and (3) $n \notin \frac{\mathbb{N}}{2}$.

For (1), it was solved in late nineteenth century:
 
\begin{theorem}[Brioschi–Halphen–Crawford \cite{Crawford_1895}] \label{t:BHC}
Let $m \in \Bbb Z_{\ge 0}$. There exists a weighted homogeneous polynomial 
$$
p_m(B) = p_m(B; g_2, g_3)
$$ 
of degree $m + 1$ in $B$ with coefficients in $\mathbb{Z}[\frac{g_2}{4}, \frac{g_3}{4}]$ such that $p_m(B)=0$ if and only if the equation $\widetilde{L}_{m + 1/2,B}\, w = 0$ has finite projective monodromy $K_4$ (Klein-four). 
\end{theorem}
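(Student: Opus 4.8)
The plan is to show that for $n=m+\tfrac12$ the condition of finite monodromy collapses to a single algebraic condition on $B$ — namely that the one singular point of large exponent difference be \emph{apparent} (no logarithmic solution) — and then to extract the degree and the coefficient ring from the Frobenius recursion at that point.

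\emph{Step 1 (which group occurs).} At $\infty$ the local exponents of $\widetilde{L}_{m+1/2,B}$ are $-\tfrac{2m+1}{4}$ and $\tfrac{2m+3}{4}$, whose difference is the integer $m+1$; hence the local monodromy there is either scalar — equivalently trivial in $PGL_2$, the case in which $\infty$ is apparent — or it admits a logarithmic solution and is a nontrivial unipotent, of infinite order in $PGL_2$. At each $e_i$ the exponent difference $\tfrac12$ makes the local generator $g_i$ of $P\widetilde{M}$ an involution of order exactly $2$ (and there is no logarithm there). If a logarithm occurs at $\infty$, then $P\widetilde{M}$ contains an element of infinite order and the monodromy is infinite. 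If not, the relation $g_1g_2g_3g_\infty=1$ reads $g_1g_2g_3=1$, i.e. $g_3=g_2g_1$, so $(g_2g_1)^2=1$ forces $g_1g_2=g_2g_1$ and $P\widetilde{M}=\langle g_1,g_2\rangle$ is a quotient of $\mathbb{Z}/2\times\mathbb{Z}/2$; since no $g_i$ is trivial and $g_1=g_2$ would make $g_3$ trivial, $P\widetilde{M}\cong K_4$. Thus $\widetilde{L}_{m+1/2,B}$ has finite monodromy $\iff$ $P\widetilde{M}\cong K_4$ $\iff$ $\infty$ is apparent. (On $E_\tau$ the parallel picture, which I would also record, is that when $0$ is apparent the local monodromy at $0$ is $-I$, the two generators of $M$ anticommute, and $M$ is the quaternion group $Q_8$; the double cover $x=\wp(z)$ identifies apparency of $\infty$ for $\widetilde{L}_{m+1/2,B}$ with apparency of $0$ for $L_{m+1/2,B}$, and finiteness of $\widetilde{M}$ with finiteness of $M$, since the latter has finite index in the former.)

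\emph{Step 2 (apparency as a polynomial).} On $E_\tau$, writing $\wp(z)=z^{-2}+\sum_{l\ge1}a_lz^{2l}$, substitute the Frobenius solution $w=z^{-n}\sum_{k\ge0}c_{2k}z^{2k}$ attached to the exponent $-n$ into \eqref{eqn_Lame_ellipic} and match coefficients of $z^{-n+2k-2}$:
\[
4k(k-m-1)\,c_{2k}\;=\;B\,c_{2(k-1)}\;+\;n(n+1)\sum_{l=1}^{k-1}a_l\,c_{2(k-1-l)} .
\]
The indicial factor $4k(k-m-1)$ is nonzero for $1\le k\le m$, so after fixing a normalizing value of $c_0$ each $c_{2k}$ with $k\le m$ is a polynomial in $B$ of degree $k$; at $k=m+1$ the left side vanishes identically, and the obstruction to continuing logarithm-free is exactly the vanishing of the right side, which (after clearing denominators) defines $p_m(B)$. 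The term $B\,c_{2m}$ supplies the unique contribution in degree $m+1$, so $\deg_B p_m=m+1$ with nonzero leading coefficient.

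\emph{Step 3 (weights, integrality, conclusion).} The substitution $z\mapsto\lambda z$, $(g_2,g_3,B)\mapsto(\lambda^{-4}g_2,\lambda^{-6}g_3,\lambda^{-2}B)$ preserves \eqref{eqn_Lame_ellipic}, so $p_m$ is weighted homogeneous of weight $2(m+1)$ in $B,g_2,g_3$ of weights $2,4,6$; in particular its $B^{m+1}$-coefficient is a nonzero constant. To see that all coefficients lie in $\mathbb{Z}[\tfrac{g_2}{4},\tfrac{g_3}{4}]$ it is cleanest to run the same computation on the algebraic form \eqref{eqn_lame_algebraic} at $x=\infty$, where $p(x)/4=x^3-\tfrac{g_2}{4}x-\tfrac{g_3}{4}$ and the recursion, indicial and normalizing data are manifestly in $\mathbb{Z}[\tfrac{g_2}{4},\tfrac{g_3}{4}]$. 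Combining the three steps: $p_m(B)=0\iff\infty$ apparent $\iff P\widetilde{M}\cong K_4\iff$ finite monodromy. The main obstacle I anticipate is the bookkeeping of Step 3 — clearing the denominators that enter along the way (the primes $5,7,\dots$ in the $\wp$-expansion and the factor $\tfrac14$ from $n(n+1)=\tfrac{(2m+1)(2m+3)}{4}$) so that the coefficients genuinely land in $\mathbb{Z}[\tfrac{g_2}{4},\tfrac{g_3}{4}]$; the degree count of Step 2 and, given the classification of finite subgroups of $PGL_2$, the group theory of Step 1 are routine.
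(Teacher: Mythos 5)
The paper itself does not prove this statement: it is quoted as a classical theorem of Brioschi--Halphen--Crawford with a citation to Crawford (1895), so there is no in-text proof to compare yours against. Your route is the standard classical one and its core is correct. Step 1 (finite projective monodromy $\iff P\widetilde M\cong K_4\iff$ no logarithmic solution at the singularity with integer exponent difference) is sound $\mathrm{PGL}_2$ bookkeeping, and Step 2 correctly identifies $p_m$ with the Frobenius obstruction at the critical index $k=m+1$: the indicial factor is indeed $(2k-n)(2k-n-1)-n(n+1)=4k(k-m-1)$ for $n=m+\tfrac12$, it is nonzero for $1\le k\le m$, the obstruction has degree exactly $m+1$ in $B$ with leading term coming from $B\,c_{2m}$, and its vanishing is equivalent to the absence of a logarithm by standard Frobenius theory. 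Together with the scaling argument this yields existence, the degree, and the weighted homogeneity.

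The one genuine gap is the integrality clause. Your assertion that on the algebraic form at $x=\infty$ "the recursion, indicial and normalizing data are manifestly in $\mathbb{Z}[\tfrac{g_2}{4},\tfrac{g_3}{4}]$" is not true as written: carrying out that expansion with $w=x^{n/2}\sum_j b_jx^{-j}$ gives
\[
-4j(m+1-j)\,b_j \;=\; B\,b_{j-1}+\frac{g_2}{16}(2m+9-4j)(2m+7-4j)\,b_{j-2}+\frac{g_3}{16}(2m+13-4j)(2m+9-4j)\,b_{j-3},
\]
in which the $g_2$- and $g_3$-terms are $\tfrac{g_2}{4}$ and $\tfrac{g_3}{4}$ times a product of two \emph{odd} integers divided by a further $4$, and each step also divides by $4j(m+1-j)$, which contains odd primes. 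Hence the obstruction at $j=m+1$ a priori lies only in $\mathbb{Q}[\tfrac{g_2}{4},\tfrac{g_3}{4}]$, and an actual argument is still required that a single rational normalization clears every denominator into $\mathbb{Z}[\tfrac{g_2}{4},\tfrac{g_3}{4}]$. This can be completed --- for instance by renormalizing the $b_j$ by an explicit product of the indicial factors and tracking the $2$-adic valuations, or by passing to the classical Brioschi determinant arising from the symmetric square, whose entries visibly lie in that ring --- but as it stands this clause of the theorem is asserted rather than proved, while the characterization, the degree, and the homogeneity are established.
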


For (2), using Klein's theorem and the combinatorics of dessins d'enfants associated to the pullback (Belyi) map $\mathbb{CP}^1 \to \mathbb{CP}^1$, Dahmen proved a counting formula for equations (\ref{eqn_lame_algebraic}) with $n\in \mathbb{N}$ and $P\widetilde{M} \cong D_N$:

\begin{theorem}[\cite{Dahmen_2007_Dessin}] \label{thm_alg_dahmen} 

Let $n\in \mathbb{N}$. Denote by $P\widetilde{L}(n,N)$ the number of equations (\ref{eqn_lame_algebraic}) with $P\widetilde{M} \cong D_N$. If $N=1$, $P\widetilde{L}(n,1)=0$. If $N\geq 2$, 
\[
P\widetilde{L}(n,M) = \frac{n(n+1)}{12} \Big( \Phi(N) - 3\phi(N) \Big) + \frac{2}{3} \epsilon(n,N),
\]
where $\Phi(N) = \#\{(k_1, k_2) \mid {\rm gcd}(k_1, k_2, N) = 1,\, 0 \le k_i < N\}$ and
\[
\epsilon(n,N)=
\begin{cases}
1 \qquad \mbox{if $N=3$ and $n \equiv 1$ (mod $3$)},
\\
0 \qquad \mbox{otherwise}.
\end{cases}
\]
\end{theorem}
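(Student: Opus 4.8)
The plan is to establish the formula the way Dahmen did, by combining Klein's pullback theorem (Theorem~\ref{t:KBD}) with the combinatorics of dessins d'enfants. Fix $n\in\mathbb N$ and $N\ge 2$. Over $\mathbb{CP}^1$ each of the four regular singular points $e_1,e_2,e_3,\infty$ of $\widetilde L_{n,B}$ contributes an \emph{involution} to $P\widetilde M$, the exponent differences being $\tfrac12,\tfrac12,\tfrac12$ at the $e_i$ and $n+\tfrac12$ at $\infty$, so that each local projective monodromy is a half-turn; since a subgroup of $PSL_2(\mathbb C)$ generated by four involutions of trivial product is (a subgroup of) a dihedral group, this is why $D_N$ occurs. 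By Theorem~\ref{t:KBD}, $\widetilde L_{n,B}$ is, up to projective (gauge) equivalence, the pullback $f^{*}H_{1/2,1/2,1/N}$ of the basic dihedral hypergeometric operator along a rational map $f\colon\mathbb{CP}^1\to\mathbb{CP}^1$, which we normalize so that the two exponent-difference-$\tfrac12$ base points lie at $0,1$ and the exponent-difference-$\tfrac1N$ base point at $\infty$. The whole question becomes the enumeration of the admissible $f$ up to the evident equivalence.

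\textbf{Step 1: the ramification passport of $f$.} Under pullback the exponent difference at a point $Q$ over a base point of exponent difference $\theta$ equals $e_Q\theta$, where $e_Q$ is the ramification index, and $Q$ is a genuine (non-apparent) singularity of $f^{*}H$ exactly when $e_Q\theta\notin\mathbb Z$. Requiring that the genuine singularities of $f^{*}H$ be precisely $e_1,e_2,e_3$ and $\infty$ with the exponent differences above forces, for $N$ odd: every preimage of the base point $\infty$ has ramification index divisible by $N$; exactly one preimage of $\{0,1\}$ has ramification index $2n+1$ (it maps to $\infty$); exactly three further preimages of $\{0,1\}$ have odd ramification index, generically equal to $1$ (they map to $e_1,e_2,e_3$); every other preimage of $\{0,1\}$ has even ramification index. (When $N$ is even one extra preimage of $\infty$ is allowed, which is where corrections originate.) Feeding this into Riemann--Hurwitz for $f$ fixes the degree $d=d(n,N)$ of $f$ as an affine function of $n$ and pins down the cycle types of the three permutations $\sigma_0,\sigma_1,\sigma_\infty$, $\sigma_0\sigma_1\sigma_\infty=1$, encoding $f$.

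\textbf{Step 2: counting the dessins.} By the Grothendieck correspondence, admissible $f$ up to equivalence are in bijection with isomorphism classes of connected triples $(\sigma_0,\sigma_1,\sigma_\infty)$ of the passport of Step~1, i.e.\ with bicolored graphs $f^{-1}([0,1])\subset\mathbb{CP}^1$ all of whose faces are $2N$-gons, with exactly one vertex of degree $2n+1$ and three leaves and all other vertices of even degree. Such a dessin is a cyclic chain of faces wrapped around the sphere, and a choice of it amounts to a choice of a discrete gluing parameter --- contributing the factor $\tfrac{n(n+1)}{12}$, a quantity of order $n^{2}$ measuring the number of ways to lay the chain down --- together with a ``slope'' recording how the dihedral axis of $D_N$ lies relative to the two distinguished generators, namely a pair of residues $(k_1,k_2)\bmod N$ with $\gcd(k_1,k_2,N)=1$, counted by $\Phi(N)$. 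One then deletes the degenerate configurations whose monodromy group is a proper dihedral subgroup $D_{N'}$, $N'\mid N$, or is merely cyclic; after normalizing by the residual symmetry this replaces $\Phi(N)$ by $\Phi(N)-3\phi(N)$, while a Riemann--Hurwitz coincidence at $N=3$, occurring exactly when $n\equiv 1\pmod 3$, contributes the additional automorphism responsible for the term $\tfrac23\epsilon(n,N)$ (and, as a check, restores the integrality of the count). For $N=1$ the group is trivial and the list is empty, giving $P\widetilde L(n,1)=0$.

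\textbf{Main obstacle.} The real difficulty is Step~2. The ``rigidity'' of the dessin is only approximate: there is genuine freedom in how the $2N$-gon faces are chained and in where the even-degree preimages of $0$ and $1$ (and possibly higher odd ramification at the $e_i$) are inserted, so the count is not a one-line Burnside argument. The crux is to find a clean parametrization --- by the slope $(k_1,k_2)$ together with the discrete parameter counted by $\tfrac{n(n+1)}{12}$ --- and then to locate and excise exactly the loci on which the monodromy group drops, with careful bookkeeping of automorphisms. The small values $N\le 3$ and the exceptional residue class $n\equiv 1\pmod 3$ must be dealt with by hand, and this is precisely the source of the $-3\phi(N)$ correction, the $\epsilon$-term, and the vanishing $P\widetilde L(n,1)=P\widetilde L(n,2)=0$.
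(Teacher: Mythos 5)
Your outline reproduces the strategy of Dahmen's original paper (Klein pullback of $H_{1/2,1/2,1/N}$, ramification passport, Grothendieck correspondence), but it does not actually prove the theorem: the entire quantitative content of the formula lives in your Step~2, and there you only assert the three key ingredients --- the factor $\tfrac{n(n+1)}{12}$ from ``a discrete gluing parameter,'' the replacement of $\Phi(N)$ by $\Phi(N)-3\phi(N)$ after ``excising degenerate configurations,'' and the $\tfrac23\epsilon(n,N)$ term from ``a Riemann--Hurwitz coincidence at $N=3$'' --- without deriving any of them. You say yourself that this is ``the main obstacle'' and ``the real difficulty,'' and you leave it unresolved. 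As it stands the argument would equally well ``prove'' any formula of the shape $c_1 n(n+1)(\Phi(N)-c_2\phi(N))+c_3\epsilon$; nothing in the write-up pins down the constants or even establishes that the count is a polynomial in $n$ of that form. A complete proof along these lines requires an explicit parametrization of the admissible dessins (chains of $2N$-gonal faces with the prescribed vertex degrees), a proof that the parametrization is a bijection, an identification of exactly which triples generate a proper subgroup $D_{N'}\subsetneq D_N$ or a cyclic group, and a Burnside/automorphism count for the exceptional symmetric dessin at $N=3$, $n\equiv 1\pmod 3$. None of this is present.

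For comparison, the paper takes a genuinely different and much shorter route that avoids the dessin combinatorics entirely. Using the decomposition of the spherical torus into two balanced triangles built from a hemisphere by attaching hemispheres to the edges, it shows that the projective monodromy is encoded by a pair $(2s,2t)$ with $s=(\ell_2+\ell_3)/4\pi$, $t=(\ell_1+\ell_3)/4\pi$, and that for fixed $(\theta_1,\theta_2,\theta_3)$ the admissible $(2s,2t)$ fill out an explicit triangular region; the monodromy group is $D_N$ exactly when $(2s,2t)=(k_1/N,k_2/N)$ in lowest terms. Counting lattice points in that region gives $\sum_{d\mid N}\bigl(3P\widetilde L_n(d)-2\epsilon(n,d)\bigr)=\tfrac{n(n+1)}{2}(N^2-3N+2)$ (the factor $3$ and the $\epsilon$-term account for the relabeling of the three edges and its unique fixed point when $N=3$, $3\mid n-1$), and M\"obius inversion yields the stated formula. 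If you want to salvage your approach you must carry out the dessin enumeration in full; otherwise the lattice-point argument is the efficient path.
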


For the full monodromy group, let $\widetilde{L}(n, N)$ denotes the number of equations (\ref{eqn_lame_algebraic}) with $\widetilde{M} \cong D_N$. The following formula, conjectured by Dahmen, has been proved by Chen--Kuo--Lin \cite{Chen_Kuo_Lin_2021} using Painlev\'e equations, and later by Wu in his master thesis \cite{Wu_2022} using spherical structures on tori (cf.~\S \ref{s:SphG}).

\begin{theorem}[\cite{Chen_Kuo_Lin_2021, Wu_2022}] \label{thm_alg_dahmen_II}
With the same notation as in Theorem~\ref{thm_alg_dahmen}, 
\[
\widetilde{L}(n,N) = \frac{1}{2} \Bigg( \frac{n(n+1) \Phi(N) }{24} - \Big( a_n \phi(N) + b_n \phi(\frac{N}{2}) \Big) \Bigg) + \frac{2}{3} \epsilon(n,N),
\]
where $a_{2n}=a_{2n+1}= n(n+1)/2$ and $b_{2n} = b_{2n-1} = n^2$.
\end{theorem}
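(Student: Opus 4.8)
I would follow the geometric route of \S\ref{s:SphG} --- spherical metrics on tori, in the spirit of Wu's argument --- rather than Painlev\'e theory.

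\emph{Step 1 (structural reduction).} Since the local exponents of \eqref{eqn_Lame_ellipic} at $z=0$ differ by the integer $2n+1$, finite monodromy forces $z=0$ to be an apparent singularity; hence $M$ is generated by the two \emph{commuting} period maps, and, the Wronskian being constant (\eqref{eqn_Lame_ellipic} has no first-order term), $M$ is a finite cyclic subgroup of $\mathrm{SL}_2(\mathbb{C})$. The local monodromy of \eqref{eqn_lame_algebraic} at each $e_i$ has exponent difference $\tfrac{1}{2}$, hence is an involution of determinant $-1$; therefore the index-two overgroup $\widetilde M\supset M$ cannot be dicyclic and must satisfy $\widetilde M\cong D_{|M|}$, with $P\widetilde M\cong D_{|M|}$ or $D_{|M|/2}$ according as $|M|$ is odd or even (equivalently, as $-I\notin M$ or $-I\in M$). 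Thus $\widetilde L(n,N)=\#\{(B,\tau):M\cong C_N\}$, and by the dictionary of \S\ref{s:SphG} this counts the pairs $(B,\tau)$ for which $E_\tau$ carries a spherical metric with a single cone point of angle $2\pi(2n+1)$ at $0$ whose monodromy is cyclic of order exactly $N$ --- equivalently, for which the developing map of that metric, composed with a cyclic-invariant quotient $\mathbb{P}^1\to\mathbb{P}^1$, descends to a meromorphic function on $E_\tau$ and so defines a dessin d'enfant on $E_\tau$ with prescribed valencies.

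\emph{Step 2 (reduction by attaching cells).} The heart of the argument is an induction on $n$ via the operation $n\mapsto n+1$, which raises the cone angle by $4\pi$: geometrically it grafts an extra copy of $S^2$ at the cone point along a great-circle arc, and combinatorially it attaches a single cell (one pair of edges) to the dessin. I would show that this attaching operation is a bijection except along an explicit \emph{exceptional locus} --- spherical tori in which the arc to be deleted runs into a $2$-torsion point, collides with a branch point of the developing map, or forces the monodromy to collapse onto a proper dihedral subgroup $D_{N'}$ with $N'\mid N$ --- so that $\widetilde L(n+1,N)-\widetilde L(n,N)$ is \emph{linear} in $n$, with coefficient an explicit arithmetic function of $N$. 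Telescoping down to the base case then produces the quadratic main term $\tfrac{n(n+1)\Phi(N)}{48}$, the factor $\Phi(N)$ counting the cyclic directions of order exactly $N$ on the torus.

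\emph{Step 3 (base case, degeneracies, consistency).} For the base of the induction I would classify the spherical tori of minimal cone angle directly, in terms of the basic spherical triangles with finite monodromy and the dihedral entry $\bigl(\tfrac{1}{2},\tfrac{1}{2},\tfrac{1}{N}\bigr)$ of the basic Schwarz list; this also fixes the additive term $-\tfrac{1}{2}\bigl(a_n\phi(N)+b_n\phi(N/2)\bigr)$. The two totient terms are precisely the contributions of the degenerate structures isolated in Step 2 --- those with the cone point at a $2$-torsion point, split into two subcases according to whether $-I\in M$, which is why both $\phi(N)$ and $\phi(N/2)$ appear --- while the parity-dependent weights $a_{2n}=a_{2n+1}=n(n+1)/2$ and $b_{2n}=b_{2n-1}=n^2$ reflect an alternation, with the parity of $n$, in whether the newly attached cell is incident to the cone point. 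The term $\tfrac{2}{3}\epsilon(n,N)$ is the sporadic correction at the orbifold point $\tau=e^{\pi i/3}$ with $N=3$, $n\equiv 1\pmod{3}$, inherited verbatim from Theorem~\ref{thm_alg_dahmen}. As a cross-check, the inclusion $M\subset\widetilde M$ immediately yields $P\widetilde L(n,m)=\widetilde L(n,m)+\widetilde L(n,2m)$ for $m$ odd and $P\widetilde L(n,m)=\widetilde L(n,2m)$ for $m$ even, and one verifies that substituting Theorem~\ref{thm_alg_dahmen} is compatible with the claimed formula (using $\Phi(2m)=3\Phi(m)$ for $m$ odd and $\Phi(N)=4\Phi(N/2)$ for $4\mid N$).

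\emph{Main obstacle.} The delicate point is Step 2: giving a complete and non-redundant census of the degenerate spherical tori and of the accidental coincidences among attaching sites, and checking that the resulting bookkeeping reproduces \emph{exactly} the floor-type coefficients $a_n$ and $b_n$ --- their parity dependence on $n$ being the subtlest feature and needing to be propagated carefully through the telescoping induction.
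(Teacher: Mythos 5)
Your proposal lives in the same geometric framework as the paper (spherical tori, decomposition into a basic hemisphere-shaped triangle plus attached hemispheres, the dictionary $M\cong C_N\Leftrightarrow\widetilde M\cong D_N$), and Step 1 is essentially sound. But the core of the argument is missing. The paper does not run an induction on $n$; instead it parametrizes each Lam\'e equation with unitary monodromy by the pair $(s,t)\bmod 1$ of monodromy exponents of $w_\pm$, shows that for a fixed integer triple $(\theta_1,\theta_2,\theta_3)$ with $\theta_1+\theta_2+\theta_3=2n+1$ the admissible $(s,t)$ fill an explicit open triangle in the unit square (obtained from the $n=1$ case by translating by $\tfrac{\theta_i-1}{2}$ for each attached hemisphere), and hence that the number of spherical tori realizing a given $(s,t)$ is the piecewise count $\tfrac{l(l\pm1)}{2}$ of Figure~\ref{Figure:Number_ansatz_sol}, depending on which of the eight subregions cut out by $s,t=\tfrac12$ and $s+t\in\{\tfrac12,1,\tfrac32\}$ contains $(s,t)$ and on the parity of $n$. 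The theorem then follows by summing these weights over $(s,t)=(k_1/N,k_2/N)$ with $\gcd(k_1,k_2,N)=1$ and applying M\"obius inversion over $d\mid N$. Your proposal contains no substitute for this: the claim in Step~2 that attaching a cell is ``a bijection except along an explicit exceptional locus'' so that $\widetilde L(n+1,N)-\widetilde L(n,N)$ is linear in $n$ is exactly the hard combinatorial content, and you defer it (indeed you flag it yourself as the ``main obstacle''). As stated it is also not quite right: the difference is linear only up to parity- and $\bmod\,3$-dependent corrections, which is where $a_n$, $b_n$ and $\epsilon$ live, so the telescoping scheme would have to be run over residues of $n$ modulo $6$ and you give no mechanism for isolating monodromy of order \emph{exactly} $N$ (the $\gcd$ condition / M\"obius inversion step).

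Step 3 is also misattributed. The totient terms $a_n\phi(N)+b_n\phi(N/2)$ do not come from a ``degenerate locus of spherical tori with the cone point at a $2$-torsion point'' (the cone point is always at $z=0$); they are the linear-in-$N$ boundary contributions in the lattice-point count, arising because the weight $\tfrac{l(l\pm1)}{2}$ jumps across the lines $s=\tfrac12$, $t=\tfrac12$, $s+t\in\{\tfrac12,1,\tfrac32\}$, with $\phi(N/2)$ recording the lattice points that can actually land on the half-integer lines (possible only for $N$ even). Your consistency check $P\widetilde L(n,m)=\widetilde L(n,m)+\widetilde L(n,2m)$ for $m$ odd is a useful sanity test but of course proves neither formula. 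To turn the proposal into a proof you would essentially be forced to reconstruct the $(s,t)$-plane bookkeeping of the paper.
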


For the last case (3) $n\notin \mathbb{N}/2$, the list of all possible $n$ is known:

\begin{theorem}[Beukers--Waall \cite{Beukers_Waall_2004}] \label{Thm:B.-W.}
Let $0 < n \notin \mathbb{N}/2$. If $\widetilde{L}_{n,B}\, w = 0$ has finite monodromy group $\widetilde{M}$, then one of the following holds:
\begin{enumerate}
    \item $\widetilde{M}\cong G_{12}$ with $n \in \{ \pm \frac{1}{4} \}+ \mathbb{Z}$;
    \item $\widetilde{M}\cong G_{13}$ with $n \in \{ \pm \frac{1}{6} \} + \mathbb{Z}$;
    \item $\widetilde{M}\cong G_{22}$ with $n \in \{ \pm \frac{1}{10}, \pm \frac{3}{10}, \pm \frac{1}{6}\} + \mathbb{Z}$,
\end{enumerate}
where $G_i$ is the complex reflection group with Shepherd-Todd number $i$ \cite{Shephard_Todd_1954}.
\end{theorem}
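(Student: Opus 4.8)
The plan is to extract the constraints on $(n,\widetilde{M})$ from the local data of the algebraic Lam\'e equation \eqref{eqn_lame_algebraic} together with the Shephard--Todd classification of finite complex reflection groups, invoking Theorem~\ref{t:KBD} only for the dihedral case. If $\widetilde{M}$ is finite then every local monodromy has finite order, so its eigenvalues are roots of unity; in particular the ratio $-e^{2\pi i n}$ of the eigenvalues of the monodromy at $\infty$ (which has exponents $-\tfrac{n}{2},\tfrac{n+1}{2}$) is a root of unity, forcing $n\in\mathbb{Q}$. The monodromy at each $e_i$ has the distinct exponents $0,\tfrac12$, hence is conjugate to $\operatorname{diag}(1,-1)$, an order-$2$ complex reflection. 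Writing $g_1,g_2,g_3,g_\infty$ for the local monodromies in a standard presentation of $\pi_1(\mathbb{P}^1\setminus\{e_1,e_2,e_3,\infty\})$, the relation $g_1g_2g_3g_\infty=1$ gives $g_\infty=(g_1g_2g_3)^{-1}$, so $\widetilde{M}=\langle g_1,g_2,g_3\rangle$ is a finite complex reflection group of rank $\le2$ generated by order-$2$ reflections. Finally, since $n\notin\tfrac12\mathbb{Z}$ the root of unity $-e^{2\pi i n}$ is $\ne\pm1$, so the image $\bar g_\infty$ of $g_\infty$ in $P\widetilde{M}$ is nontrivial, of order $d\ge3$ equal to the multiplicative order of $-e^{2\pi i n}$.

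Next I would narrow down the type of $\widetilde{M}$. If $\widetilde{M}$ acts reducibly on $\mathbb{C}^2$ it is simultaneously diagonalizable, and each $g_i$ must equal $\operatorname{diag}(1,-1)$ up to a coordinate swap, whence $g_\infty$ has eigenvalue ratio $-1$ and $n\in\mathbb{Z}$ --- excluded. So $\widetilde{M}$ is irreducible of rank $2$. By the classification of finite subgroups of $\mathrm{PGL}_2(\mathbb{C})$, $P\widetilde{M}$ is cyclic, dihedral, or one of $A_4,S_4,A_5$; the cyclic case forces $\widetilde{M}$ abelian, hence reducible, and the $A_4$ case is impossible because the order-$2$ elements of $A_4$ generate only its Klein four-subgroup. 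Equivalently, by Shephard--Todd \cite{Shephard_Todd_1954}, a rank-$2$ irreducible finite complex reflection group generated by order-$2$ reflections is either imprimitive --- in which case $P\widetilde{M}$ is dihedral --- or one of the primitive groups $G_{12},G_{13},G_{22}$, with $PG_{12}=PG_{13}=S_4$ and $PG_{22}=A_5$.

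The main obstacle, which I expect to be where the real work lies, is excluding the dihedral case. Here I would use Theorem~\ref{t:KBD}: a dihedral $P\widetilde{M}\cong D_N$ means $\widetilde{L}_{n,B}$ is a pullback, along a rational map $\phi\colon\mathbb{P}^1\to\mathbb{P}^1$, of the $(\tfrac12,\tfrac12,\tfrac1N)$-hypergeometric equation. Each $e_i$, carrying the genuinely non-integral exponent difference $\tfrac12$, must then lie over one of the singular points $0,1,\infty$, of exponent difference $\theta\in\{\tfrac12,\tfrac1N\}$, with ramification index $r_i$ satisfying $r_i\theta\equiv\tfrac12\pmod{\mathbb{Z}}$; and $\infty$ lies over such a point with $r_\infty\theta\equiv n+\tfrac12\pmod{\mathbb{Z}}$. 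Carrying out the Riemann--Hurwitz bookkeeping for $\phi$ over the two hypergeometric points of exponent difference $\tfrac12$ --- equivalently, analysing the associated dessin d'enfant --- one finds these conditions jointly satisfiable only when $2n\in\mathbb{Z}$, contradicting $n\notin\tfrac12\mathbb{Z}$. (An alternative: $P\widetilde{M}$ being dihedral is equivalent to $\operatorname{Sym}^2\widetilde{L}_{n,B}$ possessing a solution $u$ with $u^2\in\mathbb{C}(x)$; its exponents $\{0,\tfrac12,1\}$ at each $e_i$ and $\{-n,\tfrac12,n+1\}$ at $\infty$, together with the parity constraint that an even number of singular points carry a half-integral exponent of $u$, again force $2n\in\mathbb{Z}$.)

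With the dihedral case excluded, $\widetilde{M}\in\{G_{12},G_{13},G_{22}\}$, and it remains to match each group with its progression of $n$ by computing $d=\operatorname{ord}(\bar g_\infty)$, where $\bar g_\infty^{-1}=\bar g_1\bar g_2\bar g_3$ is a product of three involutions that together generate $P\widetilde{M}$; only $d\ge3$ can occur, since $d\in\{1,2\}$ forces $n\in\tfrac12\mathbb{Z}$. For $\widetilde{M}\cong G_{12}$ the reflections project to transpositions of $S_4$, so $\bar g_\infty$ is odd of order $\ge3$, hence a $4$-cycle, giving $d=4$ and $n\in\{\pm\tfrac14\}+\mathbb{Z}$. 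For $\widetilde{M}\cong G_{13}$ the transposition-type reflections alone generate only the proper subgroup $G_{12}$, so a reflection triple generating $G_{13}$ must contain a double-transposition-type reflection (the other type-patterns giving dihedral or abelian projective image); then $\bar g_\infty$ is even of order $\ge3$, hence a $3$-cycle, giving $d=3$ and $n\in\{\pm\tfrac16\}+\mathbb{Z}$. For $\widetilde{M}\cong G_{22}$ the involutions are double transpositions of $A_5$, so $\bar g_\infty$ has order $3$ or $5$, giving $n\in\{\pm\tfrac16\}+\mathbb{Z}$ or $n\in\{\pm\tfrac1{10},\pm\tfrac3{10}\}+\mathbb{Z}$. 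Assembling these cases yields exactly the three alternatives of the statement.
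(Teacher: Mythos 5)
This statement is quoted from Beukers--Waall and the paper offers no proof of it, so there is nothing internal to compare against; I can only assess your argument on its own terms. Its overall architecture is the standard (and correct) one: the local monodromies at the $e_i$ are order-$2$ complex reflections generating $\widetilde M$, finiteness forces $n\in\mathbb{Q}$, the Shephard--Todd classification reduces the irreducible primitive case to $G_{12},G_{13},G_{22}$, and the admissible progressions of $n$ are read off from the order of $\bar g_\infty=(\bar g_1\bar g_2\bar g_3)^{-1}$ in $P\widetilde M$. That last group-theoretic matching is carried out correctly (including the observation that a generating reflection triple of $G_{13}$ must contain exactly one double-transposition type, so $\bar g_\infty$ is a $3$-cycle).

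The genuine gap is the exclusion of the dihedral (imprimitive) case, which is the real content of the theorem, and neither of your two arguments for it closes. The local data alone does \emph{not} force $2n\in\mathbb{Z}$: take $\bar g_2,\bar g_3$ to be ``reflections'' of the dihedral group (swapping the two invariant lines) and $\bar g_1$ the half-turn preserving them; then $\bar g_\infty$ lies in the rotation subgroup and may have any order $m\ge 3$, giving eigenvalue ratio $-\zeta^{\pm2}$ for $\zeta$ a primitive $m$-th root of unity and hence $n\equiv\pm k/m\pmod{\mathbb{Z}}$ with $m\ge 3$. Your parity argument does not kill this configuration: for $u=w_1w_2$ one finds $\operatorname{ord}_{e_1}u\equiv\tfrac12$, $\operatorname{ord}_{e_2}u\equiv\operatorname{ord}_{e_3}u\equiv 0$, and $\operatorname{ord}_\infty u\equiv\tfrac12\pmod{\mathbb{Z}}$, so exactly two singular points carry a half-integral exponent and the parity test is passed with $n\notin\tfrac12\mathbb{Z}$. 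The Riemann--Hurwitz version is likewise only asserted; the inequality $\#\phi^{-1}(0)+\#\phi^{-1}(1)+\#\phi^{-1}(\infty)\ge \deg\phi+2$ is satisfiable in this configuration, so the contradiction must come from a finer global analysis. One way to finish along your $\operatorname{Sym}^2$ line: in the surviving configuration $u^2$ is rational with odd order exactly at one $e_j$ and at $\infty$, so $u=S(x)\sqrt{x-e_j}$ with $S$ rational; since the exponents of $\operatorname{Sym}^2\widetilde L_{n,B}$ at each $e_i$ are $\{0,\tfrac12,1\}$ and at $\infty$ are $\{-n,\tfrac12,n+1\}$ with only $\tfrac12$ half-integral, $S$ must be holomorphic at every finite point (hence a polynomial) yet vanish at $\infty$, forcing $S=0$ --- a contradiction. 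Without some such completion, the dihedral case (and with it the whole theorem) is not proved.
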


In \cite{Beukers_Waall_2004}, an algorithm to construct a Lam\'e equation $\widetilde{L}_{n,B}\,w = 0$ for given $n, \widetilde{M}$ was provided by computing $\widetilde{M}$-invariant polynomials $J(w_1, w_2)$ where $w_1, w_2$ are independent solutions, and a list was computed for small $n$. However the complexity grows fast and it is hard to proceed for large $n$. 

Since the projective groups of $G_{12}$, $G_{13}$, and $G_{22}$ are different (being $A_4$, $S_4$, and $A_5$ respectively), to prove the existence for $n\notin \{\frac{1}{2}\} + \mathbb{Z}$, it suffices to consider $P\widetilde{M}$. Using pullbacks from the (not necessarily basic) Schwarz list, the existence for given finite $P\widetilde{M}$ was shown by Maier \cite{Maier_2004} in several, but not all, arithmetic progressions of the form $n_i + 3 \Bbb N$ or $n_j + 2\Bbb N$. The existence problem was finally solved by Chou in his bachelor thesis \cite{Chou_2016} by constructing the dessin for each possible $n \in \Bbb Q$ (cf.~\S \ref{s:dessin}).

%


\subsection{Summary of results}

The main tasks beyond existence result are (1) the exact counting formula and (2) explicit construction for each solution, given $n$ and a fixed finite ordinary or projective monodromy group on equation (\ref{eqn_Lame_ellipic}) or (\ref{eqn_lame_algebraic}).

In \S \ref{s:SphG} we give a complete answer to all the questions listed above. The main technique is the spherical structure on tori, which is motivated by the recent work of Eremenko et.~al.~\cite{Eremenko_Mondello_Panov_2023}. 


In \S \ref{ss:SphG}, we recall previous results on spherical tori, in particular the correspondence between Lam\'e equations of unitary monodromy and "balanced" spherical triangles (cf. Proposition~\ref{Prop:Sphe_tori_decomp}). The case of finite monodromy should be regarded as a special case of unitary monodromy.

In \S \ref{ss:basic}, we give a classification of "basic" spherical triangles with finite monodromies in {\bf Table~\ref{Table:Sph_triangle}}. (A spherical triangle is said to have finite monodromy means that the corresponding spherical torus has finite monodromy.) It consists of three types
\begin{enumerate}
    \item Non-dihedral type $(n \not\in \Bbb N /2)$: the list is classified by a given Platonic solid, the area $n\pi$, and the graph distance.
    \item Dihedral, non Klein-four type: the list is classified by edge lengths. 
    \item Klein-four type: we have complex 1-dimensional moduli of corresponding spherical tori. This is consistent with Theorem \ref{t:BHC}.
\end{enumerate}

The general balanced spherical triangles could be obtained from attaching digons of the shape of hemispheres to the edges. As a simple application, we count the number of spherical tori of given finite monodromies in {\bf Table~\ref{Table:Counting_sphe_tri}}. The counting results for $n \notin \mathbb{N}/2$ are new.

The list of all possible finite monodromy groups is given in {\bf Table~\ref{Table:List_mono_gps}}. For the type (1), the projective monodromy group uniquely determines the monodromy group. For the type (2), it takes extra care and will be discussed in \S \ref{ss:DF}.

In \S \ref{ss:DF}, we establish relations between edge lengths of the basic triangle and the monodromy group for the case of type (2). In particular, it gives a simple proof of Dahmen's formulas ({\bf Theorem \ref{thm_alg_dahmen}, \ref{thm_alg_dahmen_II}}). 

In \S \ref{s:dessin}, we explain the relation between spherical tori (with finite monodromy) and the corresponding dessin. We also discuss known algorithms to compute the pullback Belyi map via a given dessin. 

With the classification of basic spherical triangles, and hence spherical tori with finite monodromy, we have thus completed the classification of Lam\'e equations with finite monodromy, together with explicit constructions of the Lam\'e equations via dessins. 

\begin{remark}
This paper grew out as an attempt to solve the finite monodromy problem of Lam\'e equations by combining/extending the methods in the first author's bachelor thesis \cite{Chou_2016} (existence and description of conformal structure via dessin for $n \in \Bbb Q$) and the third author's master thesis \cite{Wu_2022} (enumeration for $n, N \in \Bbb N$ via spherical geometry), both supervised by the second author. For the sake of completeness we merge part of them, with extensions, in this paper rather than publishing them separately.      
\end{remark}

%

\section{Spherical geometry aspect} \label{s:SphG}

In this section we deal with the finite monodromy problem of Lam\'e equations \eqref{eqn_Lame_ellipic} and \eqref{eqn_lame_algebraic} within the aspect of spherical geometry.

\subsection{Spherical geometry} \label{ss:SphG}

\begin{definition} A spherical torus (with one conical singularity, omitted for short) $(T,x)$  is an oriented Riemannian surface $T$ of constant curvature $1$ and genus $1$, with a conical singularity $x$ of angle $2\pi\theta$, i.e., there is a local isometry of $T$ at $x$ to a spherical fan with a corner of angle $2\pi\theta$, and with its two edges being identified.
\end{definition}

Given a spherical torus, consider a developing map to the unit sphere
\[
f: \widetilde{T \backslash \{ x \} } \rightarrow S^2,
\]
where \ $ \widetilde{\cdot} $ \ denotes the universal cover. The pullback of the complex structure on $S^2 \cong \mathbb{CP}^1$ gives $T \backslash \{x\}$ a punctured Riemann surface structure. We sometimes identify a point on $T$ with its image on $S^2$ (with some choice of fundamental domain for $T$).

\begin{definition}
We say two spherical tori are projective equivalent if their developing maps $f_1$ and $f_2$ are differ by a M\"obius transformation $\gamma\in \textrm{PGL}(2,\mathbb{C})$, i.e., $f_2 = \gamma \circ f_1$.
\end{definition}

\begin{proposition}
There is a one-to-one correspondence between Lam\'e equations \eqref{eqn_Lame_ellipic} with unitary monodromy, and projective equivalence classes of spherical tori with a conical singularity of angle $(4n+2)\pi$.
\end{proposition}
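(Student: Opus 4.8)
The plan is to build two mutually inverse constructions, using as a dictionary the classical fact that a projective structure on a Riemann surface is recorded equivalently by a ratio $f=w_1/w_2$ of independent solutions of an operator $w''+Qw=0$ (no first-order term, so that the Wronskian is constant and the monodromy lies in $\mathrm{SL}(2,\mathbb C)$) and by its Schwarzian $Q=\tfrac12\{f,z\}$, which is single-valued exactly because the monodromy acts through M\"obius transformations. Starting from $L_{n,B}\,w=0$ with unitary monodromy: after normalization the monodromy lies in $\mathrm{SU}(2)$, so the projective monodromy lies in $\mathrm{PSU}(2)\cong\mathrm{SO}(3)=\mathrm{Isom}^+(S^2)$. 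Fix independent solutions $w_1,w_2$ and set $f=w_1/w_2$. Since the Wronskian is a nonzero constant, $f$ is a local biholomorphism on $E_\tau\setminus\{0\}$ (at a zero of $w_2$, which must be simple, $f$ has a simple pole), and its branches differ by elements of $\mathrm{PSU}(2)$, so the pullback $f^*g_{S^2}$ of the round metric is a genuine single-valued spherical metric there. At the puncture, unitarity forces the local monodromy to be semisimple, hence there are no logarithmic solutions and, by the local exponents $\{-n,n+1\}$, one has $f(z)=z^{-(2n+1)}u(z)$ with $u$ a holomorphic unit near $0$; since $w\mapsto 1/w$ is an isometry of $g_{S^2}$, reading this in the chart $1/f=z^{2n+1}/u(z)$ shows $f^*g_{S^2}$ has a conical singularity of angle $2\pi(2n+1)=(4n+2)\pi$. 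Replacing $(w_1,w_2)$ by another fundamental system changes $f$ by a fixed M\"obius transformation, so we obtain a well-defined projective equivalence class of spherical tori.

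Conversely, let $(T,x)$ be a spherical torus of cone angle $(4n+2)\pi$. Its complex structure extends over $x$ (a conical point is a regular point of the Riemann surface), so $T\cong E_\tau$ with $x=0$, and the developing map $f$ is a holomorphic projective structure on $E_\tau\setminus\{0\}$ with holonomy in $\mathrm{PSU}(2)$. Put $Q:=\tfrac12\{f,z\}$. M\"obius invariance of the Schwarzian makes $Q$ single-valued, hence an elliptic function; $Q$ is holomorphic on $E_\tau\setminus\{0\}$ because $f$ is an immersion there; and the local normal form $f=\gamma\circ(z\mapsto z^{2n+1})$ in a suitable coordinate at $x$ shows $Q$ has at worst a double pole at $0$ with leading coefficient $-n(n+1)$. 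But an elliptic function holomorphic off $0$ with a pole of order $\le 2$ there is necessarily of the form $a\wp(z)+b$ (it has no simple-pole part, by the residue theorem); comparing leading terms gives $a=-n(n+1)$, and one \emph{defines} $B:=-b$. Thus $\{f,z\}=-2\big(n(n+1)\wp(z)+B\big)$, i.e.\ $f=w_1/w_2$ for a fundamental system of $L_{n,B}\,w=0$, whose $\mathrm{SL}(2,\mathbb C)$-monodromy projects onto the holonomy of the torus and is therefore unitary. That the two assignments are inverse to one another is then formal: passing from an operator to the ratio of its solutions and passing from a projective structure to its Schwarzian are mutually inverse, and a projective structure determines its developing map up to $\mathrm{PGL}(2,\mathbb C)$, i.e.\ up to projective equivalence of the spherical torus.

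The main obstacle is the analysis at the conical point $z=0$. In the forward direction one must confirm that unitarity genuinely excludes logarithmic solutions — including the resonant values $2n+1\in\mathbb Z$, where $z=0$ is only an apparent singularity — so that the exponent difference $2n+1$ corresponds honestly to a cone of angle $(4n+2)\pi$ rather than to degenerate behaviour of the pulled-back metric. In the reverse direction one needs the standard local normal form for spherical cone metrics, namely that near a cone point of angle $2\pi\theta$ the developing map is $\gamma\circ(z\mapsto z^{\theta})$ after a holomorphic change of the source coordinate, so that $\{f,z\}$ is meromorphic at $0$ with precisely a double pole. Everything else — constancy of the Wronskian, M\"obius invariance and double periodicity of the Schwarzian, and the identification of elliptic functions with a single double pole with the pencil $a\wp+b$ which pins down $B$ — is routine.
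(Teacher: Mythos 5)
Your proof is correct and follows essentially the same route as the paper's (which is only a two-line sketch): the forward direction pulls back the Fubini--Study metric via $f=w_1/w_2$ and reads the cone angle off the exponent difference, and the converse recovers the operator from the Schwarzian derivative of the developing map. The extra details you supply --- exclusion of logarithmic solutions by semisimplicity of the unitary local monodromy, the local normal form $f=\gamma\circ(z\mapsto z^{2n+1})$ at the cone point, and pinning down $Q=a\wp+b$ with $a=-n(n+1)$ --- are exactly the steps the paper leaves implicit, and they check out.
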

\begin{proof}
Let $w_1$ and $w_2$ be two linearly independent solutions of equation \eqref{eqn_Lame_ellipic}. Then the pullback of the Fubini-Study metric on $\mathbb{CP}^1$ via $f=w_1/w_2$ gives $E$ a spherical structure with a conical singularity of angle $(4n+2)\pi$. The angle can be computed from the exponent differences of the equation.

Conversely, given a spherical torus with a conical singularity of angle $(4n+2)\pi$ and a developing map $f$, the corresponding Lam\'e equation is given by
\[
\frac{d^2w}{dz^2} + (Sf)(z) w=0,
\]
where $Sf$ is the Schwarz derivative of $f$. 
\end{proof}

\begin{definition}
A \textit{spherical polygon} is an oriented Riemannian surface of constant curvature $1$ with piecewise geodesic boundaries.
\end{definition}

\begin{definition} 
An interior angle $\pi\theta$ of a spherical polygon is said to be \textit{integral} if $\theta$ is integral.

\end{definition}

\begin{definition} A spherical triangle is called \textit{balanced} if the three interior angles $\pi\theta_1,\pi\theta_2,\pi\theta_3$ satisfy the triangle inequalities: 
$$
|\theta_2-\theta_3|\leq\theta_1\leq\theta_2+\theta_3,
$$ 
and is called semibalanced (resp.~strictly balanced) if any (resp.~none) of the equalities holds. 
\end{definition}

We have the following properties for balanced triangles.

\begin{proposition}[{\cite[Theorem 2.9]{Eremenko_Mondello_Panov_2023}}] A spherical triangle is balanced if and only if it contains a circumcenter, i.e.~a point which is equidistant to the three vertices. The circumcenter lies on some edge if and only if the spherical triangle is semibalanced and the circumcenter is the midpoint of the corresponding edge of the vertex with largest angle.
\end{proposition}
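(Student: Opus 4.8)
The plan is to read the balancedness inequalities off the decomposition of $T$ induced by a circumcenter, and to run this construction in reverse for the converse; throughout I will freely use that a spherical triangle is determined up to isometry by its three angles (rigidity, in the range relevant here), so it is enough to control the angle data. \emph{Necessity and the edge case.} Suppose $T$ contains a point $O$ with $d(O,A_1)=d(O,A_2)=d(O,A_3)=:R$, the vertices $A_i$ carrying angles $\pi\theta_i$. Join $O$ to each $A_i$ by a length-minimizing geodesic inside $T$; these exist by compactness and are disjoint away from $O$ (a crossing would contradict minimality), so they cut $T$ into three regions, each a geodesic triangle on $O,A_i,A_j$ with two sides of length $R$ issuing from $O$ — hence isosceles, with equal base angles $\beta_{ij}$ at $A_i$ and $A_j$ (the region degenerates to a segment, with $\beta_{ij}=0$, exactly when $O$ lies on the edge $A_iA_j$). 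Since the interior angle of $T$ at $A_i$ is $\beta_{ij}+\beta_{ik}$, one gets $\pi(\theta_i+\theta_j-\theta_k)=2\beta_{ij}\ge 0$ for each cyclic triple, i.e. the triangle inequalities, so $T$ is balanced; read contrapositively, an unbalanced triangle contains no circumcenter. Equality in a triangle inequality is equivalent to $\beta_{ij}=0$, i.e. to $O$ lying on the edge $A_iA_j$; then $\theta_k=\theta_i+\theta_j\ge\theta_i,\theta_j$, so $A_k$ has the largest angle, and $O$, lying on the segment $A_iA_j$ and equidistant from its endpoints, is its midpoint.

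\emph{Sufficiency.} Assume $(\theta_1,\theta_2,\theta_3)$ is balanced and put $\beta_{ij}:=\tfrac{\pi}{2}(\theta_i+\theta_j-\theta_k)\ge 0$. I want to produce a circumradius $R$ and isosceles spherical triangles $\tau_{ij}$ with equal sides $R$ and base angles $\beta_{ij}$ that glue around a common apex $O$ into a spherical triangle. Dropping the altitude from the apex of such a $\tau_{ij}$ and applying Napier's rules to the resulting right triangle gives $\tan(\psi_{ij}/2)=\cot\beta_{ij}/\cos R$ for the apex angle $\psi_{ij}$, a continuous function of $R$ on the interval of admissible radii, with $\psi_{ij}\to\pi-2\beta_{ij}$ as $R\to 0^+$. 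The three pieces glue into a disk with geodesic boundary and three corners precisely when $\psi_{12}(R)+\psi_{23}(R)+\psi_{31}(R)=2\pi$; since $\sum_{ij}\psi_{ij}(0^+)=3\pi-\pi(\theta_1+\theta_2+\theta_3)<2\pi$ (positivity of the area $\pi(\theta_1+\theta_2+\theta_3-1)$) while the sum exceeds $2\pi$ at the opposite end of the admissible range, the intermediate value theorem supplies such an $R$. The glued disk is a spherical triangle with vertex angles $\beta_{ij}+\beta_{ik}=\pi\theta_i$, hence isometric to $T$ by rigidity, and it visibly contains the circumcenter $O$; when a triangle inequality is an equality the corresponding $\tau_{ij}$ is degenerate and $O$ sits on that edge as its midpoint, matching the first part.

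\emph{Main obstacle.} The delicate step is this continuity argument. The apex-angle function $\psi_{ij}(R)$ must be followed along the correct branch — the naive $\arctan$ of $\cot\beta_{ij}/\cos R$ jumps by $\pi$ when $\cos R$ changes sign or when $\beta_{ij}$ crosses $\tfrac{\pi}{2}$ — and the interval of admissible $R$ itself depends on whether the $\beta_{ij}$ exceed $\tfrac{\pi}{2}$; moreover, for triangles of large area the glued apex can wind, so that ``$\sum_{ij}\psi_{ij}=2\pi$'' must be replaced by the correct multiple of $2\pi$ dictated by the angle data, and one must check that the continuity argument reaches that value first rather than overshooting. This is exactly where the constraints making $(\theta_1,\theta_2,\theta_3)$ an honest spherical triangle are genuinely needed. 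I also owe the two auxiliary facts used in the first part — that the minimizing geodesics from a point of $T$ to the vertices remain in $T$ and split it into three disks — which rest on the local structure of $T$ near a point it contains; I expect these to be routine but not entirely free.
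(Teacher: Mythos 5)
First, a point of reference: the paper does not prove this statement at all --- it is imported verbatim from Eremenko--Mondello--Panov (their Theorem~2.9) --- so there is no in-paper argument to compare yours with. Your necessity half is essentially the Delaunay-type decomposition underlying the cited result: cut $T$ along minimizing geodesics from the circumcenter $O$ to the vertices, read off $\pi(\theta_i+\theta_j-\theta_k)=2\beta_{ij}\ge 0$ from the equal base angles of the three isosceles pieces, and identify the equality case with $O$ landing on an edge as its midpoint. Modulo the point-set facts you yourself flag (that the three minimizing arcs are simple, pairwise disjoint away from $O$, cut $T$ into three membranes, and that the equal-base-angle claim persists for isosceles membranes that are not embedded in $S^2$), this direction is sound.

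The sufficiency half has a genuine gap. Your construction produces \emph{some} spherical triangle with angles $\pi\theta_1,\pi\theta_2,\pi\theta_3$ containing a circumcenter, and you then invoke ``rigidity'' to conclude it is isometric to the given $T$. But rigidity fails exactly when one of the $\theta_i$ is an integer: by Proposition~\ref{p:EMP}(2)--(3), such triangles come in one- or two-parameter families with the same angles, and these cases are not peripheral here --- they are the dihedral and Klein-four triangles on which the paper's counting rests. For those families your argument shows only that one member of the family contains a circumcenter, not that every balanced member does. Separately, the intermediate-value step that you label the ``main obstacle'' is not actually closed: you never verify that $\sum_{ij}\psi_{ij}(R)$ attains $2\pi$ (the behaviour at the far end of the admissible range of $R$, the choice of branch of the apex angle, and the degenerate pieces with $\beta_{ij}=0$ are asserted rather than checked), so even the existence of one such triangle is not fully established. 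A repair that avoids both problems is to argue on the given $T$ itself rather than by synthesis: for instance, show that the three closed cells $V_i=\{p\in T : d(p,A_i)\le d(p,A_j)\ \text{for all } j\}$ cover $T$ and use balancedness to force the bisectors $V_i\cap V_j$ to have a common point; this keeps the (possibly non-rigid) triangle fixed throughout and needs no reconstruction step.
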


\begin{proposition}[{\cite[Corollary 2.14]{Eremenko_Mondello_Panov_2023}}] \label{short} The three sides of a balanced triangle are all less than $2\pi$.
\end{proposition}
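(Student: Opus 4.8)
The plan is to reduce the statement to a statement about isosceles sub-triangles, using the circumcenter furnished by the preceding proposition. Let $\Delta$ be a balanced spherical triangle with vertices $A,B,C$ and interior angles $\pi\theta_1,\pi\theta_2,\pi\theta_3$, and let $O$ be a circumcenter, so that the intrinsic distances satisfy $d(O,A)=d(O,B)=d(O,C)=:R$, the circumradius. Joining $O$ to the three vertices by minimizing geodesics — these lie in $\Delta$ and are pairwise non‑crossing, which is implicit in the geometry behind the circumcenter proposition — cuts $\Delta$ into three spherical triangles $OBC$, $OCA$, $OAB$, each isosceles with the two sides issuing from $O$ of length $R$. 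The side of $\Delta$ opposite a vertex is exactly the base of one of these pieces, so it suffices to show that an isosceles spherical triangle whose equal sides have length $R$ and whose apex angle is some $\phi$ has base $<2\pi$. The three apex angles at $O$ sum to $2\pi$ when $O$ is interior, and to $\pi$ when $O$ lies on an edge; in either case each $\phi\in(0,2\pi)$.

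For the isosceles claim, assume first that the piece, say $OBC$, develops injectively into $S^2$. Then ordinary spherical trigonometry applies and the spherical law of cosines gives $\cos|BC|=\cos^2 R+\sin^2 R\,\cos\phi$. Since $\phi\in(0,2\pi)$ and $\sin R\neq0$ — which holds as soon as $R\notin\pi\mathbb Z$ — the right‑hand side is strictly less than $1$, so $|BC|\neq0$ and $|BC|\neq2\pi$; as $|BC|$ is then a positive length lying in $(0,2\pi)$ after development, we conclude $|BC|<2\pi$. Note that in the semibalanced case the circumcenter is the midpoint of an edge and the corresponding side equals $2R$ exactly, so the bound is essentially sharp; this already shows that one must establish $R<\pi$ rather than the softer $R\le\pi$.

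The real work — and the step I expect to be the main obstacle — is therefore to show (i) the circumradius satisfies $R<\pi$, and (ii) the sub‑triangles $OBC,OCA,OAB$ develop injectively. For (i) I would return to the construction of the circumcenter in \cite{Eremenko_Mondello_Panov_2023}, where the circumcenter is produced together with control on metric balls through the vertices: one wants to argue that the closed ball $\overline B(O,R)$ develops isometrically onto a spherical cap of radius $R$, and since a spherical surface with boundary cannot contain an embedded metric ball of radius $\ge\pi$ (a closed $\pi$‑ball already exhausts $S^2$, and it cannot have three distinct boundary points serving as corners of genuine angles), this forces $R<\pi$; once $R<\pi$ each sub‑triangle sits inside that embedded cap, which disposes of (ii) and lets the law‑of‑cosines computation go through. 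The delicate point is exactly extracting the strict bound $R<\pi$ and the embeddedness from the balanced hypothesis: in an arbitrary spherical surface with boundary a minimizing geodesic may well exceed length $\pi$, so the triangle inequalities on $\theta_1,\theta_2,\theta_3$ (equivalently, the very existence of the circumcenter with its three equal‑length, non‑crossing spokes) must be used in an essential way to rule this out. With that bound in hand, combining (i), (ii) and the previous paragraph yields that every side of $\Delta$ is shorter than $2\pi$.
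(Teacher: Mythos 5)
The paper offers no proof of this proposition: it is quoted verbatim from \cite[Corollary 2.14]{Eremenko_Mondello_Panov_2023}, so there is no internal argument to compare yours against, and your attempt has to stand on its own. As written it does not. Your outline (circumcenter, three isosceles spokes of length $R$, spherical law of cosines on each piece) is a plausible strategy, but the two steps you yourself isolate as ``the real work'' --- (i) $R<\pi$ and (ii) injective development of the pieces $OBC$, $OCA$, $OAB$ --- carry essentially all of the content of the proposition, and your sketch for them does not close. The assertion that $\overline B(O,R)$ ``develops isometrically onto a spherical cap'' is not something you can extract for free from the mere existence of an equidistant point: the vertices lie on the boundary sphere of that ball, the ball meets $\partial\Delta$, and for large cone angles $\pi\theta_i$ (the area $(2n+1)\pi$ is unbounded) the exponential map at $O$ restricted to $\Delta$ has no a priori injectivity. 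Likewise $R\le\pi$ does not follow from a naive Bonnet--Myers/conjugate-point argument, because a shortest path in a bordered cone surface may be a concatenation of interior geodesics and boundary arcs, each piece of length $\le\pi$ but with total length possibly larger; and even granting $R\le\pi$ you correctly observe that you need the strict bound, since in the semibalanced case $O$ is the midpoint of a side and that side has length exactly $2R$, so your whole argument there reduces to the unproved claim $R<\pi$. Finally, the existence, uniqueness and pairwise non-crossing of the three minimizing spokes from $O$ is asserted (``implicit in the geometry behind the circumcenter proposition'') rather than proved, and this is not automatic on a cone surface with boundary.

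In short: the reduction to the isosceles pieces is fine as bookkeeping, but the proposal defers the actual proof to properties of the circumscribed disk (embeddedness, $R<\pi$) that are exactly the nontrivial consequences of balancedness established in \cite{Eremenko_Mondello_Panov_2023}; without an argument that derives them from the triangle inequalities on $(\theta_1,\theta_2,\theta_3)$, the proof is incomplete. If you want a self-contained argument, the place to start is the construction of the circumcenter in \cite[\S 2]{Eremenko_Mondello_Panov_2023} (where it is produced together with an immersed circumscribed disk), not the law of cosines, which only pins down $\cos|BC|$ and cannot by itself distinguish a side of length $\ell$ from one of length $\ell+2\pi$.
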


\begin{proposition}[{\cite[Theorem~B, E]{Eremenko_Mondello_Panov_2023}}] \label{Prop:Sphe_tori_decomp}  
Let $(T,x)$ be a spherical torus with a conical singularity of angle $(4n+2)\pi > 0$.
\begin{enumerate}
\item If $n\not\in\mathbb Z$, then $T$ can be decomposed into two isometric balanced triangles. Conversely, any balanced triangle uniquely determines a spherical torus. A semibalanced triangle and its mirror image correspond to the same spherical torus. Also any two non-isometric balanced triangles correspond to different spherical tori.

\item If $n\in \mathbb Z$, every projective equivalence class of spherical tori is parametrized by $\mathbb R$ (modding out automorphisms). In each class, there is a unique spherical torus having the isometric spherical triangles decomposition as above.
\end{enumerate}
\end{proposition}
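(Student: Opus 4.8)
The plan is to reduce everything to the hyperelliptic involution. Write $T=E_\tau$ with the conical point $x$ at the origin and let $\sigma\colon z\mapsto -z$; it is a holomorphic involution fixing $x$ and the three half--lattice points $P_1,P_2,P_3$. The essential first step is to upgrade $\sigma$ to an \emph{isometry} of the spherical metric $g$ when $n\notin\mathbb Z$. The Schwarzian of a developing map $f$ is a meromorphic quadratic differential on $E_\tau$ whose only pole is a double pole at $x$ with vanishing residue (conicality forbids a logarithm), hence equals $-(n(n+1)\wp(z)+B)\,dz^2$ for some $B$; this recovers the Lam\'e form, and since $\wp$ is even it satisfies $\sigma^{\ast}Sf=Sf$, so $S(f\circ\sigma)=Sf$. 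Therefore $f\circ\sigma=\gamma\circ f$ for a unique M\"obius $\gamma$ with $\gamma^{2}=\mathrm{id}$, and comparing metrics gives $\sigma^{\ast}g=f^{\ast}(\gamma^{\ast}g_{FS})$. It remains to see $\gamma\in\mathrm{PSU}(2)$: the relation $f\circ\sigma=\gamma\circ f$ forces $\gamma$ to normalize the monodromy group $\Gamma\subset\mathrm{PSU}(2)$, and for $n\notin\mathbb Z$ the $\mathrm{SU}(2)$--lift of the loop around $x$ is $\operatorname{diag}(e^{-2\pi i n},e^{2\pi i n})\neq I$ (it is $-I$ when $n\in\tfrac12+\mathbb Z$ and non--central otherwise), so $\Gamma$ is not co--axial; one checks that a M\"obius transformation normalizing such a $\Gamma$ must lie in $\mathrm{PSU}(2)$, whence $\sigma^{\ast}g=g$.

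Granting this, for part (1) I would build the triangle from a $\sigma$--invariant ``seam''. For each $i$ choose a shortest geodesic arc $\mu_i$ from $x$ to $P_i$; since $\sigma$ acts near the regular point $P_i$ as the rotation by $\pi$, the union $\ell_i=\mu_i\cup\sigma(\mu_i)$ is a geodesic loop based at $x$ running straight through $P_i$. One verifies that (for a suitable choice) the $\ell_i$ are simple and pairwise disjoint away from $x$ and represent the classes $(1,0),(0,1),(1,1)$ in $H_1(T)$, so cutting $T$ along $\ell_1\cup\ell_2\cup\ell_3$ produces two triangles interchanged by $\sigma$; as $\sigma$ is orientation--preserving these are directly isometric, each with angles $\pi\theta_1,\pi\theta_2,\pi\theta_3$ satisfying $\theta_1+\theta_2+\theta_3=2n+1$ (equivalently $\operatorname{Area}=2n\pi=\tfrac12\operatorname{Area}(T)$ by Gauss--Bonnet), its side midpoints being $P_1,P_2,P_3$. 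Equivalently, $\widehat S=T/\sigma$ is a spherical sphere with cone angles $((2n+1)\pi;\pi,\pi,\pi)$ and the triangle is obtained by cutting $\widehat S$ along the tripod $\mu_1\cup\mu_2\cup\mu_3$. The point where ``balanced'' enters is the existence of a good seam: the shortest--arc choice forces the three vertices of $\Delta$ to be equidistant from a common point, i.e.\ $\Delta$ contains a circumcenter, so $\Delta$ is balanced by \cite[Thm.~2.9]{Eremenko_Mondello_Panov_2023} (and its sides are $<2\pi$ by \cite[Cor.~2.14]{Eremenko_Mondello_Panov_2023}).

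For the converse, given a balanced triangle $\Delta$ I would run this in reverse: by \cite[Thm.~2.9]{Eremenko_Mondello_Panov_2023} it has a circumcenter, joining it to the three vertices splits $\Delta$ into three isosceles pieces, and folding each side of $\Delta$ at its midpoint reassembles these into a sphere $\widehat S$ with cone data $((2n+1)\pi;\pi,\pi,\pi)$; then $T$ is the double cover of $\widehat S$ branched over the four cone points --- a spherical torus with one conical point of angle $(4n+2)\pi$, independent of the choices. Injectivity (``non--isometric balanced triangles give different tori'') is then immediate, since $\Delta$ with its marked side--midpoints is recovered from $T$ by the canonical cut above. In the semibalanced case one triangle inequality degenerates, so the circumcenter lies on the edge opposite the largest angle (again \cite[Thm.~2.9]{Eremenko_Mondello_Panov_2023}); the folding then ceases to distinguish the two orientations of $\Delta$, so $\Delta$ and its mirror image yield the same $T$.

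For part (2): when $n\in\mathbb Z$ the loop around $x$ has trivial lift $I$, so the lifted monodromy group is abelian, hence lies in a maximal torus of $\mathrm{SU}(2)$; the monodromy is co--axial, fixing two points of $\mathbb{CP}^1$, and the developing image lies in a pencil of circles through them. Such metrics admit the one--parameter family of co--axial deformations along the common axis, and modulo automorphisms this exhausts the isometry classes inside a fixed projective--equivalence class, producing the $\mathbb R$--parametrization; the symmetric member (parameter zero) retains the isometric involution and hence the two--triangle decomposition of (1), now with one or more integral angles, and it is the unique such member because the deformations break the involution. The main obstacle is the very first step --- promoting $\sigma$ to a genuine isometry, which is exactly where $n\notin\mathbb Z$ is used (for $n\in\mathbb Z$ it fails, and that failure is the source of the moduli); a secondary difficulty is the combinatorial cut--and--paste bookkeeping needed to show that ``balanced'' is precisely the class of triangles occurring and to handle the degenerate integral--angle and semibalanced configurations where arcs or sub--triangles collapse.
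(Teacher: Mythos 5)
This proposition is imported verbatim from Eremenko--Mondello--Panov (their Theorems~B and~E); the paper gives no proof of its own, so there is nothing internal to compare against. Your sketch follows the same route as the cited source: promote the elliptic involution $z\mapsto-z$ to an isometry of the spherical metric (via the evenness of the Lam\'e Schwarzian and the non-coaxiality of the monodromy when $n\notin\mathbb Z$), cut along the three geodesic loops through the half-periods, and treat $n\in\mathbb Z$ as the coaxial case carrying the $\mathbb R$-family of deformations. The skeleton is sound, and your computation of the lifted commutator (trivial iff $n\in\mathbb Z$, equal to $-I$ for $n\in\tfrac12+\mathbb Z$) correctly isolates where the hypothesis enters. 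Two small points: the absence of a residue term in $Sf$ is most cleanly seen from the fact that an elliptic function with a single pole has vanishing residue there, not from ``conicality forbids a logarithm''; and each triangle has area $2n\pi$ (half of the torus area $4n\pi$), as you say.

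The genuine gaps are exactly the two steps you flag at the end, and they are not minor: they are the content of Theorem~B. First, the claim that ``the shortest-arc choice forces the three vertices of $\Delta$ to be equidistant from a common point'' is not an argument --- the three vertices are all copies of the single cone point $x$, so one must say which point of $\Delta$ is the circumcenter and why it lies \emph{inside} the triangle; establishing that some system of arcs yields a \emph{balanced} triangle, and controlling the non-balanced triangles produced by other choices, is where Eremenko--Mondello--Panov do real work. Second, your injectivity claim (``$\Delta$ is recovered from $T$ by the canonical cut'') presupposes that the cut is canonical, which you have not shown: a priori there may be several systems of geodesic loops through the $P_i$, hence several balanced triangles for one torus, and ruling this out (together with the precise statement that only the semibalanced mirror pairs get identified) is again the substance of the uniqueness assertion. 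As a proof of the cited theorem the proposal is therefore an accurate outline with the two hardest steps asserted rather than proved; as a justification for invoking the result in this paper, the citation suffices.
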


For any spherical torus with such a decomposition, we denote by $\triangle, \triangle'$ the two spherical triangles, and $L_i, P_i, Q_i$ (resp.~$L_i', P_i', Q_i'$) the edges, vertices and midpoint of edges on $\triangle$ (resp.~$\triangle'$), such that $P_1,P_2,P_3$ are ordered \emph{clockwise} on $\partial\triangle$ (so that $L_1,L_2$ correspond to the two periods of the fundamental domain of $E$). Denote $(\pi\theta_1,\pi\theta_2,\pi\theta_3)$ the three interior angles of the spherical triangle. The spherical triangle has area $(2n+1)\pi$ and sum of interior angles $\pi\theta$, where 
$$
\theta = \theta_1 + \theta_2 + \theta_3 = 2n + 1.
$$

To glue $\triangle$ and $\triangle'$ into $(T,x)$, we simply glue $L_i$ along $L_i'$ so that the orientations on both sides are compatible, $i=1,2,3$. All the six vertices of the two spherical triangles are thus identified to form the conical singularity (e.g. Figure~\ref{red_path}).

\subsection{Counting tori with finite monodromy} \label{ss:basic}

There are four kinds of monodromies, namely the ordinary or projective monodromy, with respect to the elliptic curve $E$ (resp. $\mathbb {CP}^1$ for the algebraic form), which are denote by $M$ and $PM$ (resp.~$\widetilde{M}$ and $P\widetilde{M}$). The projective monodromies lie in $\mathrm{SO}(3)\cong\mathrm{PU}(2)$. $M$ lies in $\mathrm{SU}(2)$, while $\widetilde{M} \subset \mathrm{U}(2)$ lies in the group generated by $\mathrm{SU}(2)$ and $iI_2$. 

It is known that the finiteness of the four monodromies are equivalent. Indeed, $M$ is an index $2$ subgroup of $\widetilde{M}$, $P\widetilde{M}$ and $PM$ are quotient images of $\widetilde{M}$ and $M$ with kernels lying in $\langle iI_2\rangle$ and $\langle -I_2\rangle$ respectively.

\begin{definition} 
Denote by $R(l) = R(p)$ the axial reflection along an axis $l = \vec{Op}\subset \R^3$ passing through a point $p\in S^2$.
\end{definition}

Denote by $\gamma_i\in \mathrm{U}(2)$ the local (ordinary) monodromy of the algebraic form \eqref{eqn_lame_algebraic} at $e_i\in \mathbb{CP}^1$, which satisfies $\gamma_i^2=1$ and $\det \gamma_i = -1$ for $i = 1, 2, 3$. 
$$
\widetilde{M} = \langle \gamma_1, \gamma_2, \gamma_3 \rangle, \qquad P\widetilde{M} = \langle R(Q_1), R(Q_2), R(Q_3) \rangle.
$$ 

For the elliptic form \eqref{eqn_Lame_ellipic}, $M$ is an index $2$ subgroup of $\widetilde{M}$ and
$$
M = \langle \gamma_1\gamma_3, \gamma_2\gamma_3 \rangle, \qquad PM = \langle R(Q_1)R(Q_3), R(Q_2)R(Q_3) \rangle.
$$

Given $Q_1,Q_2,Q_3\in S^2$, we can read out the projective monodromy directly from the viewpoint of spherical geometry. For ordinary monodromy this is slightly indirect. Consider the standard double cover $\mathrm{SU}(2)\rightarrow \mathrm{PU}(2)\cong\mathrm{SO}(3)$. We view $R(Q_i)\in \mathrm{SO}(3)$ as rotating an angle of $\pi$ counterclockwise along $Q_i$. As the rotation angle increases from $0$ to $\pi$, it gives a path on $\mathrm{SO}(3)$ from identity to $R(Q_i)$. Then $R(Q_i)$ has a unique lift 
$$
\widetilde{R(Q_i)}\in\mathrm{SU}(2)
$$ 
with the double cover using this path. $\gamma_i$ is then given by 
$$
\gamma_i = i\widetilde{R(Q_i)} \in \mathrm{U}(2).
$$

(The conditions $\gamma_i^2=1$ and $\det \gamma_i=-1$ already implies $
\gamma_i = \pm i\widetilde{R(Q_i)}
$. To determine the sign, we may assume $w_1, w_2$ to be the solutions of exponents $1/2,0$ at $e_i$ respectively, then along a loop around the origin, the ordinary monodromy would be $r_i=\begin{pmatrix}-1 & 0\\0 & 1\end{pmatrix}$. The projective monodromy of rotating an angle of $\delta$ along $0\in \mathbb{CP^1}$ would be parametrized as $z\mapsto e^{i\delta}z$, which lifts to $\begin{pmatrix}e^{i\delta/2} & 0\\0 & e^{-i\delta/2}\end{pmatrix}$ on $\mathrm{SU}(2)$, thus the unique lift is $\widetilde{R(Q_i)}=\begin{pmatrix}i & 0\\0 & -i\end{pmatrix}$ and we have $\gamma_i = i\widetilde{R(Q_i)}$).

The moduli of spherical triangles have been well studied by A. Eremenko et al.~\cite{Eremenko_Gabrielov,Eremenko_Mondello_Panov_2023}. Here we list two more properties of spherical triangles which are direct consequences of their works. 

\begin{proposition} Any spherical triangle $\triangle$ with $n\leq 1$ can be embedded into a hemisphere.
\end{proposition}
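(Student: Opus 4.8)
The plan is to place $\triangle$ inside a closed hemisphere by exploiting its circumcenter — which exists because $\triangle$, being one of the two triangles that decompose a spherical torus, is balanced — together with the area bound $\mathrm{Area}(\triangle)=2n\pi\le 2\pi$ coming from $n\le 1$, and a Jordan-curve argument on $S^2$. By the proposition preceding, $\triangle$ contains its circumcenter $O^*$, equidistant — say at distance $\rho$, the circumradius — from $P_1,P_2,P_3$, and $\triangle$ is the union of the isosceles geodesic sub-triangles $O^*P_iP_j$ (meeting along the common ray $O^*P_1$, with only two of them, when $\triangle$ is semibalanced and $O^*$ is the midpoint of the edge opposite the largest angle). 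In particular $\triangle$ is star-shaped with respect to $O^*$, and every point of $\triangle$ is joined to $O^*$ by a geodesic ray of length $<\pi$ lying in $\triangle$. Since the developing map $f$ is a local isometry preserving the total angle $2\pi$ (resp.~$\pi$) at $O^*$, it sends distinct rays from $O^*$ to distinct geodesic arcs from $f(O^*)$ meeting only at $f(O^*)$; hence $f|_\triangle$ is injective and $f(\partial\triangle)$ is a simple closed curve.

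I would then produce a point $c\in S^2$ with $f(\partial\triangle)\subseteq\bar B(c,\pi/2)$, by cases on the edge lengths (all $<2\pi$ by Proposition~\ref{short}). If all three edges have length $<\pi$ they are the unique minimizing geodesics between their endpoints; the vertices lie in the convex ball $\bar B(O^*,\rho)$ when $\rho\le\pi/2$, or in the convex ball $\bar B(-O^*,\pi-\rho)$ when $\rho>\pi/2$, so that ball — hence a closed hemisphere around $c:=O^*$ or $c:=-O^*$ — also contains the three minimizing edges. If some edge $L_m$ has length $\ge\pi$, the sub-triangle picture forces it (up to one such edge) to be a long arc of a great circle $G$ with $\triangle$ lying on one side of $G$; taking $c$ to be the pole of $G$ on that side gives $L_m\subseteq G=\partial\bar B(c,\pi/2)$, while the remaining two edges have length $<\pi$ and join points of $\bar B(c,\pi/2)$, hence stay inside it.

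Finally, the simple closed curve $f(\partial\triangle)\subseteq\bar B(c,\pi/2)$ divides $S^2$ into two regions of areas $\mathrm{Area}(\triangle)=2n\pi\le2\pi$ and $4\pi-2n\pi\ge2\pi$; the complementary open hemisphere $S^2\setminus\bar B(c,\pi/2)$ is connected, disjoint from $f(\partial\triangle)$, and has area $2\pi$, so it must lie in the larger region, forcing $f(\triangle)\subseteq\bar B(c,\pi/2)$. (When $n=1$ both areas equal $2\pi$ and $f(\triangle)$ is itself a closed hemisphere, which is consistent.) Hence $f$ embeds $\triangle$ into a closed hemisphere.

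The step I expect to be the main obstacle is the case analysis producing $c$: one must control which edges can have length $\ge\pi$ — ruling out the occurrence of two or three such edges and understanding the geometry when exactly one occurs (the semibalanced situation) — and, relatedly, one needs the precise list of realizable balanced angle data $(\theta_1,\theta_2,\theta_3)$ with $\theta_1+\theta_2+\theta_3\le3$, since not every triple satisfying the triangle inequalities occurs; this is where one must lean on the structural classification of Eremenko--Mondello--Panov. The trigonometric identities in the sub-triangles (Napier's rules after dropping the perpendicular from $O^*$ onto a base) and the convexity properties of spherical balls used above are routine, and the borderline cases $\rho=\pi/2$, $n=1$, and semibalanced $\triangle$ — where various inequalities become equalities and $\triangle$ may already be a full closed hemisphere — require only separate bookkeeping.
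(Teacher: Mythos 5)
The paper does not actually prove this proposition: it is listed, together with Proposition~\ref{p:EMP}, as a ``direct consequence'' of the work of Eremenko--Mondello--Panov, so there is no argument of record to compare yours against and I can only assess your sketch on its own terms. Your overall architecture is reasonable: show $f|_\triangle$ is injective, trap $f(\partial\triangle)$ in a closed hemisphere $\bar B(c,\pi/2)$, and then use the Jordan curve theorem together with $\mathrm{Area}(\triangle)=2n\pi\le 2\pi$ to conclude. (Note that you are implicitly proving the statement only for \emph{balanced} triangles, since you invoke the circumcenter; that is the relevant class here, as these are the triangles arising from the torus decomposition, but the hypothesis should be made explicit.) The final Jordan-curve/area step, including the boundary case $n=1$, is correct as you describe it.

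However, the two steps that carry the actual geometric content are asserted rather than proved, and you have flagged them yourself. First, injectivity of $f|_\triangle$ rests on the claim that every point of $\triangle$ is joined to the circumcenter $O^*$ by a geodesic of length $<\pi$ inside $\triangle$. This does not follow merely from writing $\triangle$ as a union of isosceles sub-triangles $O^*P_iP_j$: when a base $L_k$ is a major arc (the primed entries of Table~\ref{Table:Sph_triangle}, e.g.\ the icosahedral $(1,1,2')$ triangle with $n=9/10$, whose long edge has length $\approx 1.35\pi$), the sub-triangle on that base is most of a hemisphere, and one must actually verify that the rays from $O^*$ sweeping it out have length $<\pi$ and stay inside. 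Second, and more seriously, the production of the centre $c$ hinges on two unproved claims: that a balanced triangle with $n\le 1$ has at most one edge of length $\ge\pi$, and that such a triangle lies entirely on one side of the great circle $G$ containing that edge. ``The sub-triangle picture forces it'' is not an argument; in particular you must exclude the configuration in which the opposite vertex sits on the far side of $G$, reached through the short complementary arc of $G$. (Note also that the hemisphere obtained in this case is \emph{not} centred at the circumcenter --- in the $(1,1,2')$ example the three vertices are linearly independent, so no point is equidistant $\pi/2$ from all of them --- which shows the circumscribed-ball picture alone cannot settle this case.) These facts can be extracted from the explicit classification and side-length formulas of Eremenko--Mondello--Panov, which is presumably why the paper cites that work instead of giving a proof, but as written your proposal does not establish them.
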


\begin{proposition}[{\cite[\S 3]{Eremenko_Mondello_Panov_2023}}] \label{p:EMP}
Let $\pi \theta_i$, $i = 1, 2, 3$ be the interior angles.
\begin{enumerate}
\item If $\theta_i \not\in \mathbb{N}$ for all $i$, then $\triangle$ exists if and only if 
$$
|\theta_1 - n_1| + |\theta_1 - n_2| + |\theta_1 - n_3| > 1
$$ 
for some integers $n_1,n_2,n_3$ with $\sum n_i$ even. Such spherical triangle $\triangle$ is uniquely determined by $\theta_i$'s up to isometry.

\item If there is exactly one of $\theta_i \in \mathbb{N}$, say $\theta_1$, then $\triangle$ exists if and only if either both $
\theta_1 + \theta_2 - \theta_3$ and $\theta_1 + \theta_3 - \theta_2$ are odd positive integers, or $ \theta_1 - (\theta_2 + \theta_3)$ 
is an odd positive integer. (The latter case won't happen when $\triangle$ is balanced.) Such spherical triangle $\triangle$ is uniquely determined by $\theta_1,\theta_2,\theta_3$ and the edge length of $L_2$. 

\item If at least two of $\theta_i$, say $\theta_1$ and $\theta_2$, then $\triangle$ exists if and only if $\theta_i \in \mathbb{N}$ for all $i$ with $\sum \theta_i$ being odd, and $\triangle$ must be balanced. Also $\triangle$ is uniquely determined by $\theta_i$'s and the edge lengths of $L_1, L_2$. 
\end{enumerate}
\end{proposition}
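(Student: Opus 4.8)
The plan is to derive all three parts from the analysis of (generalized, possibly immersed) spherical triangles in \cite[\S 3]{Eremenko_Mondello_Panov_2023}, building on \cite{Eremenko_Gabrielov}, and to translate their existence, rigidity and moduli statements into the labelling $L_i, P_i, Q_i$ fixed above. The organizing object is the developing map of $\triangle$: it carries the three geodesic edges onto three great circles $C_1, C_2, C_3 \subset S^2$, with the two circles meeting at a vertex $P_i$ crossing there at angle $\pi\theta_i$ modulo $\pi$; doubling $\triangle$ across its boundary yields a curvature-$1$ sphere with three conical points of angles $2\pi\theta_i$ whose monodromy is generated by the reflections in $C_1, C_2, C_3$. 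An angle $\theta_i \in \mathbb N$ forces the two circles at $P_i$ to coincide, so the number of integral angles equals the number of forced coincidences among the $C_i$; this number both obstructs existence and creates one free edge-length parameter per coincidence, which is precisely the trichotomy of the statement.

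For (1) no two $C_i$ are forced equal, the configuration is rigid, and the uniqueness part of \cite[\S 3]{Eremenko_Mondello_Panov_2023} determines $\triangle$ up to isometry from $(\theta_1, \theta_2, \theta_3)$ once it exists. For existence I would argue through the double: $\triangle$ fails to exist exactly when every curvature-$1$ metric on $S^2$ with those cone angles is \emph{coaxial} (the three cone-point rotations share a common axis, so the projective monodromy is cyclic), and for non-integral $\theta_i$ this happens precisely on the walls $\theta_1 \pm \theta_2 \pm \theta_3 \in 1 + 2\mathbb Z$. Among the chambers complementary to these walls, the ones carrying a genuine (hence non-coaxial) triangle are exactly those for which $\sum_i |\theta_i - n_i| > 1$ for every integer triple $(n_1, n_2, n_3)$ with $\sum n_i$ even — only finitely many such triples being constraining for a given $\theta$ — and this is the arithmetic condition of (1); both implications (coaxial-only $\Rightarrow$ no triangle, and otherwise $\Rightarrow$ an explicit triple of great circles bounding $\triangle$) are supplied by \cite[\S 3]{Eremenko_Mondello_Panov_2023}.

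For (2) and (3) I would run the same coaxial/non-coaxial dichotomy while keeping track of the free edge lengths created by the coincident circles. In (2), the single coincidence at the integral vertex $P_1$ straightens its two edges onto one circle and leaves $L_2$ free; realizing this partially degenerate configuration is possible, by \cite[\S 3]{Eremenko_Mondello_Panov_2023}, exactly when $\theta_1 + \theta_2 - \theta_3$ and $\theta_1 + \theta_3 - \theta_2$ are both odd positive integers, or $\theta_1 - \theta_2 - \theta_3$ is an odd positive integer. The second branch forces $\theta_1 > \theta_2 + \theta_3$, which violates the balanced-triangle inequality $\theta_1 \le \theta_2 + \theta_3$, and so does not occur in our setting; fixing the length of $L_2$ then restores rigidity. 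In (3), two coincidences put all three edges onto a single great circle $C$, whence the angle at the third vertex $P_3$ between the two arcs of $C$ meeting there is a multiple of $\pi$, so $\theta_3 \in \mathbb N$ as well; a parity count of how the developing map runs around $C$ gives $\sum \theta_i$ odd, closing up the edge arcs with positive lengths forces the triangle inequalities (hence balancedness), and $(\theta_1, \theta_2, \theta_3)$ together with the free lengths $L_1, L_2$ pin down $\triangle$. Proposition~\ref{short} is invoked here to keep the free edge lengths below $2\pi$, so that these families parametrize honest balanced triangles and not over-wound ones.

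The step I expect to be the real obstacle is not any single computation but the bookkeeping in the integral-angle cases: deciding precisely which partially or totally coaxial circle configurations actually bound an immersed triangle, discarding the spurious coaxial branch by means of balancedness and Proposition~\ref{short}, and matching the number of surviving free parameters against the edge lengths named in the statement ($L_2$ in case (2), $L_1$ and $L_2$ in case (3)). This is the delicate core of \cite[\S 3]{Eremenko_Mondello_Panov_2023}; rather than reproduce it we cite it and restrict ourselves to the translation sketched above.
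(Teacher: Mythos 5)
The paper offers no proof of this proposition at all --- it is quoted directly from Eremenko--Mondello--Panov \S 3 --- and your sketch is a faithful outline of that reference's argument (developing map onto three great circles, the coaxial/non-coaxial dichotomy, integral angles forcing coincidences among the circles and thereby creating exactly one free edge-length parameter each), deferring the delicate core to the same citation, so the two "proofs" coincide. Incidentally, in part (1) you correctly state the existence criterion with \emph{for every} integer triple of even sum, whereas the proposition as printed says ``for some'' and writes $|\theta_1-n_1|+|\theta_1-n_2|+|\theta_1-n_3|$ with $\theta_1$ repeated --- both apparent typos, since e.g.\ $\theta_1=\theta_2=\theta_3=3/10$ satisfies the printed condition (take $(n_1,n_2,n_3)=(1,1,0)$) yet admits no spherical triangle because the angle sum is below $\pi$.
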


The main operation for the reduction step is the following,

\begin{definition} [Attaching hemispheres] Given a spherical triangle with an edge of length less than $2\pi$, we have a basic operation of attaching a digon of both interior angles $\pi$ (thus has the shape of a hemisphere) to this side. The resulting shape is a spherical triangle with the edge involved becoming the complement of the original edge.
\end{definition}

Notice that attaching a hemisphere on each of the corresponding edges of $\triangle$ and $\triangle'$ results in inserting a full sphere to the corresponding spherical torus. In particular it preserves the projective monodromy  However it changes the sign of ordinary monodromy whenever a path passes the edge involved.

The operation of attaching digons on the sides had appeared in the work of G. Mondello and D. Panov \cite{Mondello_Panov_2016} to show the existence of spherical triangles with given interior angles. In their article they attached digons with integral interior angles to the sides, which is equivalent to consecutively attaching hemispheres to the sides. For the sake of counting, we will use a slightly modified decomposition (into some hemispheres and a basic triangle) as the following.

\begin{definition} [Basic triangles] We call a balanced triangle \textit{basic} if either $n\leq 1$, or $1<n<2$ and it is a complement (w.r.t.~the unit sphere) of some strictly balanced triangle with $n<1$.
\end{definition}


\begin{proposition} Any balanced triangle can be obtained from repeatedly attaching hemispheres on the sides of a basic triangle in a unique way.
\end{proposition}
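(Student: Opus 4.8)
The plan is to translate attaching/detaching a hemisphere into arithmetic on the angle triple and then argue by induction on $n$. Label vertices so that $L_i$ is opposite $P_i$. Attaching a hemisphere to the side $L_i$ replaces $L_i$ by its complementary arc, adds $2\pi$ to the area (so $n\mapsto n+1$), and adds $\pi$ to each of the two angles at the endpoints of $L_i$; concretely $(\theta_1,\theta_2,\theta_3)$ is translated by one of $(0,1,1),(1,0,1),(1,1,0)$, while the remaining two side lengths are unchanged. These three vectors generate the even–sum sublattice $D\subset\Z^3$, and $2n+1=\theta_1+\theta_2+\theta_3$ changes by $\pm 2$ along a move. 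A one–line check shows that attaching a hemisphere sends a balanced triangle to a balanced triangle — the only triangle inequality affected, $\theta_i\le\theta_j+\theta_k$, is merely weakened — whereas detaching need not preserve balancedness; and by the rigidity in Proposition~\ref{p:EMP} a spherical triangle is pinned down by its angle triple together with the length of $L_2$ (resp.\ of $L_1,L_2$) in the one– (resp.\ two–) integer–angle cases, data which a hemisphere attachment on the remaining side leaves fixed. Hence every reduction may be carried out "blindly'' on angles, with the triangle correctly reconstructed at the end.

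Existence proceeds by induction on $\lceil n\rceil$. For $n\le 1$ the triangle is basic and there is nothing to attach. For $n>1$, relabel so that $\theta_1\ge\theta_2\ge\theta_3$. If $\theta_2\le 1$ then $\theta_3\le 1$ and balancedness forces $\theta_1\le\theta_2+\theta_3\le 2$, so $1<n<2$, and a short computation shows $(2-\theta_1,2-\theta_2,2-\theta_3)$ to be a strictly balanced triangle of parameter $2-n<1$ whose complement is our $\triangle$; thus $\triangle$ is basic. Otherwise $\theta_1\ge\theta_2>1$ and we detach a hemisphere from $L_3$: the angles become $\mu=(\theta_1-1,\theta_2-1,\theta_3)$, all positive, and by Proposition~\ref{p:EMP} there is a triangle $\triangle_0$ with angles $\mu$ and the inherited side lengths for which reattaching a hemisphere to $L_3$ recovers $\triangle$. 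When $n\ge\tfrac52$ one has $\theta_3\le\tfrac{2n+1}{3}\le n-\tfrac12$, which makes $\mu$ balanced, so $\triangle_0$ is balanced with $\lceil n-1\rceil<\lceil n\rceil$ and the induction hypothesis applies. The band $1<n<\tfrac52$ — where $\theta_i<3$ for all $i$, so only finitely many integer–part patterns occur — together with the integer–angle degeneracies is then dispatched by a direct finite check against the classification of basic triangles.

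Uniqueness is where the content lies, and the cleanest mechanism is geometric: attaching a hemisphere to a side does not move the point of $S^2$ equidistant from the three vertex images, so the circumcenter of a balanced $\triangle$ is the circumcenter of every triangle in an attachment–tower producing $\triangle$; hence the basic triangle in such a tower must be the minimal member still containing that circumcenter, which determines it, and with it the multiplicities $(a_1,a_2,a_3)$ (read off from the angle differences). Arithmetically: two presentations give $c=(k-a_i),\ c'=(k'-a_i')\in D$ in the shifted positive octant with $c_i\le c_j+c_k$ and $\theta_i-c_i>0$; if the basic triangle were not forced, two basic triangles would have angle triples differing by a nonzero element of $D$, and this is impossible because attaching $\ge 1$ hemisphere to a basic triangle with $n_0\le 1$ gives parameter $>1$, and — if it is to remain basic — exactly one hemisphere is attached and the result is the complement of a triangle which either has a non-positive angle or fails strict balancedness (the latter forcing a reverse triangle inequality among the angles of $T$, contradicting that $T$ is balanced), whereas attaching to a basic triangle with $1<n_0<2$ pushes the parameter past $2$. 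Termination of the detaching process is immediate ($\theta_1+\theta_2+\theta_3$ strictly decreases), and local confluence of the moves — needed to speak of \emph{the} basic triangle independently of the order of reductions — is a short case analysis of how two sides with all endpoint angles exceeding $\pi$ interact.

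The main obstacle is precisely the band $1<n<\tfrac52$ and the integer–angle cases: there the one–step reduction "detach from the side opposite the smallest angle'' need not land on a balanced triangle, so the clean induction stalls and one must instead match $\triangle$ against the finite list of basic triangles, using Proposition~\ref{p:EMP}(2),(3) to track the extra edge–length datum. The second delicate point is making the circumcenter argument rigorous for triangles of large area, where the equidistant locus on $S^2$ is a pair of antipodal points, and for semibalanced triangles, where the circumcenter lies on an edge — these are exactly the situations in which one must be careful that "the basic triangle of $\triangle$'' is well defined.
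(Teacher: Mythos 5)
Your overall strategy is recognizably a reorganization of the paper's: both proofs reduce attaching a hemisphere to adding one of $(0,1,1)$, $(1,0,1)$, $(1,1,0)$ to the angle triple, and both ultimately appeal to the circumcenter picture for uniqueness (the paper does so only in the remark following its proof). The difference is that the paper handles existence in one shot: it writes $\theta_i$ as an integer combination $(B+C,A+C,A+B)$ plus a residual triple $(\phi_1,\phi_2,\phi_3)$ read off from the fractional parts (all $\phi_i$ in $(0,1)$ or all in $(1,2)$), and then checks directly that the residual --- after at most one correction --- is a basic triangle, possibly of complement type. This sidesteps the question of whether intermediate triangles in a peeling sequence are balanced. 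Your induction peels one hemisphere at a time and must confront exactly that question; your observation that detaching opposite the smallest angle preserves balancedness once $n\ge\tfrac52$ (because $\theta_3\le\tfrac{2n+1}{3}\le n-\tfrac12$ is precisely the inequality $\theta_3\le(\theta_1-1)+(\theta_2-1)$) is correct and is the cleanest part of your argument.

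There are, however, two genuine gaps. First, the band $1<n<\tfrac52$ together with the integer-angle cases is not ``a direct finite check'': each integer-part pattern of $(\theta_1,\theta_2,\theta_3)$ is a continuum of triangles, and in this band the single detachment you prescribe can land on an unbalanced triple, so one must either pass to the complement triangle (as the paper does) or invoke the existence criterion of Proposition~\ref{p:EMP} to show that the offending configurations cannot occur; none of this is carried out. Second, your arithmetic uniqueness argument only excludes the case in which one basic triangle is obtained from another by attaching hemispheres, i.e.\ the case where the two candidate basic triples differ by an element of the \emph{positive} cone on $(0,1,1),(1,0,1),(1,1,0)$; two presentations of the same balanced triangle a priori yield basic triples differing by an arbitrary element of the even lattice $D$ (for instance by $(1,-1,0)$), and that case is not addressed. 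The circumcenter argument would close this gap if made rigorous --- which requires pinning down which of the two antipodal equidistant points is ``the'' circumcenter for large-area and semibalanced triangles, as you yourself note --- and this is in effect what the paper's remark supplies. As written, the uniqueness claim is not established.
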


\begin{proof}
Suppose $\theta_i$ are all non-integers. By Proposition \ref{p:EMP}-(1), let $(\theta_1,\theta_2,\theta_3)$ to be the unique spherical triangle with these interior angles. We can write 
\begin{equation*}
\begin{split}
\theta_1 &= B + C + \phi_1, \\
\theta_2 &= A + C + \phi_2, \\
\theta_3 &= A + B + \phi_3,
\end{split}
\end{equation*}
satisfying one of the following two conditions:
\begin{enumerate}[(i)]
\item $0<\phi_i<1$ for all $i$, and $A,B,C$ are nonnegative integers,
\item $1<\phi_i<2$ for all $i$, and $A,B,C$ are nonnegative integers except possibly one of them being $-1$.
\end{enumerate}

For case (i), if $(\phi_1,\phi_2,\phi_3)$ is balanced, then $(\theta_1,\theta_2,\theta_3)$ is obtained from the basic triangle $(\phi_1,\phi_2,\phi_3)$ by adding $A,B,C$ hemispheres to the three edges respectively. If $(\phi_1,\phi_2,\phi_3)$ is unbalanced, say $\phi_1 > \phi_2 + \phi_3$, then we must have $A\geq 1$, and $(\theta_1,\theta_2,\theta_3)$ is obtained by adding $A-1,B,C$ hemispheres to the three edges of $(\phi_1, \phi_2 + 1, \phi_3 + 1)$, which is the complement of the strictly balanced triangle $(2 - \phi_1, 1 -\phi_2, 1 - \phi_3)$, hence is also basic.

For case (ii), $(\phi_1,\phi_2,\phi_3)$ is always balanced but not necessarily basic. If say $(\phi_1,\phi_2-1,\phi_3-1)$ is also balanced, then $(\theta_1,\theta_2,\theta_3)$ is obtained from the basic triangle $(\phi_1,\phi_2-1,\phi_3-1)$ by adding $A+1,B,C$ hemispheres to the three edges respectively. (Note that if say $A=-1$, then $(\phi_1,\phi_2-1,\phi_3-1)$ must be balanced, so it automatically falls into this case.) If none of 
$$
(\phi_1-1,\phi_2-1,\phi_3), \quad (\phi_1-1,\phi_2,\phi_3-1), \quad (\phi_1,\phi_2-1,\phi_3-1)
$$ 
is balanced, then $(\theta_1,\theta_2,\theta_3)$ is obtained by adding $A,B,C$ hemispheres to the three edges of $(\phi_1,\phi_2,\phi_3)$, which is the complement of the strictly balanced triangle $(2-\phi_1,2-\phi_2,2-\phi_3)$, hence is also basic. 

For the case when one or three of the $\theta_i$ are integral, the full description of balanced triangles with one or three integral interior angles in {\cite[Proposition 3.5, 3.8]{Eremenko_Mondello_Panov_2023}} leads to the result. We can write
\begin{equation*}
\begin{split}
\theta_1 &= B + C + 1, \\
\theta_2 &= A + C + \phi, \\
\theta_3 &= A + B + \phi,
\end{split}
\end{equation*}
where $A$, $B$, $C$ are nonnegative integers, $$\dfrac{1}{2}<\phi<1\qquad\textnormal{or}\qquad 1<\phi<\dfrac{3}{2}$$ for the case of one integral angle (assuming $\theta_1$ is integral), and $\phi=1$ for the case of three integral angles. 
The basic triangle would be a spherical triangle of interior angles $\pi$, $\pi\phi$, $\pi\phi$ which forms the shape of a spherical digon or a hemisphere respectively. $(\theta_1,\theta_2,\theta_3)$ is then obtained from this basic triangle by adding $A,B,C$ hemispheres to the three edges.
\end{proof}

\begin{figure}
\includegraphics[scale=0.5]{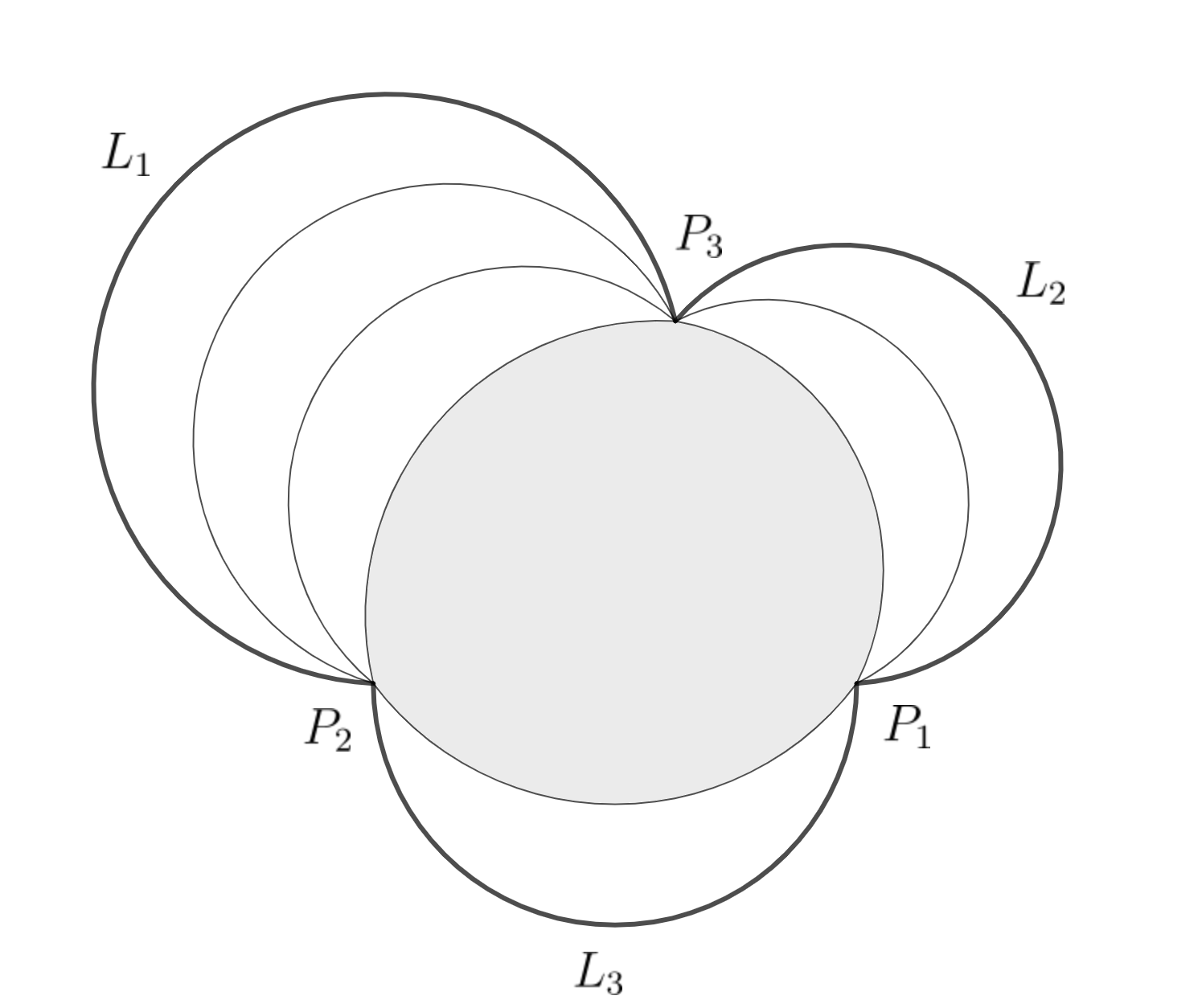}\\
\caption{A demonstration of the decomposition of a balanced spherical triangle into hemispheres and a basic triangle. All the unshaded regions in the figure are hemispheres, and the shaded region is the basic spherical triangle.}
\end{figure}

\begin{remark} 
The above argument for non-integral case is better illustrated by considering the position of the circumcenter of the basic triangle. Note that for any three given points on $S^2$ in general position, there are two choices of circumcenters. With the chosen circumcenter lying on each possible position determined by the extension of the three edges (three great circles), there is a unique basic triangle with this circumcenter. 
See Figure~\ref{Figure:Position_circumcenter}.
\end{remark}

\begin{figure}
\includegraphics[scale=0.1]{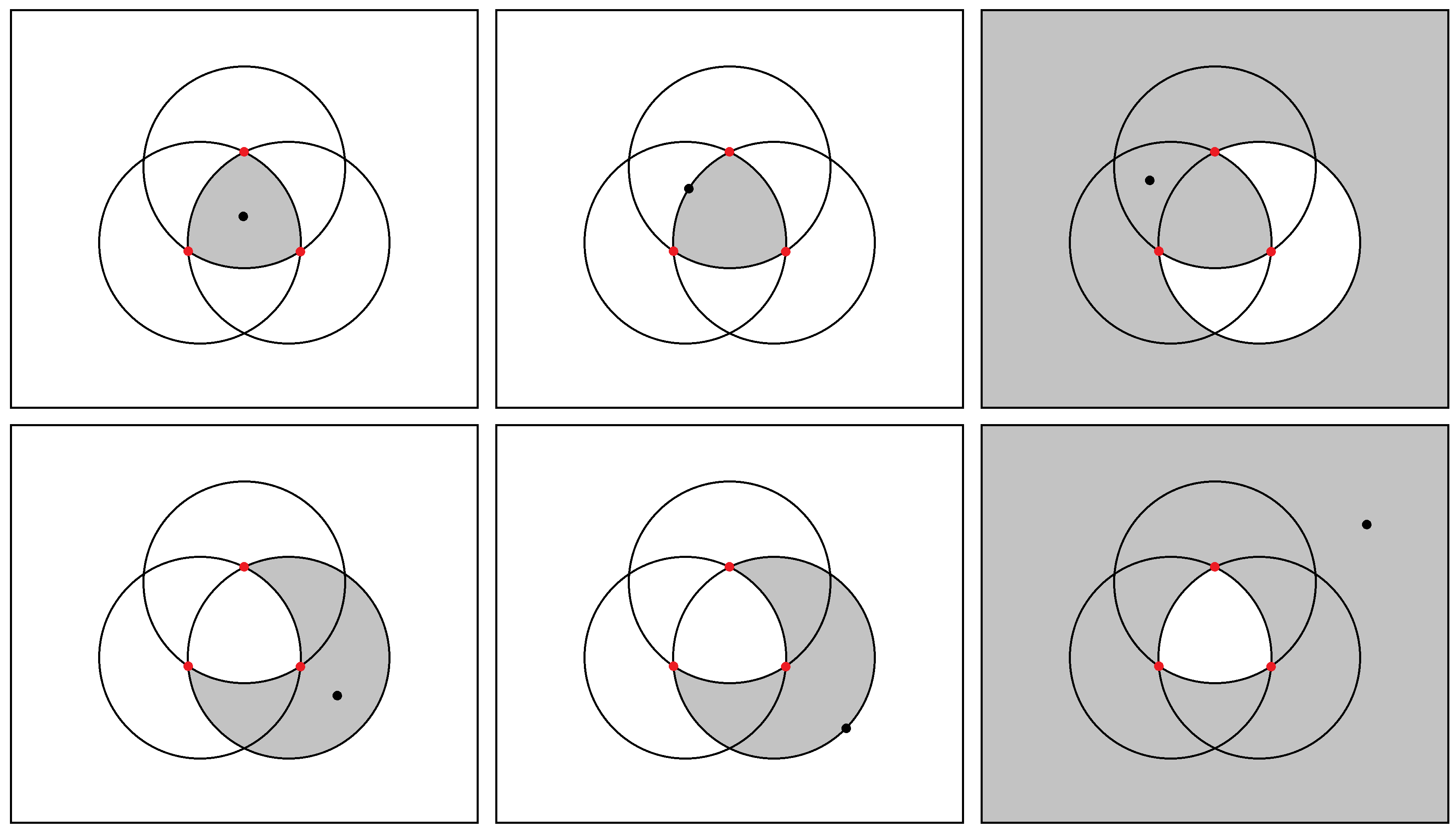}\\
\caption{Demonstration of all possible positions of the circumcenter (black) and its corresponding basic triangle, given three vertices (red) not lying on a great circle.}
\label{Figure:Position_circumcenter}
\end{figure}

Consider a basic triangle $\triangle = P_1P_2P_3$ corresponding to a spherical torus $(T,x)=\triangle \cup \triangle'$ with finite monodromy group. By proposition~\ref{short}, the three vertices of a basic triangle are mapped to three different points on the unit sphere. 

\begin{lemma}\label{Lemma:Klein-four}
Given three lines $l_1$, $l_2$, $l_3$ through the center of $S^2$,
$$
R(l_3)R(l_2)R(l_1) = \mathrm{id}
$$ 
if and only if $l_1$, $l_2$, $l_3$ are mutually orthogonal to each other.
\end{lemma}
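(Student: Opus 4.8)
The statement concerns three reflections $R(l_1), R(l_2), R(l_3)$ in $\mathrm{SO}(3)$, each being rotation by $\pi$ about an axis through the origin, and asks when their product is the identity. The plan is to work directly with these order-two rotations. First I would record the elementary facts: $R(l)$ is an involution, and for two distinct lines $l, m$ the composition $R(m)R(l)$ is a rotation by $2\alpha$ about the axis $l \times m$ (the common perpendicular), where $\alpha$ is the angle between $l$ and $m$. In particular, $R(m)R(l)$ is an involution precisely when $l \perp m$, in which case $R(m)R(l) = R(l)R(m) = R(k)$ where $k$ is the third mutually orthogonal axis; this is the familiar fact that the three $\pi$-rotations about a mutually orthogonal frame, together with the identity, form a copy of the Klein four-group $K_4$ in $\mathrm{SO}(3)$.

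For the ``if'' direction, assume $l_1, l_2, l_3$ are mutually orthogonal. Then $R(l_2)R(l_1) = R(l_3)$ by the computation above, so $R(l_3)R(l_2)R(l_1) = R(l_3)R(l_3) = \mathrm{id}$. For the ``only if'' direction, suppose $R(l_3)R(l_2)R(l_1) = \mathrm{id}$. Rearranging, $R(l_2)R(l_1) = R(l_3)^{-1} = R(l_3)$, which is an involution (indeed of order exactly $2$, since $l_3$ is a genuine axis and $R(l_3) \neq \mathrm{id}$). Hence $R(l_2)R(l_1)$ is an involution, which by the above forces $l_1 \perp l_2$; moreover, the axis of the rotation $R(l_2)R(l_1)$, namely $l_1 \times l_2$, must coincide with $l_3$. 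Therefore $l_3$ is orthogonal to both $l_1$ and $l_2$, and the three lines are mutually orthogonal, as claimed. An alternative to the rotation-composition bookkeeping is to lift to $\mathrm{SU}(2)$: $R(l_i)$ lifts (up to sign) to a purely imaginary unit quaternion $u_i$ with $u_i^2 = -1$, the relation becomes $u_3 u_2 u_1 = \pm 1$, i.e. $u_2 u_1 = \mp u_3^{-1} = \pm u_3$, and taking the scalar part shows $u_1 \perp u_2$ as vectors in $\mathbb{R}^3$; symmetry (or reading off the vector part) then gives all three orthogonality relations.

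I do not expect a serious obstacle here — the result is a standard piece of the geometry of $\mathrm{SO}(3)$ and $K_4$. The only point requiring a little care is making sure the degenerate cases are handled: one should note that if two of the $l_i$ coincide the product reduces to a single reflection, which is never the identity, so the hypothesis already rules this out; and one must use that each $l_i$ is a true line (so $R(l_i)$ has order exactly $2$, not $1$) in order to conclude that $R(l_2)R(l_1)$ has order exactly $2$ rather than being trivial. With those caveats noted, the proof is a two or three line computation in either $\mathrm{SO}(3)$ or $\mathrm{SU}(2)$.
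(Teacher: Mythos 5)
Your proof is correct and complete; in fact it covers both directions, whereas the paper only writes out the ``only if'' direction and leaves the converse implicit. The two arguments share the first step (rewriting the relation as $R(l_2)R(l_1)=R(l_3)$, resp.\ $R(l_3)R(l_2)=R(l_1)$, and noting that no two axes can coincide), but diverge in how they extract orthogonality. You invoke the composition formula: for distinct axes $l,m$ at angle $\alpha$, the product $R(m)R(l)$ is the rotation by $2\alpha$ about the common perpendicular $l\times m$; the involution condition then forces $\alpha=\pi/2$ and identifies $l_3$ with $l_1\times l_2$, yielding all three orthogonality relations in one stroke (the quaternion computation you sketch is the same argument in $\mathrm{SU}(2)$ clothing). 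The paper instead avoids this formula entirely: from $R(l_3)R(l_2)=R(l_1)$ it considers the line $l_2':=R(l_1)l_2=R(l_3)l_2$ and argues by cases --- if $l_2'\neq l_2$ then $l_1$ and $l_3$ would be the two angle bisectors of the pair $l_2,l_2'$, which leads to a contradiction, so $l_2$ must be fixed by both $R(l_1)$ and $R(l_3)$ and hence orthogonal to $l_1$ and $l_3$ --- and then repeats the argument after cyclically permuting the relation to get the remaining orthogonality. Your route is shorter and more uniform at the cost of quoting (or proving) the composition formula; the paper's stays at the level of elementary incidence geometry of lines and fixed sets. Your care with the degenerate cases mirrors the paper's observation that $l_1\neq l_3$, so nothing is missing.
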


\begin{proof} 

Suppose that $R(l_3)R(l_2)R(l_1) = \mathrm{id}$, then $R(l_3) R(l_2) = R(l_1)$ and then 
$$
l_2' := R(l_1) l_2 = R(l_3) R(l_2) l_2 = R(l_3) l_2.
$$ 
Also $l_1 \ne l_3$, as $l_1 = l_3$ implies $R(l_2) = \mathrm{id}$, which is a contradiction. 

Suppose $l_2'\neq l_2$, then $l_1$ and $l_3$ are orthogonal, since they will be the two angle bisectors of $l_2$ and $l_2'$. But then $R(l_2)=R(l_3)R(l_1)$ shows that $l_2$ is orthogonal to both $l_1$ and $l_3$, contradict to that $l_2'\neq l_2$. This shows that $l_2$ must be fixed by $R(l_1)$ and $R(l_3)$, and thus $l_1,l_3$ must be orthogonal to $l_2$. By applying the same argument to 
$$
R(l_2)R(l_1)R(l_3)=\mathrm{id}
$$ 
we obtain the result.
\end{proof}

\begin{definition} \label{d:VEF}
We define the three sets $\V$, $\E$, $\F$ on $S^2$ by\\
$\V$: the set of vertices of a regular $n$-gon on a great circle, or a platonic solid.\\
$\E$: the set of midpoints of the edges of the shape above.\\
$\F$: the set of centers of the faces of the shape above (for regular $n$-gons this means the two poles of rotational axis of the great circle).\\
\end{definition}


\begin{proposition} \label{p:keyP}
For $n\not\in\Z+\frac{1}{2}$, $P_1$, $P_2$, $P_3$ must lie on some choice of $\V$, i.e.~the vertices of either a regular $n$-gon on a great circle or a platonic solid.
\end{proposition}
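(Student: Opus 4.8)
The plan is to exploit the rigidity of the finite projective monodromy group $P\widetilde{M}=\langle R(Q_1),R(Q_2),R(Q_3)\rangle\subset\mathrm{SO}(3)$ together with the relation among the three axial reflections. First I would recall that, since the six vertices of $\triangle$ and $\triangle'$ are all identified to the single conical point $x$ of the spherical torus, going once around $x$ produces the relation $\gamma_3\gamma_2\gamma_1\cdot(\text{local monodromy at }\infty)=\mathrm{id}$ on the algebraic side; projectively, and using that $R(Q_i)$ is the reflection at the midpoint of $L_i$, this says that the product $R(Q_3)R(Q_2)R(Q_1)$ equals the projective local monodromy at $\infty$, which is a rotation by $2\pi n$ about the axis through $f(x)$. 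The key point is that for $n\not\in\tfrac12+\Z$ this rotation is \emph{not} a reflection (order $2$), so we are \emph{not} in the Klein-four situation of Lemma~\ref{Lemma:Klein-four}; rather, $R(Q_3)R(Q_2)R(Q_1)$ is conjugate to a nontrivial rotation fixing the two points $\pm f(x)\in S^2$. Both $f(x)$ and $-f(x)$ are therefore fixed by the element $g:=R(Q_3)R(Q_2)R(Q_1)\in P\widetilde{M}$.

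Next I would use the classification of finite subgroups of $\mathrm{SO}(3)$: $P\widetilde{M}$ is cyclic, dihedral, tetrahedral, octahedral, or icosahedral, and in each case the fixed point set of any nontrivial element is a pair of antipodal points in $\V\cup\E\cup\F$ in the sense of Definition~\ref{d:VEF} (poles of the cyclic axis, or axes through vertices/edge-midpoints/face-centers of the Platonic solid, or the rotation axis of the regular $n$-gon). So $\pm f(x)$ lies in this distinguished finite set. Now I want to locate the $P_i$ (equivalently the $Q_i$) relative to this structure. Since $R(Q_i)\in P\widetilde{M}$, each $Q_i$ is a pole of a nontrivial element of $P\widetilde{M}$, hence $Q_i\in\V\cup\E\cup\F$; and $f(x)$ is obtained from each $P_i$ by moving along two edges of $\triangle$ whose lengths are governed by Proposition~\ref{p:EMP}. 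The edge $L_i$ of $\triangle$ has $Q_i$ as midpoint and two endpoints among $P_1,P_2,P_3$; the reflection $R(Q_i)$ swaps those two endpoints. Thus $P_1,P_2,P_3$ form a single set on which the three involutions $R(Q_1),R(Q_2),R(Q_3)$ act, and this set is contained in one orbit of $P\widetilde{M}$ (since any two of the $P_i$ are exchanged by some product of the generators). An orbit of a finite subgroup of $\mathrm{SO}(3)$ on which enough reflections act transitively on pairs is, up to the allowed choices, exactly $\V$ — this is where I would invoke the geometry of spherical triangles: a basic triangle has all three sides $<2\pi$ (Proposition~\ref{short}), so the three vertices are genuinely distinct points, and the combinatorics of which reflections fix which of the distinguished points (together with the area constraint $\theta_1+\theta_2+\theta_3=2n+1$) rules out the cases where $\{P_1,P_2,P_3\}$ would lie in $\E$ or $\F$ rather than $\V$.

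Concretely the steps are: (1) derive the relation $R(Q_3)R(Q_2)R(Q_1)=$ (rotation by $2\pi n$ about $f(x)$) from the monodromy around $x$; (2) note that for $n\notin\tfrac12+\Z$ this element is not an involution, so $f(x)$ and $-f(x)$ are a genuine antipodal pair of fixed points of a nontrivial element of $P\widetilde{M}$, hence in $\V\cup\E\cup\F$; (3) observe $Q_i\in\V\cup\E\cup\F$ as poles of elements of $P\widetilde{M}$; (4) use that $R(Q_i)$ swaps the two endpoints of $L_i$, so $\{P_1,P_2,P_3\}$ is stable under $P\widetilde{M}$ (it generates the group) and in fact a single orbit; (5) go through the five types of finite subgroups and check, using the fixed-point combinatorics plus Proposition~\ref{p:EMP} and Proposition~\ref{short}, that the only orbit which can serve as the vertex set of such a basic triangle is $\V$. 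I expect step (5) — the case-by-case elimination showing $\{P_1,P_2,P_3\}\subset\V$ rather than $\E$ or $\F$, and in particular handling the dihedral/cyclic cases where $\V$ is the regular $n$-gon on a great circle — to be the main obstacle, since one must carefully track which reflections can appear as $R(Q_i)$ and reconcile this with the angle/area data of the basic triangle. The tetrahedral, octahedral, and icosahedral cases should be more uniform because there the three types of special axes have different orders, so the fact that $R(Q_i)$ has order $2$ and the product has order $>2$ pins things down quickly.
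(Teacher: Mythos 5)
Your steps (1)--(4) essentially reproduce the first half of the paper's argument: since $R(Q_i)$ swaps the two endpoints of $L_i$, the composition $R(Q_3)R(Q_2)R(Q_1)$ fixes $P_2$ (and its cyclic variants fix $P_1$ and $P_3$), so once this composition is a \emph{nontrivial} rotation, each $P_i$ is a pole of a nontrivial element of the finite group $P\widetilde{M}$ and hence lies in $\V\cup\E\cup\F$. Two caveats on your version of this half. First, identifying the product with ``rotation by $2\pi n$ about $f(x)$'' and arguing from the order of that rotation is the wrong dichotomy: taken literally it would make the product trivial for $n\in\Z$, and your argument would then say nothing in the dihedral case, which the proposition covers. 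The paper instead shows the product is the identity only when the three axes are mutually orthogonal (Lemma~\ref{Lemma:Klein-four}), i.e.\ only in the Klein-four case $n\in\Z+\tfrac12$, which is precisely what is excluded; for $n=1$, for instance, the product is a half-turn, not the identity. Second, $\{P_1,P_2,P_3\}$ is contained in a single $P\widetilde{M}$-orbit but is \emph{not} stable under $P\widetilde{M}$ (e.g.\ $R(Q_1)P_1$ need not be a vertex).

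The genuine gap is your step (5), which you explicitly leave as the ``main obstacle'': you never rule out $\{P_1,P_2,P_3\}\subset\E$, and this exclusion is the substantive content of the proposition beyond the fixed-point observation. Invoking ``fixed-point combinatorics plus Propositions~\ref{p:EMP} and~\ref{short}'' does not obviously accomplish it. The paper's argument is an area-quantization: after reducing $\F$ to $\V$ by passing to the dual solid, if the $P_i$ lay in $\E$ for, say, the icosahedron, then $\E$ is the vertex set of an icosidodecahedron, whose edges cut the sphere into $12$ regular pentagons and $20$ regular triangles; the spherical torus, being assembled from copies of these tiles, would have area a fixed rational multiple of $4\pi$, forcing $n\in\tfrac12\Z$ --- impossible for a Platonic (non-dihedral) monodromy group once $n\notin\Z+\tfrac12$ is assumed. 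The octahedral and cubical cases are analogous, and $\E$ of a tetrahedron is already an octahedron. Without an argument of this kind the proof is incomplete.
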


\begin{proof}
It is well known that any finite subgroup of $\mathrm{SO}(3)$ fixes some finite set $\V$ as in Definition \ref{d:VEF}. Consider the composition $$R(Q_3)R(Q_2)R(Q_1)\in \mathrm{SO}(3).$$ Notice that $$R(
Q_3)R(Q_2)R(Q_1)P_2=P_2,$$ so $P_2$ must lies on the rotational axis of $R(Q_3)R(Q_2)R(Q_1)$ if the composition is not identity. 

The cases where $$R(Q_3)R(Q_2)R(Q_1)=\mathrm{id}$$ would have Klein-four monodromy group by Lemma \ref{Lemma:Klein-four}, and correspond to the case $n\in \Z+\frac{1}{2}$. It is not hard to argue that any axial rotations fixing $\V$ must have axes passing through $\V\cap \E\cap \F$, so it turns out that we only have to rule out the case $P_i\in \E$ for platonic solid cases (If $P_i\in \F$ for all $i$, then replace the solid $\V$ by the dual solid $\F$).

For the case of icosahedron or dodecahedron, note that $\E$ forms the set of vertices of a icosidodecahedron, which consists of $12$ regular pentagons and $20$ regular triangles. As there is a covering of the spherical torus which is also the covering of the sphere, the spherical torus should consists of $3k$ spherical regular pentagons and $5k$ spherical regular triangles for some integer $k$. Thus the area of spherical torus is a multiple of $4\pi/4$ and thus $n\in \frac{1}{2}\Z$. The octahedral and cubical cases are similar, and the set $\E$ for a tetrahedron is actually an octahedron. The proof is complete.
\end{proof}

\begin{definition}\label{Def:Q} Denote by 
$$
\Q = \{ p \in \V\cup \E\cup \F \mid R(p)\V = \V \}.
$$ 
In explicit terms, $\Q$ is given by
\begin{itemize}
\item $\V\cup \E$ for the regular $n$-gonal case when $n$ is odd,
\item $\V\cup \E\cup \F$ for the regular $n$-gonal case when $n$ is even,
\item $\V\cup \E$ for the octahedral case,
\item $\E\cup \F$ for the cubical case,
\item $\E$ for the icosahedral or dodecahedral cases.
\end{itemize}
\end{definition}





We can find all basic triangles with finite monodromy for $n\leq 1$ by exhaustion. We sketch our procedure for determining the table:

By Proposition \ref{p:keyP}, it boils down to determine all $3$-subsets 
$$
\{P_1, P_2, P_3\} \subset \V
$$ 
and the edges $L_i$ between them, such that the midpoints of the edges $Q_i$ lie in $\Q$. The positions of $P_1$, $P_2$ and $P_3$ fall into one of the following three cases.

\begin{enumerate}
\item If the chosen three points on the unit sphere are not lying on the same great circle, then there are four spherical triangles whose vertices maps to the three given points and with $n<1$, illustrated in Figure \ref{small_triangle_fig}. Either one of them is balanced, or two of them are semibalanced, depending on whether the circumcenter of $P_1,P_2,P_3$ lies on a great circle passing any two points.

\begin{figure}
\includegraphics[scale=0.2]{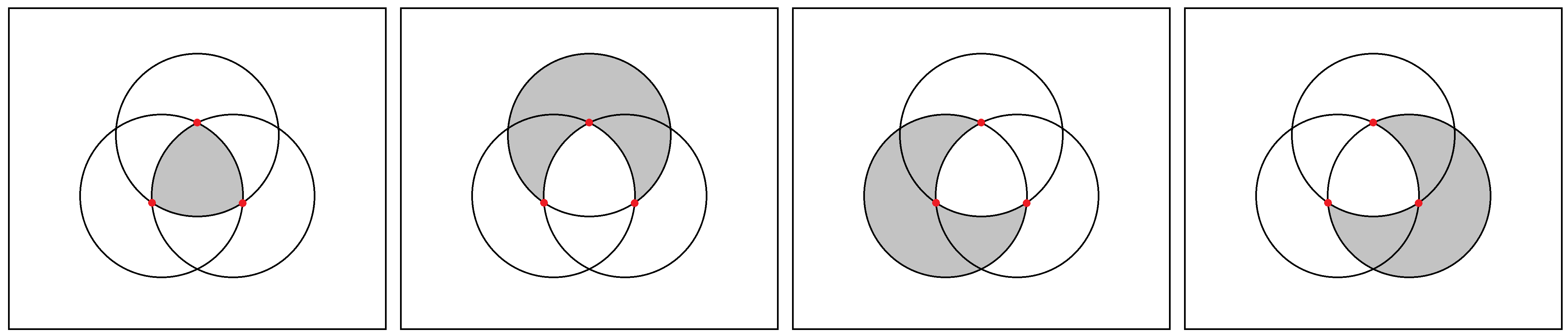}\\
\caption{The shaded regions are all spherical triangles of $n<1$ with three given vertices on $S^2$.}
\label{small_triangle_fig}
\end{figure}

\item If two of the chosen points are antipodal to each other, then the basic triangle must form the shape of a spherical digon. There is a choice for the edge of length $2\pi$. However the midpoint of this edge must lies in $\Q$, so there is a finite number of choices of this edge. Also, with the balancing condition, the angle of the digon should be no less than $\pi/2$. If the angle equals to $\pi/2$, then $n=1/2$ and it belongs to the Klein-four case.

\item If the three chosen points lie on the same great circle and have no antipodal pair of points, then the basic triangle must form the shape of a hemisphere, hence $n=1$ in this case. The three edges should have lengths being some rational multiples of $\pi$ in order to have finite monodromy.
\end{enumerate}

We exhaust all triples of distances for each case in Table \ref{Table:Sph_triangle}. Here "distance" means the graph distance between each two vertices. We use $'$ to indicate that the corresponding edge of the spherical torus is chosen to be the superior arc rather than the inferior one. Note that for the dodecahedral case, we can exclude those triangles with some edges of graph distance $2$, because the midpoints of such edges must not lie in $\Q$.

By appending the complements of all strictly balanced triangles of $n<1$ into the list, we obtain the full list of basic triangles.

\begin{table}[h]
\begin{tabular}{|l|l|l|l|}
\hline
polyhedron   & $n$    & distance                & note\\ \hline
octahedral   & $1/4, 7/4$  & $1,1,1$                 & regular \\ \hline
             & $3/4, 5/4$  & $1,1,2$                 & \\ \hline
cubical      & $1/6$ & $1,1,2$                 & semibalanced \\ \hline
             & $5/6$ & $1,1,2'$                & semibalanced \\ \hline
             & $5/6, 7/6$  & $1,2,3$                 & \\ \hline
             & $5/6, 7/6$  & $1,3,2$                 & \\ \hline
icosahedral  & $1/10, 19/10$ & $1,1,1$                 & regular \\ \hline
             & $3/10, 17/10$ & $1,2,2$                 & \\ \hline
             & $7/10, 13/10$ & $1,2,3$                 & \\ \hline
             & $7/10, 13/10$ & $1,3,2$                 & \\ \hline
             & $7/10, 13/10$ & $2,2,2$                 & regular \\ \hline
             & $9/10, 11/10$ & $1,1,2'$                & \\ \hline
             & $9/10, 11/10$ & $1,2,3$                 & \\ \hline
             & $9/10, 11/10$ & $1,3,2$                 & \\ \hline
dodecahedral & $1/6, 11/6$ & $1,3,3$                 & \\ \hline
             & $5/6, 7/6$ & $1,3,4'$                & \\ \hline
             & $5/6, 7/6$ & $1,4',3$                & \\ \hline
             & $5/6, 7/6$ & $1,4,5$                 & \\ \hline
             & $5/6, 7/6$ & $1,5,4$                 & \\ \hline
             & $5/6, 7/6$ & $3,3,4$                 & \\ \hline
dihedral     & $1$    & $k_1,k_2,k_3$ (coprime) & \\ \hline
Klein-four   & $1/2, 3/2$ & -                       & \\ \hline
\end{tabular}
\caption{Full list of basic spherical triangles.}
\label{Table:Sph_triangle}
\end{table}

\medskip

We proceed to determine the number of spherical tori of each given type. Note that to increase the value of $n$ by $m-1$, where $m\in\Z_{>0}$, we add $m_1$, $m_2$, $m_3$ hemispheres to the three edges of the basic triangle such that 
$$
m_1 + m_2 + m_3 = m - 1.
$$ 
Hence, there are $m(m+1)/2$ such balanced triangles, without removing repetitions. For semibalanced triangles, the mirrored pair of triangle should be counted as the same. Also, for basic triangles with $3$-fold symmetry (regular), cyclic permutations on the vertices should be counted as the same. We can then enumerate such spherical tori as shown in Table $\ref{Table:Counting_sphe_tri}$.

\begin{table}[h]
\begin{tabular}{|l|l|l|l|}
\hline
case         & $n$                                                          & distance                                                          & \#                                                                                                         \\ \hline
octahedral   & $m\pm 3/4$                                                   & $1,1,1$                                                           & $\lceil m(m+1)/6\rceil$                                                                                    \\ \hline
             & $m\pm 1/4$                                                   & $1,1,2$                                                           & $m(m+1)/2$                                                                                                 \\ \hline
cubical      & $m-5/6$                                                      & $1,1,2$                                                           & $m(m+1)/2-\lfloor m/2\rfloor$                                                                              \\ \hline
             & $m-1/6$                                                      & $1,1,2'$                                                          & $m(m+1)/2-\lfloor m/2\rfloor$                                                                              \\ \hline
             & $m\pm 1/6$                                                   & $1,2,3$                                                           & $m(m+1)/2$                                                                                                 \\ \hline
             & $m\pm 1/6$                                                   & $1,3,2$                                                           & $m(m+1)/2$                                                                                                 \\ \hline
icosahedral  & $m\pm 9/10$                                                  & $1,1,1$                                                           & $\lceil m(m+1)/6\rceil$                                                                                    \\ \hline
             & $m\pm 7/10$                                                  & $1,2,2$                                                           & $m(m+1)/2$                                                                                                 \\ \hline
             & $m\pm 3/10$                                                  & $1,2,3$                                                           & $m(m+1)/2$                                                                                                 \\ \hline
             & $m\pm 3/10$                                                  & $1,3,2$                                                           & $m(m+1)/2$                                                                                                 \\ \hline
             & $m\pm 3/10$                                                  & $2,2,2$                                                           & $\lceil m(m+1)/6\rceil$                                                                                                 \\ \hline
             & $m\pm 1/10$                                                  & $1,1,2'$                                                          & $m(m+1)/2$                                                                                                 \\ \hline
             & $m\pm 1/10$                                                  & $1,2,3$                                                           & $m(m+1)/2$                                                                                                 \\ \hline
             & $m\pm 1/10$                                                  & $1,3,2$                                                           & $m(m+1)/2$                                                                                                 \\ \hline
dodecahedral & $m\pm 5/6$                                                   & $1,3,3$                                                           & $m(m+1)/2$                                                                                                 \\ \hline
             & $m\pm 1/6$                                                   & $1,3,4'$                                                          & $m(m+1)/2$                                                                                                 \\ \hline
             & $m\pm 1/6$                                                   & $1,4',3$                                                          & $m(m+1)/2$                                                                                                 \\ \hline
             & $m\pm 1/6$                                                   & $1,4,5$                                                           & $m(m+1)/2$                                                                                                 \\ \hline
             & $m\pm 1/6$                                                   & $1,5,4$                                                           & $m(m+1)/2$                                                                                                 \\ \hline
             & $m\pm 1/6$                                                   & $3,3,4$                                                           & $m(m+1)/2$                                                                                                 \\ \hline
dihedral     & $m$                                                          & \begin{tabular}[c]{@{}l@{}}$k_1,k_2,k_3$\\ (coprime)\end{tabular} & \begin{tabular}[c]{@{}l@{}}$\lceil m(m+1)/6\rceil$ if $k_1=k_2=k_3=1$,\\ $m(m+1)/2$ otherwise\end{tabular} \\ \hline
Klein-four   & $m\pm 1/2$ & -                                                                 & Complex 1-dimensional                                                                          \\ \hline
\end{tabular}
\caption{Counting formulae for spherical triangles.}
\label{Table:Counting_sphe_tri}
\end{table}

\begin{remark}
For $n \in \frac{1}{2} + \Bbb Z_{\ge 0}$, the moduli space of the spherical tori is real $2$-dimensional with a complex structure defined by the Brioschi--Halphen polynomial as mentioned in Theorem \ref{t:BHC}. (See also \cite[\S 5]{Eremenko_Mondello_Panov_2023}).
\end{remark}
\medskip

Finally we can determine all the four types of monodromy groups from the constructions. The results are shown in Table \ref{Table:List_mono_gps} where the following notations are used: 
\begin{itemize}
\item
$G_i$ is the complex reflection group of Shephard–Todd number $i$.
\item
$G_i^+$ is the index $2$ normal subgroup of $G_i$ consisting of elements with determinant $1$. 
\item
In the dihedral case, $k_1$, $k_2$ and $k_3$ sum up to $k$, and $\widetilde M = D_k$ or $D_{2k}$ depending on whether it contains $-I\in \mathrm{U}(2)$. 
\item
In the Klein-four $K_4$ case, $\mathcal P_1=G(4,2,2)$ is the Pauli group.
\end{itemize}

\begin{table}[h]
\begin{tabular}{|l|ll|ll|}
\hline
case         & \multicolumn{2}{l|}{$E$}                         & \multicolumn{2}{l|}{$\mathbb {CP}^1$}             \\ \hline
             & \multicolumn{1}{l|}{$M$}                   & $PM$    & \multicolumn{1}{l|}{$\widetilde{M}$}                 & $P\widetilde{M}$    \\ \hline
octahedral   & \multicolumn{1}{l|}{$G_{12}^+$}          & $A_4$ & \multicolumn{1}{l|}{$G_{12}$}          & $S_4$ \\ \hline
cubical      & \multicolumn{1}{l|}{$G_{13}^+$}          & $S_4$ & \multicolumn{1}{l|}{$G_{13}$}          & $S_4$ \\ \hline
icosahedral  & \multicolumn{1}{l|}{$G_{22}^+$}          & $A_5$ & \multicolumn{1}{l|}{$G_{22}$}          & $A_5$ \\ \hline
dodecahedral & \multicolumn{1}{l|}{\textbf{$G_{22}^+$}} & $A_5$ & \multicolumn{1}{l|}{$G_{22}$}          & $A_5$ \\ \hline
dihedral     & \multicolumn{1}{l|}{$C_k$ or $C_{2k}$}   & $C_k$ & \multicolumn{1}{l|}{$D_k$ or $D_{2k}$} & $D_k$ \\ \hline
Klein-four   & \multicolumn{1}{l|}{$Q_8$}          & $K_4$ & \multicolumn{1}{l|}{$\mathcal P_1$}        & $K_4$ \\ \hline
\end{tabular}
\caption{Full list of all four types of monodromy groups}
\label{Table:List_mono_gps}
\end{table}

\subsection{Dahmen's formulas} \label{ss:DF}

As an application, we provide simple proofs of the counting formulas (Theorem \ref{thm_alg_dahmen}, \ref{thm_alg_dahmen_II}) discovered by S. Dahmen.




We first establish relations between edge lengths and monodromies. For $n\in \mathbb N$, we have seen that a (balanced) spherical triangle $\triangle$ is constructed from a basic triangle of the shape a hemisphere, by repeatedly attaching hemispheres to its edges.

The previous results in \cite{Lin_Wang_2017, Beukers_Waall_2004} already showed that for $n\in\mathbb N$ Lam\'e equation with unitary monodromy has ordinary monodromy of the form
\begin{equation}\label{nonproj_mono_eq}
\begin{cases}
w_{\pm}(z + \omega_1) = e^{\mp 2i\pi s} w_{\pm}(z), \\ 
w_{\pm}(z + \omega_2) = e^{\pm 2i\pi t} w_{\pm}(z),
\end{cases}
\end{equation}
while the projective monodromy has the form
\begin{equation}\label{proj_mono_eq}
\begin{cases}
f(z + \omega_1) = e^{-4i\pi s} f(z), \\ 
f(z + \omega_2) = e^{4i\pi t} f(z).
\end{cases}
\end{equation}
where $f=w_+/w_-$ is the ratio of the two solutions of the Lam\'e equation. Thus the pair $s, t \pmod 1$ determines the monodromy of equation \eqref{eqn_Lame_ellipic}, while the pair $2s, 2t \pmod 1$ determines the projective monodromy. We can also derive this result using spherical geometry, and the domain of possible $s,t\in \mathbb R$ will be determined in the next proposition.

\begin{proposition} 
For fixed parameters $\theta_1, \theta_2, \theta_3\in\{1,2,\dots,n\}$ with $\theta = \theta_1 + \theta_2 + \theta_3 = 2n + 1$, the monodromy parameters $s, t \pmod 1$ for the solutions $w_{\pm}$ satisfy either 
\[
s < \frac{\theta_1}{2}, \quad t < \frac{\theta_2}{2}, \quad s + t > \frac{\theta_1 + \theta_2 - 1}{2},
\] 
or 
\[
s > -\frac{\theta_1}{2}, \quad t > -\frac{\theta_2}{2}, \quad s + t < -\frac{\theta_1 + \theta_2 - 1}{2},
\] 
for some choice of representatives $s, t\in \mathbb R$.
\end{proposition}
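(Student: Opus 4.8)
The plan is to make the hemisphere decomposition of the torus explicit and then track the lifts of the reflections $R(Q_i)$ from $\mathrm{SO}(3)$ to $\mathrm{SU}(2)$. Since $n\in\mathbb N$ and the $\theta_i$ are integers, the decomposition established above (a balanced triangle with three integral angles is built from a \emph{hemisphere} by attaching hemispheres to its edges) gives $T=\triangle\cup\triangle'$ with $\triangle$ obtained from a basic hemisphere by attaching $m_1,m_2,m_3\ge 0$ hemispheres to the three edges, where $m_1+m_2+m_3=n-1$ and
$$
\theta_1=m_2+m_3+1,\qquad \theta_2=m_1+m_3+1,\qquad \theta_3=m_1+m_2+1 .
$$
I would take the great circle carrying the basic hemisphere as the equator of $S^2$, with the polar axis the rotation axis of the cyclic group $PM$, and write the three arcs of the basic hemisphere as having lengths $2\pi\ell_1,2\pi\ell_2,2\pi\ell_3$ with $\ell_i>0$ and $\ell_1+\ell_2+\ell_3=1$; by Proposition~\ref{p:EMP}(3) the pair $(\ell_1,\ell_2)$ is exactly the modulus of the resulting torus for the fixed integral $\theta_i$, so in particular $\ell_1,\ell_2,\ell_3>0$. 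Parametrizing the midpoints $Q_1,Q_2,Q_3$ by their angular positions on the equator, the angular gaps relevant to the monodromy are $\pi(1-\ell_2)$ between $Q_1,Q_3$ and $\pi(1-\ell_1)$ between $Q_2,Q_3$.

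The heart of the proof is the computation of the two period monodromies of \eqref{eqn_Lame_ellipic}. Projectively this is easy: $R(Q_i)R(Q_j)$ is a rotation about the polar axis through twice the angular gap, and comparing with \eqref{proj_mono_eq} (matching $\omega_1$ with the period that sees the angle at $P_1$ and $\omega_2$ with the one that sees $P_2$) yields $2s\equiv-\ell_1$ and $2t\equiv-\ell_2\pmod1$. To promote this to the ordinary monodromy — which is what makes $\theta_1,\theta_2$ themselves, not merely their fractional parts, appear — I would use $\gamma_i=i\,\widetilde{R(Q_i)}\in\mathrm{U}(2)$ from \S\ref{ss:basic}: for a point at equatorial angle $\alpha$ the involution $\gamma_i$ is antidiagonal with entries $-e^{-i\alpha},-e^{i\alpha}$. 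The decisive point is that attaching a hemisphere to $L_i$ moves $Q_i$ to its antipode and hence multiplies $\widetilde{R(Q_i)}$, and so $\gamma_i$, by $-1$; attaching $m_i$ of them therefore contributes $(-1)^{m_i}$. Computing the diagonal matrices $\gamma_2\gamma_3$ and $\gamma_1\gamma_3$ (which generate $M$) with these sign corrections, and using $m_2+m_3=\theta_1-1$, $m_1+m_3=\theta_2-1$ to absorb the signs into $\theta_1,\theta_2$, matching \eqref{nonproj_mono_eq} gives, for a suitable ordering of the basis $\{w_+,w_-\}$ and a suitable choice of real representatives,
$$
s=\frac{\theta_1-\ell_1}{2},\qquad t=\frac{\theta_2-\ell_2}{2} .
$$

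From here the conclusion is immediate: $\ell_1=\theta_1-2s$, $\ell_2=\theta_2-2t$, and $\ell_3=1-\ell_1-\ell_2=1-\theta_1-\theta_2+2s+2t$, so the defining conditions $\ell_1,\ell_2,\ell_3>0$ of a genuine non-degenerate basic hemisphere become precisely $s<\tfrac{\theta_1}{2}$, $t<\tfrac{\theta_2}{2}$, and $s+t>\tfrac{\theta_1+\theta_2-1}{2}$. Relabeling $w_+\leftrightarrow w_-$ replaces $(s,t)$ by $(-s,-t)$, i.e.\ yields the representatives $s=\tfrac{\ell_1-\theta_1}{2}$, $t=\tfrac{\ell_2-\theta_2}{2}$, and the same three conditions on the $\ell_i$ now read $s>-\tfrac{\theta_1}{2}$, $t>-\tfrac{\theta_2}{2}$, $s+t<-\tfrac{\theta_1+\theta_2-1}{2}$; since the $\ell_i$ of the actual torus are positive, one of the two alternatives holds.

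The step I expect to be the real obstacle is the sign bookkeeping in the second-to-last paragraph: getting consistent the factor $(-1)^{m_i}$ from attaching hemispheres to $L_i$, the identification of the abstract period loops $\omega_1,\omega_2$ with the products $\gamma_2\gamma_3,\gamma_1\gamma_3$ coming from how $\triangle$ and $\triangle'$ are glued, and the labeling of $w_\pm$ against the two diagonal entries — this is exactly the data invisible in $PM$, and pinning it down is the whole content. Everything else (the hemisphere decomposition, the $\ell_i$-parametrization, and the final conversion of the simplex inequalities $\ell_i>0$) is routine.
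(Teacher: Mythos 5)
Your proposal is correct and follows essentially the same route as the paper: reduce to the basic hemisphere with normalized edge lengths $\ell_1,\ell_2,\ell_3$, show that each hemisphere attachment to $L_2$ or $L_3$ shifts $s$ by $\tfrac12$ (so that $s=\tfrac{\theta_1-\ell_1}{2}$, $t=\tfrac{\theta_2-\ell_2}{2}$, matching the paper's $s=(\ell_2+\ell_3)/4\pi$ at $n=1$ plus the translation by $\tfrac{\theta_1-1}{2}$), and translate $\ell_i>0$ into the stated inequalities. The only cosmetic differences are that you obtain the $\tfrac12$-shift by tracking the sign of $\gamma_i=i\widetilde{R(Q_i)}$ under $Q_i\mapsto -Q_i$ rather than by the paper's developing-path picture, and you produce the second alternative by relabeling $w_\pm$ rather than by sending the center of $\triangle$ to $\infty$ — these are equivalent.
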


\begin{proof} We first consider the case $n=1$ and $\theta_1=\theta_2=\theta_3=1$. Denote $\ell_1,\ell_2,\ell_3$ the edge lengths of this basic triangle. 

Suppose the center of $\triangle$ is mapped to $0\in \mathbb{CP}^1$, then Figure \ref{red_path} shows that the ordinary monodromy has the form \eqref{nonproj_mono_eq}, where
\[
s = (\ell_2 + \ell_3)/4\pi , \quad t = (\ell_1 + \ell_3)/4\pi,
\]
and therefore
\[
s < \frac{1}{2},\quad t < \frac{1}{2},\quad s + t > \frac{1}{2},
\] 
which follow from 
\[
\ell_2 + \ell_3 < 2\pi, \quad \ell_1 + \ell_3 < 2\pi,\quad \ell_1 + 2\ell_2 + \ell_3 = 2\pi + \ell_2 > 2\pi.
\] 

(The projective monodromy corresponding to $z\mapsto z+\omega_1$ should be regarded as the composition $R(Q_3)R(Q_2)^{-1}$. It lifts to the ordinary monodromy $$(i\widetilde{R(Q_3)})(-i\widetilde{R(Q_2)}^{-1})=\widetilde{R(Q_3)}\widetilde{R(Q_2)}^{-1},$$ which is the lift of a rotation of angle $\ell_2+\ell_3$, hence the result above.)

\begin{figure}
\includegraphics[scale=0.5]{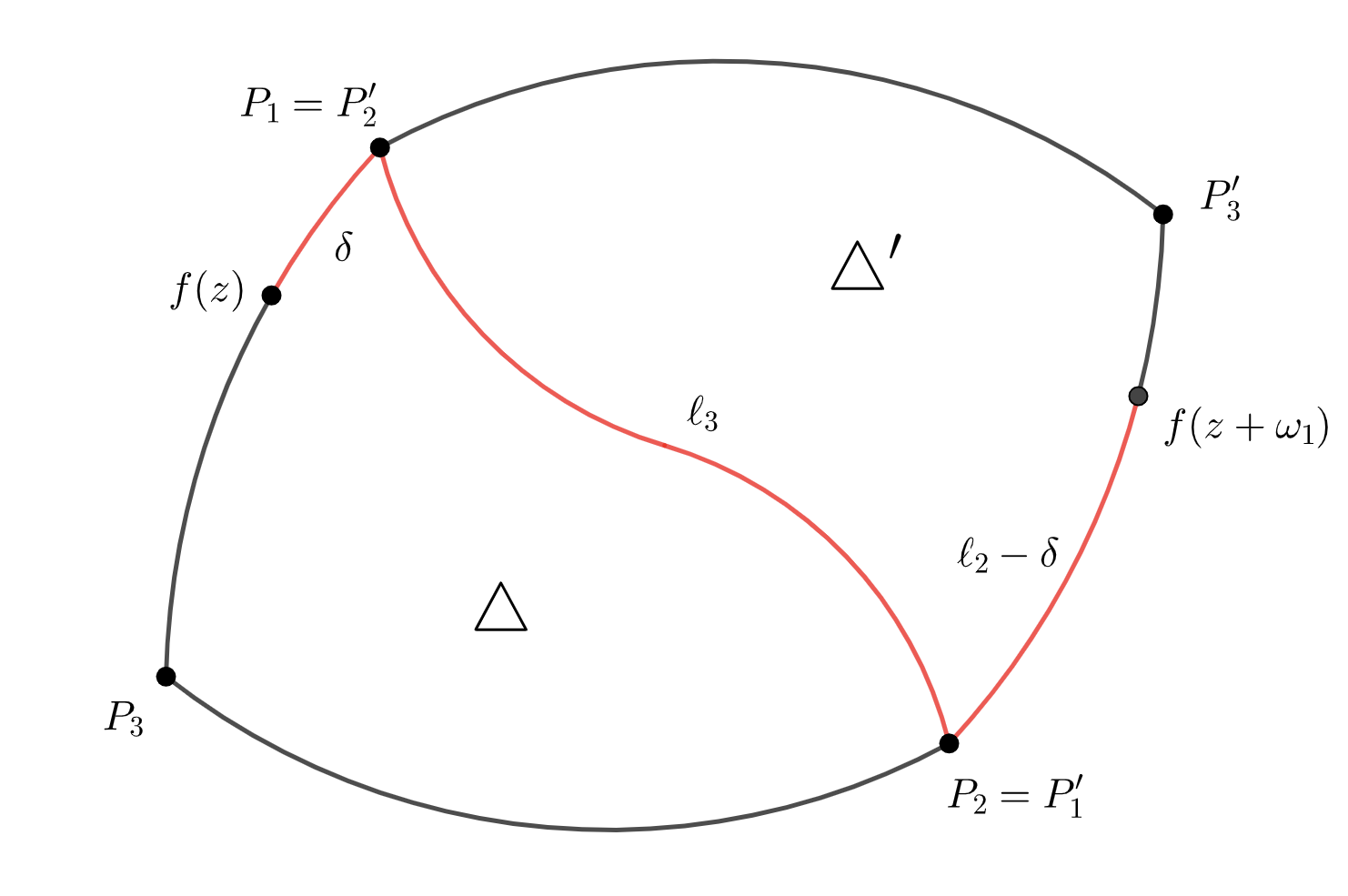}\\
\caption{A fundamental domain of $(T,x)$ with $n=1$. The image of the red path under the map $f$ is actually a smooth arc of length $\ell_2+\ell_3$ on the unit circle.}
\label{red_path}
\end{figure}

Conversely, if $s$ and $t$ satisfies the inequalities, then we can solve 
\[
(\ell_1,\ell_2,\ell_3) = (2\pi - 4\pi t, 2\pi - 4\pi s, 4\pi t + 4\pi s - 2\pi).
\]

If the center of $\triangle$ is mapped to $\infty\in \mathbb{CP}^1$, then $f$ has inverse monodromy, and we have  
\[
s > -\frac{1}{2},\quad t > -\frac{1}{2},\quad s + t < -\frac{1}{2}.
\] 

For $n>1$, note that attaching two hemispheres on the side of $L_2$ or $L_3$ will translate the parameter $s$ by $\frac{1}{2}$, so in total $s$ is translated by $\frac{\theta_1-1}{2}$. A similar argument holds for $t$. The result follows.
\end{proof}

The domain for projective monodromy parameters $2s, 2t$ is simply determined by doubling the ordinary monodromy parameters. Translating these regions into $(0,1)\times (0,1)$, we obtain the distribution of monodromy parameters shown in Figure~\ref{Figure:Number_deve_maps} and Figure~\ref{Figure:Number_ansatz_sol}. 

When the basic triangle has edge lengths being rational multiples of $\pi$, we have $s,t\in\mathbb Q$ and the resulting spherical torus will have finite monodromy. Now we are ready to prove Dahmen's formulae by the above proposition.


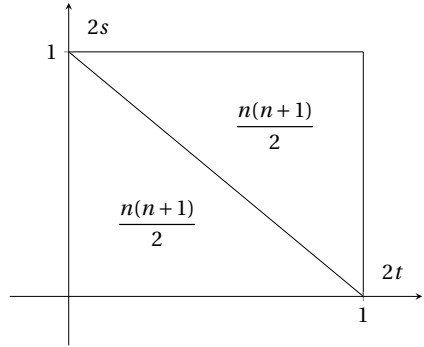
\begin{figure}
\begin{tikzpicture}[scale=0.8]
\begin{axis}[
    xmin=-0.2, xmax=1.2,
    xtick={0,1},
    ymin=-0.2, ymax=1.2,
    ytick={0,1},
    axis lines=center,
    axis on top=true,
    domain=0:1,
    ]
    \addplot [mark=none] coordinates {(1, 0) (0, 1)};
    \addplot [mark=none] coordinates {(0, 1) (1, 1)};
    \addplot [mark=none] coordinates {(1, 0) (1, 1)};
    \node at (axis cs: 0.7,0.7) {$\dfrac{n(n+1)}{2}$};
    \node at (axis cs: 0.3,0.3) {$\dfrac{n(n+1)}{2}$};
    \node at (axis cs: 1.1,0.1) {$2t$};
    \node at (axis cs: 0.1,1.1) {$2s$};
\end{axis}
\end{tikzpicture}
\caption{The number of Lam\'e equations with unitary monodromy and with given parameter $2s, 2t \pmod 1$ for $n\in\mathbb Z$.}
\label{Figure:Number_deve_maps}
\end{figure}

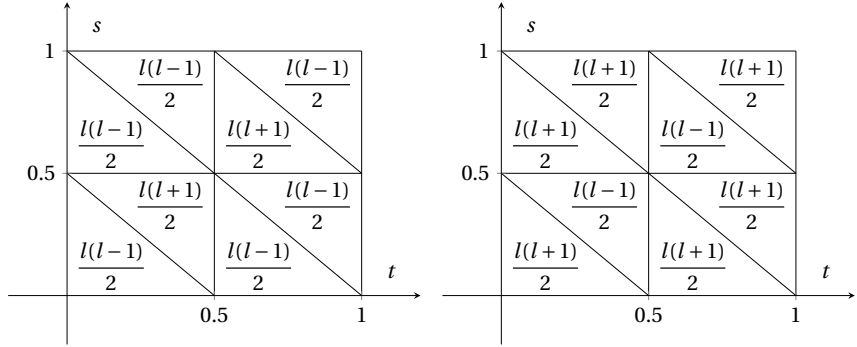
\begin{figure}
\begin{subfigure}{0.45\textwidth}
\begin{tikzpicture}[scale=0.8]
\begin{axis}[
    xmin=-0.2, xmax=1.2,
    xtick={0,0.5,1},
    ymin=-0.2, ymax=1.2,
    ytick={0,0.5,1},
    axis lines=center,
    axis on top=true,
    domain=0:1,
    ]
    \addplot [mark=none] coordinates {(0.5, 0) (0, 0.5)};
    \addplot [mark=none] coordinates {(1, 0) (0, 1)};
    \addplot [mark=none] coordinates {(1, 0.5) (0.5, 1)};
    \addplot [mark=none] coordinates {(0, 0.5) (1, 0.5)};
    \addplot [mark=none] coordinates {(0, 1) (1, 1)};
    \addplot [mark=none] coordinates {(0.5, 0) (0.5, 1)};
    \addplot [mark=none] coordinates {(1, 0) (1, 1)};
    \node at (axis cs: 0.15,0.12) {$\dfrac{l(l-1)}{2}$};
    \node at (axis cs: 0.65,0.12) {$\dfrac{l(l-1)}{2}$};
    \node at (axis cs: 0.15,0.62) {$\dfrac{l(l-1)}{2}$};
    \node at (axis cs: 0.65,0.62) {$\dfrac{l(l+1)}{2}$};
    \node at (axis cs: 0.35,0.37) {$\dfrac{l(l+1)}{2}$};
    \node at (axis cs: 0.85,0.37) {$\dfrac{l(l-1)}{2}$};
    \node at (axis cs: 0.35,0.87) {$\dfrac{l(l-1)}{2}$};
    \node at (axis cs: 0.85,0.87) {$\dfrac{l(l-1)}{2}$};
    \node at (axis cs: 1.1,0.1) {$t$};
    \node at (axis cs: 0.1,1.1) {$s$};
\end{axis}
\end{tikzpicture}
\end{subfigure}
\begin{subfigure}{0.45\textwidth}
\begin{tikzpicture}[scale=0.8]
\begin{axis}[
    xmin=-0.2, xmax=1.2,
    xtick={0,0.5,1},
    ymin=-0.2, ymax=1.2,
    ytick={0,0.5,1},
    axis lines=center,
    axis on top=true,
    domain=0:1,
    ]
    \addplot [mark=none] coordinates {(0.5, 0) (0, 0.5)};
    \addplot [mark=none] coordinates {(1, 0) (0, 1)};
    \addplot [mark=none] coordinates {(1, 0.5) (0.5, 1)};
    \addplot [mark=none] coordinates {(0, 0.5) (1, 0.5)};
    \addplot [mark=none] coordinates {(0, 1) (1, 1)};
    \addplot [mark=none] coordinates {(0.5, 0) (0.5, 1)};
    \addplot [mark=none] coordinates {(1, 0) (1, 1)};
    \node at (axis cs: 0.15,0.12) {$\dfrac{l(l+1)}{2}$};
    \node at (axis cs: 0.65,0.12) {$\dfrac{l(l+1)}{2}$};
    \node at (axis cs: 0.15,0.62) {$\dfrac{l(l+1)}{2}$};
    \node at (axis cs: 0.65,0.62) {$\dfrac{l(l-1)}{2}$};
    \node at (axis cs: 0.35,0.37) {$\dfrac{l(l-1)}{2}$};
    \node at (axis cs: 0.85,0.37) {$\dfrac{l(l+1)}{2}$};
    \node at (axis cs: 0.35,0.87) {$\dfrac{l(l+1)}{2}$};
    \node at (axis cs: 0.85,0.87) {$\dfrac{l(l+1)}{2}$};
    \node at (axis cs: 1.1,0.1) {$t$};
    \node at (axis cs: 0.1,1.1) {$s$};
\end{axis}
\end{tikzpicture}
\end{subfigure}
\caption{The number of Lam\'e equations with unitary monodromy with given parameters $s, t \pmod 1$, and with odd $n = 2l - 1$ (left) or even $n = 2l$ (right).}
\label{Figure:Number_ansatz_sol}
\end{figure}

\begin{proof} 
Note that if $N=3$ and $3|(n-1)$, then there is a unique spherical torus fixed by the $\mathbb Z/3$-action on the labels, thus $3P\widetilde{L}_n(N)-2\epsilon(n,N)$ counts the number of Lam\'e equations with parameter 
\[
(2s, 2t) = (\dfrac{k_1}{N}, \dfrac{k_2}{N}), \quad 0 < k_1 < N - k_2 < N, \quad \mbox{$\gcd (k_1, k_2, N) = 1$ and $N\geq 3$}.
\] 

For convenience we may also assume that this holds for $N=1, 2$. By counting the lattice points in Figure~\ref{Figure:Number_deve_maps} we have
\[
\sum_{d|N}(3 P\widetilde{L}_n(d) - 2\epsilon(n, d)) = \dfrac{n(n + 1)}{2}(N^2 - 3N + 2).
\]
The formula (\ref{thm_alg_dahmen}) then follows from the M\"obius inversion.

Similarly we have that $3\widetilde{L}_n(N) - 2\epsilon(n,N)$ counts the number of Lam\'e equations with parameter 
\[
(s, t) = (\dfrac{k_1}{N}, \dfrac{k_2}{N}), \quad 0 < k_1 < N - k_2 < N, \quad  \mbox{$\gcd(k_1, k_2, N) = 1$ and $N\geq 3$},
\] 
and we may assume that this holds for $N=1, 2$ as well. 

By counting the lattice points in Figure~\ref{Figure:Number_ansatz_sol} we have
\allowdisplaybreaks
\begin{align*}&\sum_{d|N}(3\widetilde{L}_n(d)-2\epsilon(n,d))\\&=\begin{cases}
a_n\dfrac{3(l-1)(l-2)}{2}+(b_n-a_n)\dfrac{l(l-1)}{2} \qquad\textnormal{if }N=2l-1,\\
a_n\dfrac{3(l-1)(l-2)}{2}+(b_n-a_n)\dfrac{(l-1)(l-2)}{2} \qquad\textnormal{if }N=2l,\end{cases}
\\&=\begin{cases}
\dfrac{n(n+1)}{2}\dfrac{l(l-1)}{2}-3a_n l\qquad\textnormal{if }N=2l-1,\\
\dfrac{n(n+1)}{2}\dfrac{l^2}{2}-(2a_n+b_n)\dfrac{3l-2}{2}\qquad\textnormal{if }N=2l,
\end{cases}
\\&=\begin{cases}
\dfrac{n(n+1)}{16}(N^2-1)-\dfrac{3}{2} a_n (N+1) \qquad\textnormal{if }N=2l-1,\\
\dfrac{n(n+1)}{16}N^2-(2a_n+b_n)(\dfrac{3N}{4}-1) \qquad\textnormal{if }N=2l.\end{cases}
\end{align*}

Again the formula (\ref{thm_alg_dahmen_II}) follows from the M\"obius inversion. The proofs of both Theorem \ref{thm_alg_dahmen} and Theorem \ref{thm_alg_dahmen_II} are now complete. 
\end{proof}

\section{Dessin d'enfants and conformal structures} \label{s:dessin}

Our next goal is to find general algorithms such that, for given $n\in \mathbb{Q}$ and a monodromy group $M$ from the list in Theorem~\ref{Thm:B.-W.}, we may compute all possible values for $g_2$, $g_3$, and $B$. In other words, assume $B=0$ or $1$ after rescaling, compute all possible conformal structures $j(\tau)$, the $j$-invariant of $E_{\tau}$.

\subsection{Spherical tori and dessin d'enfants} \label{subsec:tori_dessin}

In this subsection, we first recall the correspondence between Belyi maps and dessins (or dessin d'enfants) for the reader's convenience. Then, we explain their connection with spherical tori via an example.

A Belyi map is a holomorphic map from a Riemann surface $X$ to $\mathbb{CP}^1$ ramified only at three points. By Belyi's theorem, this is equivalent to saying that $X$ can be defined over an algebraic number field. A dessin is a connected graph with bicolored vertices (i.e. the two end points of an edge are colored differently) equipped with a cyclic ordering of the edges around each vertex. 

Given a Belyi map $F: X \rightarrow \mathbb{CP}^1$ ramified at $\{0,1,\infty\}$, the preimage $F^{-1}([0,1])$ can be given a structure of dessin by placing black points at the preimage of $0$ and white points at the preimage of $1$. Similar construction can be given on $F^{-1}([1,\infty])$, and $F^{-1}([\infty,0])$. Conversely, a dessin determines the corresponding Belyi map uniquely up to isomorphism over $\overline{\mathbb{Q}}$.

This correspondence is useful to construct 2nd order ODEs with finite (projective) monodromy by Theorem~\ref{t:KBD}.
\begin{example} \label{eg:LameDessin}
Let $k\in\mathbb{N}$. By Theorem~\ref{t:KBD}, $L_{3/10+k,B}$ has finite projective monodromy ($PM \cong A_5$), if it can be obtained by the pullback of $H_{1/2,1/3,1/5}$. From \cite[Lemma~1.5]{Baldassarri_Dwork_1979}, the degree of the pullback map equals $60k+18$ and with the following ramification table:
\begin{table}[H]
\begin{tabular}{|l|l|l|l|l|l|l|}
\hline
 & \ $0 \in E$ & \\ \hline
0  &  & $30k+9$ points of ramification index 2\\  \hline
1 &  &  $20k+6$ points of ramification index 3\\ \hline
$\infty$ & $10k+8$ & $10k+2$ points of ramification index 5\\\hline
\end{tabular}
\caption{ramification table for $L_{3/10 +k,B}$}
\label{Table:L3/10+k}
\end{table}
Table~\ref{Table:L3/10+k} means that the pullback function has $30k+9$ points mapped to $0$ of multiplicity $2$,  $20k+6$ points mapped to $1$ of multiplicity $3$, the point $0 \in E$ mapped to $\infty$ of multiplicity $10k+8$, and $10k+2$ points mapped to $\infty$ of multiplicity $5$. We simply denote it as $[ 2^{30k+9}, 3^{20k+6}, (10k+8)^1 5^{10k+2}  ]$. 

The map ramifies at three points: 0, 1, and $\infty$, making it a Belyi map. One could construct the map by drawing the corresponding dessin. See Figure~\ref{Figure:Dessin13/10} for a complete list of 6 dessins when $k=1$.
\end{example}

\begin{remark}
In \cite{Kazarian_Zograf_2015}, given a genus $g$ and degree $d$, a recursive formula for the weighted sum of the number of dessins is given. For a more refined counting, given a ramification table, it is in general hard to enumerate the number of corresponding dessins, needless to say on the constructions for all of them. See \cite{Musty_Schiavone_Sijsling_Voight_2019} for the database of dessins up to genus 4 and degree 9. See also \cite{Shabat_2016} for a general discussion.


For ramification tables coming from Lam\'e equations, Table~\ref{Table:Counting_sphe_tri} gives a complete answer of the enumerative question. For the rest of this section, we further give a construction by relating spherical triangles with finite non-Klein-four monodromy groups to dessins. 
\end{remark}

Given a spherical triangle $\triangle$, construct the corresponding spherical torus $T(\triangle)$ following \cite[Construction~1.4]{Eremenko_Mondello_Panov_2023}. When $T(\triangle)$ has finite non-Klein-four monodromy, we superimpose the graph of a Platonic solid onto the triangle, and, consequently, onto the corresponding torus. Then we mark the points in $\V$ (vertices) as black points and points in $\E$ (midpoints of edges) as white points. This gives the dessin $D(\triangle)$ on $T(\triangle)$. We apply the double cover $\wp: E \rightarrow \mathbb{CP}^1$ to descent the dessin to $\mathbb{CP}^1$, which we denoted $\widetilde{D}(\triangle)$.

We give an explicit example for the above construction.
\begin{example} \label{eg:spherical_dessin}
For $n=3/10$, consider the following spherical triangle $\triangle$ in Figure~\ref{Figure:Spherical3/10}.
\begin{figure}[H]
\includegraphics[scale=0.09]{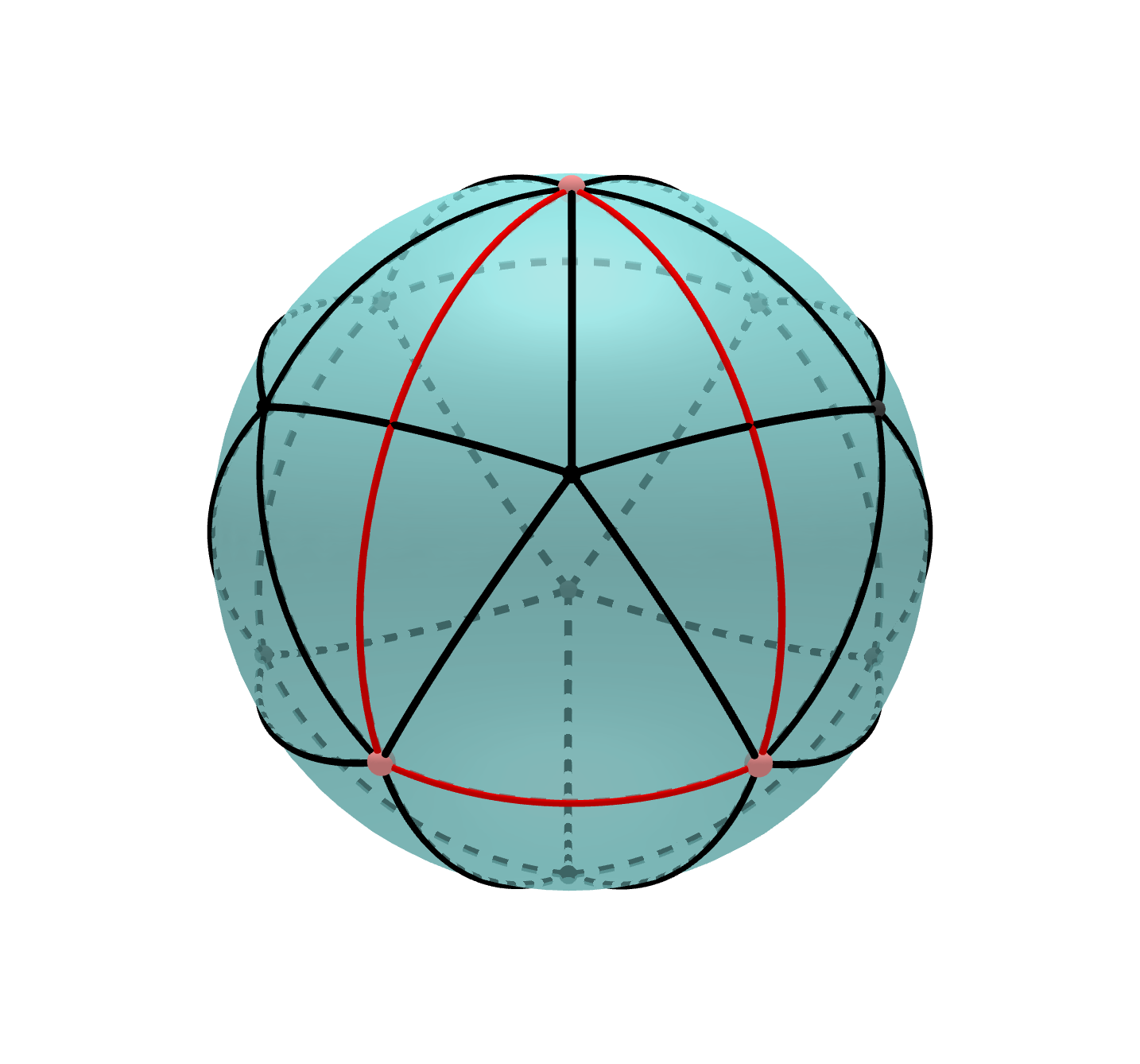} 
\caption{spherical triangle for $n=3/10$ with graph distance (1, 2, 2)}
\label{Figure:Spherical3/10}
\end{figure}
We obtain $D(\triangle)$ on E, and $\widetilde{D}(\triangle)$ on $\mathbb{CP}^1$ in Figure~\ref{Figure:Dessin3/10}.
\begin{figure}[H]
\begin{tikzpicture}[scale=0.8]
    \node[circle, fill, inner sep=2pt] (v1) at (0,0) {};
    \node[circle, fill, inner sep=2pt] (v2) at (4,0) {};
    \node[circle, fill, inner sep=2pt] (v3) at (2,4) {};
    \node[circle, fill, inner sep=2pt] (v4) at (-2,4) {};
    \node[circle, fill, inner sep=2pt] (v5) at (2,4/3) {};
    \node[circle, fill, inner sep=2pt] (v6) at (0,8/3) {};

    \node at (-0.28, 0.1) {\tiny $v_1$};
    \node at (4.2, 0.2) {\tiny $v_1$};
    \node at (2.2, 4.2) {\tiny $v_1$};
    \node at (-1.8, 4.2) {\tiny $v_1$};
    \node at (2.17, 1.67) {\tiny $v_2$};
    \node at (0, 3) {\tiny $v_3$};
    \node at (2,0.5) {\small $\frac{w_1}{2}$};
    \node at (1.45, 2.1) {\small $\frac{w_3}{2}$};
    \node at (3.4,2) {\small $\frac{w_2}{2}$};
    
    \node[circle, draw, inner sep=2pt] at (2,0) {};
    \node[circle, draw, inner sep=2pt] at (3,2) {};
    \node[circle, draw, inner sep=2pt] at (0,4) {};
    \node[circle, draw, inner sep=2pt] at (-1,2) {};
    \node[circle, draw, inner sep=2pt] at (1,2) {};
    \node[circle, draw, inner sep=2pt] at (1,0.65) {};
    \node[circle, draw, inner sep=2pt] at (3,0.65) {};
    \node[circle, draw, inner sep=2pt] at (0,1.35) {};
    \node[circle, draw, inner sep=2pt] at (-1,3.35) {};
    \node[circle, draw, inner sep=2pt] at (1,3.35) {};
    \node[circle, draw, inner sep=2pt] at (2,2.65) {};

    \draw[dashed, purple] (0,0.05) -- (4,0.05);
    \draw[dashed, purple] (v2) -- (v3);
    \draw[dashed, purple] (-2,4.05) -- (2,4.05);
    \draw[dashed, purple] (v1) -- (v4);
    \draw[dashed, purple] (v1) -- (v3);
    
    \draw[black] (v1) -- (v5);
    \draw[black] (v2) -- (v5);
    \draw[black] (v3) -- (v5);
    \draw[black] (v1) -- (v6);
    \draw[black] (v4) -- (v6);
    \draw[black] (v3) -- (v6);
    \draw[black] (v5) -- (v6);
    \draw[black] (v5) -- (3,2);
    \draw[black] (v6) -- (-1,2);
    \draw[black] (v1) -- (v2);
    \draw[black] (v3) -- (v4);

    \draw[->] (4.5,2) -- (5.5,2);
    \node at (5, 2.3) {$\wp$};

    \node[circle, fill, inner sep=2pt] (v7) at (7.4,1) {};
    \node[circle, fill, inner sep=2pt] (v8) at (10.4,2.5) {};

    \node[circle, draw, inner sep=2pt] (v9) at (8.15,3.25) {};
    \node[circle, draw, inner sep=2pt] (v10) at (8.9,1.75) {};
    \node[circle, draw, inner sep=2pt] (v11) at (9.65,0.25) {};
    \node[circle, draw, inner sep=2pt] (v12) at (8.4,1) {};
    \node[circle, draw, inner sep=2pt] (v13) at (9.4,2.5) {};
    \node[circle, draw, inner sep=2pt] (v14) at (10.9,3.366) {};

    \draw (8.9,1.75) circle ({ sqrt(11.25)/2 });

    \draw[black] (v7) -- (v8);
    \draw[black] (v7) -- (v12);
    \draw[black] (v8) -- (v13);
    \draw[black] (v8) -- (v14);

    \node at (8.8,1) {\small $e_1$};
    \node at (9,2.5) {\small $e_3$};
    \node at (11.3,3.366) {\small $e_2$};
    \node at (7,1) {\small $\infty$};
    \node at (12,2.5) {\small $\wp(v_2)=\wp(v_3)$};

\end{tikzpicture}
\caption{Dessins on $E$ (left) and on $\mathbb{P}^1$ (right)}
\label{Figure:Dessin3/10}
\end{figure}
The double cover $\wp$ maps $v_1$ to $\infty$, $v_2$ and $v_3$ to the same point, and half periods $w_i/2$ to $e_i$.

Figure~\ref{Figure:Spherical13/10} demonstrates the change of spherical triangle after attaching half sphere.

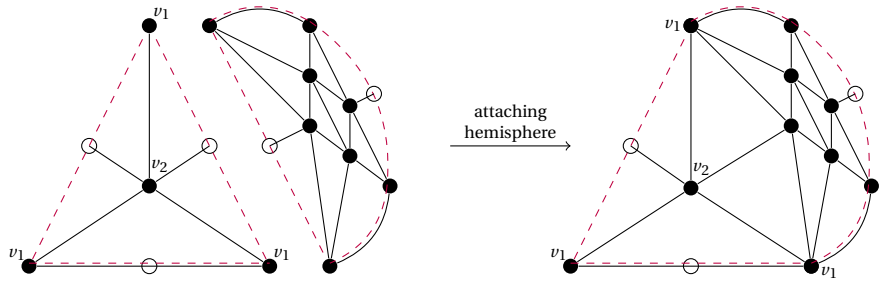
\begin{figure}[H]
\begin{tikzpicture}[scale=0.8]
    \node[circle, fill, inner sep=2pt] (v9) at (0,0) {};
    \node[circle, fill, inner sep=2pt] (v10) at (4,0) {};
    \node[circle, fill, inner sep=2pt] (v11) at (2,4) {};
    \node[circle, fill, inner sep=2pt] (v12) at (2,4/3) {};

    \node at (-0.2, 0.2) {\tiny $v_1$};
    \node at (4.2, 0.2) {\tiny $v_1$};
    \node at (2.2, 4.2) {\tiny $v_1$};
    \node at (2.17, 1.67) {\tiny $v_2$};
    
    \node[circle, draw, inner sep=2pt] at (2,0) {};
    \node[circle, draw, inner sep=2pt] at (3,2) {};
    \node[circle, draw, inner sep=2pt] at (1,2) {};

    \draw[dashed, purple] (0,0.05) -- (4,0.05);
    \draw[dashed, purple] (v10) -- (v11);
    \draw[dashed, purple] (v9) -- (v11);
    
    \draw[black] (v9) -- (v12);
    \draw[black] (v10) -- (v12);
    \draw[black] (v11) -- (v12);
    \draw[black] (v12) -- (3,2);
    \draw[black] (v9) -- (v10);
    \draw[black] (v12) -- (1,2);

    \node[circle, fill, inner sep=2pt] (v1) at (3,4) {};
    \node[circle, fill, inner sep=2pt] (v2) at (5,0) {};
    \node[circle, fill, inner sep=2pt] (v3) at (14/3,4) {};
    \node[circle, fill, inner sep=2pt] (v4) at (6,4/3) {};

    \node[circle, draw, inner sep=2pt] at (4,2) {};
    \node[circle, draw, inner sep=2pt] at (86/15,43/15) {};

    \draw[dashed, purple] (v1) -- (v2);
    \draw[dashed, purple] (3,4.07) to[bend left] (14/3,4.07);
    \draw[dashed, purple] (5.95,4/3) to[bend left] (5,0.05);
    \draw[dashed, purple] (14/3,4.07) to[bend left] (5.95,4/3);

    \node[circle, fill, inner sep=2pt] (v5) at (14/3,7/3) {};
    \node[circle, fill, inner sep=2pt] (v6) at (14/3,19/6) {};
    \node[circle, fill, inner sep=2pt] (v7) at (16/3,8/3) {};
    \node[circle, fill, inner sep=2pt] (v8) at (16/3,11/6) {};

    \draw (v1) to[bend left] (v3);
    \draw (v4) to[bend left] (v2);
    \draw (v1) -- (v5);
    \draw (v2) -- (v5);
    \draw (v3) -- (v6);
    \draw (v5) -- (v6);
    \draw (v5) -- (v8);
    \draw (v6) -- (v8);
    \draw (v6) -- (v7);
    \draw (v7) -- (v8);
    \draw (v2) -- (v8);
    \draw (v4) -- (v8);
    \draw (v1) -- (v6);
    \draw (v3) -- (v7);
    \draw (v4) -- (v7);
    \draw (v5) -- (4,2);
    \draw (v7) -- (86/15,43/15);

    \draw[->] (7,2) -- (9,2);
    \node at (8, 2.2) {\tiny hemisphere};
    \node at (8, 2.55) {\tiny attaching};
    
    \node[circle, fill, inner sep=2pt] (v29) at (9,0) {};
    \node[circle, fill, inner sep=2pt] (v30) at (13,0) {};
    \node[circle, fill, inner sep=2pt] (v31) at (11,4) {};
    \node[circle, fill, inner sep=2pt] (v32) at (11,1.3) {};

    \node at (8.8, 0.2) {\tiny $v_1$};
    \node at (13.3, -0.1) {\tiny $v_1$};
    \node at (10.7, 4) {\tiny $v_1$};
    \node at (11.17, 1.62) {\tiny $v_2$};
    
    \node[circle, draw, inner sep=2pt] at (10,2) {};
    \node[circle, draw, inner sep=2pt] at (11,0) {};

    \draw[dashed, purple] (9,0.05) -- (13,0.05);
    \draw[dashed, purple] (v29) -- (v31);
    
    \draw[black] (v29) -- (v32);
    \draw[black] (v30) -- (v32);
    \draw[black] (v31) -- (v32);
    \draw[black] (v29) -- (v30);
    \draw[black] (v32) -- (10,2);

        \node[circle, fill, inner sep=2pt] (v21) at (11,4) {};
        \node[circle, fill, inner sep=2pt] (v22) at (13,0) {};
        \node[circle, fill, inner sep=2pt] (v23) at (38/3,4) {};
        \node[circle, fill, inner sep=2pt] (v24) at (14,4/3) {};

        \node[circle, draw, inner sep=2pt] at (206/15,43/15) {};

        \draw[dashed, purple] (11,4.07) to[bend left] (38/3,4.07);
        \draw[dashed, purple] (13.95,4/3) to[bend left] (13,0.05);
        \draw[dashed, purple] (38/3,4.07) to[bend left] (13.95,4/3);

        \node[circle, fill, inner sep=2pt] (v25) at (38/3,7/3) {};
        \node[circle, fill, inner sep=2pt] (v26) at (38/3,19/6) {};
        \node[circle, fill, inner sep=2pt] (v27) at (40/3,8/3) {};
        \node[circle, fill, inner sep=2pt] (v28) at (40/3,11/6) {};

        \draw (v21) to[bend left] (v23);
        \draw (v24) to[bend left] (v22);
        \draw (v21) -- (v25);
        \draw (v22) -- (v25);
        \draw (v23) -- (v26);
        \draw (v25) -- (v26);
        \draw (v25) -- (v28);
        \draw (v26) -- (v28);
        \draw (v26) -- (v27);
        \draw (v27) -- (v28);
        \draw (v22) -- (v28);
        \draw (v24) -- (v28);
        \draw (v21) -- (v26);
        \draw (v23) -- (v27);
        \draw (v24) -- (v27);
        \draw (v27) -- (206/15,43/15);
        \draw (v32) -- (v25);
\end{tikzpicture}
\caption{spherical triangle for $n=13/10$}
\label{Figure:Spherical13/10}
\end{figure}
\quad\\
The dessin of the corresponding Platonic hemisphere is given in Figure~\ref{Figure:Hemisphere_Platonic}. For simplicity, we draw $\bullet - \bullet$ to represent $\bullet - \circ - \bullet$ if no confusion may occur.

With the same construction, we obtain $D(\triangle)$ on $E$, and $\widetilde{D}(\triangle)$ on $\mathbb{P}^1$ as in Figure~\ref{Figure:Dessin13/10}.
\begin{figure}[H]
\begin{tikzpicture}[scale=0.8]
    \node[circle, fill, inner sep=2pt] (v29) at (-1,-2) {};
    \node[circle, fill, inner sep=2pt] (v30) at (3,-2) {};
    \node[circle, fill, inner sep=2pt] (v31) at (1,2) {};
    \node[circle, fill, inner sep=2pt] (v32) at (1,-0.7) {};

    \node at (3.35, -2.1) {\tiny $v_1$};
    \node at (1.18, -0.37) {\tiny $v_2$};
    \node at (-0.7, -2.2) {\tiny $v_1$};
    \node at (0.8, 2.2) {\tiny $v_1$};
    \node at (-0.83, 0.35) {\tiny $v_3$};
    
    \node[circle, draw, inner sep=2pt] at (1,-2) {};
    \node[circle, draw, inner sep=2pt] at (0,0) {};
    \node at (0, -0.5) {\small $\frac{w_3}{2}$};
    \node at (1, -2.5) {\small $\frac{w_1}{2}$};
    \node at (4.1,1.1) {\small $\frac{w_2}{2}$};

    \draw[dashed, purple] (-1,-1.95) -- (3,-1.95);    
    \draw[black] (v29) -- (v32);
    \draw[black] (v30) -- (v32);
    \draw[black] (v31) -- (v32);
    \draw[black] (v29) -- (v30);
    \draw[black] (v32) -- (1,2);

    \node[circle, fill, inner sep=2pt] (v21) at (1,2) {};
    \node[circle, fill, inner sep=2pt] (v22) at (3,-2) {};
    \node[circle, fill, inner sep=2pt] (v23) at (8/3,2) {};
    \node[circle, fill, inner sep=2pt] (v24) at (4,-2/3) {};

    \node[circle, draw, inner sep=2pt] at (56/15,13/15) {};

    \draw[dashed, purple] (1,2.07) to[bend left] (8/3,2.07);
    \draw[dashed, purple] (3.95,-2/3) to[bend left] (3,-1.95);
    \draw[dashed, purple] (8/3,2.07) to[bend left] (3.95,-2/3);

    \node[circle, fill, inner sep=2pt] (v25) at (8/3,1/3) {};
    \node[circle, fill, inner sep=2pt] (v26) at (8/3,7/6) {};
    \node[circle, fill, inner sep=2pt] (v27) at (10/3,2/3) {};
    \node[circle, fill, inner sep=2pt] (v28) at (10/3,-1/6) {};

    \draw (v21) to[bend left] (v23);
    \draw (v24) to[bend left] (v22);
    \draw (v21) -- (v25);
    \draw (v22) -- (v25);
    \draw (v23) -- (v26);
    \draw (v25) -- (v26);
    \draw (v25) -- (v28);
    \draw (v26) -- (v28);
    \draw (v26) -- (v27);
    \draw (v27) -- (v28);
    \draw (v22) -- (v28);
    \draw (v24) -- (v28);
    \draw (v21) -- (v26);
    \draw (v23) -- (v27);
    \draw (v24) -- (v27);
    \draw (v27) -- (56/15,13/15);
    \draw (v32) -- (v25);

    \node[circle, fill, inner sep=2pt] (v9) at (1,2) {};
    \node[circle, fill, inner sep=2pt] (v10) at (-3,2) {};
    \node[circle, fill, inner sep=2pt] (v11) at (-1,-2) {};
    \node[circle, fill, inner sep=2pt] (v12) at (-1,0.7) {};

    \node at (-3.3, 2.1) {\tiny $v_1$};
    
    \node[circle, draw, inner sep=2pt] at (-1,2) {};

    \draw[dashed, purple] (1,1.95) -- (-3,1.95);
    
    \draw[black] (v9) -- (v12);
    \draw[black] (v10) -- (v12);
    \draw[black] (v11) -- (v12);
    \draw[black] (v9) -- (v10);
    \draw[black] (v12) -- (1,2);

    \node[circle, fill, inner sep=2pt] (v1) at (-1,-2) {};
    \node[circle, fill, inner sep=2pt] (v2) at (-3,2) {};
    \node[circle, fill, inner sep=2pt] (v3) at (-8/3,-2) {};
    \node[circle, fill, inner sep=2pt] (v4) at (-4,2/3) {};

    \node[circle, draw, inner sep=2pt] at (-56/15,-13/15) {};

    \draw[dashed, purple] (-1,-2.07) to[bend left] (-8/3,-2.07);
    \draw[dashed, purple] (-3.95,2/3) to[bend left] (-3,1.95);
    \draw[dashed, purple] (-8/3,-2.07) to[bend left] (-3.95,2/3);

    \node[circle, fill, inner sep=2pt] (v5) at (-8/3,-1/3) {};
    \node[circle, fill, inner sep=2pt] (v6) at (-8/3,-7/6) {};
    \node[circle, fill, inner sep=2pt] (v7) at (-10/3,-2/3) {};
    \node[circle, fill, inner sep=2pt] (v8) at (-10/3,1/6) {};

    \draw (v1) to[bend left] (v3);
    \draw (v4) to[bend left] (v2);
    \draw (v1) -- (v5);
    \draw (v2) -- (v5);
    \draw (v3) -- (v6);
    \draw (v5) -- (v6);
    \draw (v5) -- (v8);
    \draw (v6) -- (v8);
    \draw (v6) -- (v7);
    \draw (v7) -- (v8);
    \draw (v2) -- (v8);
    \draw (v4) -- (v8);
    \draw (v1) -- (v6);
    \draw (v3) -- (v7);
    \draw (v4) -- (v7);
    \draw (v7) -- (-56/15,-13/15);
    \draw (v12) -- (v5);

    \draw (v32) -- (v12);

     \draw[->] (4.6,0) -- (5.6,0);
    \node at (5.1, 0.3) {$\wp$};

    \node[circle, fill, inner sep=2pt] (v31) at (7,-2) {};
    \node[circle, fill, inner sep=2pt] (v32) at (8.5,-0.5) {};
    \node[circle, fill, inner sep=2pt] (v33) at (9,0) {};
    \node[circle, fill, inner sep=2pt] (v34) at (10,1) {};
    \node[circle, fill, inner sep=2pt] (v35) at (11,2) {};
    \node[circle, fill, inner sep=2pt] (v39) at (10.609,-0.276) {};
    \node[circle, fill, inner sep=2pt] (v40) at (8.724,1.609) {};

    \node[circle, draw, inner sep=2pt] (v36) at (7.5,-1) {};
    \node[circle, draw, inner sep=2pt] (v37) at (8,-1.5) {};
    \node[circle, draw, inner sep=2pt] (v38) at (10.5,1.5) {};

     \draw (v31) -- (v33);
     \draw (v32) -- (v36);
     \draw (v31) -- (v37);
     \draw (v39) -- (v40);
     \draw (v33) -- (v39);
     \draw (v33) -- (v40);
     \draw (v34) -- (v39);
     \draw (v34) -- (v40);
     \draw (v34) -- (v38);

    \draw (7.75,-1.25) circle ({sqrt{18}/4});   
    \draw (8,-1) circle ({sqrt(2)});
    \draw (10.5,1.5) circle ({sqrt(2)/2}); 
    \draw (10.609,-0.276) arc ( -64.486:154.486:{sqrt(2)});
    \draw (8.724,1.609) arc ( 83.937:366.063:{sqrt(18)/2} );
    \draw ( 7,-2 ) arc ( 225:405:{sqrt(8)} );
    
    \node at (8.35,-1.5) {\small $e_1$};
    \node at (7.5,-0.7) {\small $e_3$};
    \node at (10.7,1.75) {\small $e_2$};
    \node at (6.5,-2) {\small $\infty$};
    \node at (8.3,0) {\tiny $\wp(v_2)$};
\end{tikzpicture}
\caption{Dessins on $E$ (left) and on $\mathbb{P}^1$ (right)}
\label{Figure:Dessin13/10}
\end{figure}
The double cover $\wp$ maps $v_1$ to $\infty$, $v_2$ and $v_3$ to the same point, and half periods $w_i/2$ to $e_i$. 
\end{example}

Given any spherical triangle with finite non-Klein-four monodromy, one obtains the corresponding dessins on $E$ and on $\mathbb{CP}^1$ following the process in Example~\ref{eg:spherical_dessin}.
The construction problem then reduces to the long standing question on computing the corresponding Belyi map with a given dessin.

\subsection{Conformal structures} \label{subsec:conformal}
Given a dessin, computing the corresponding Belyi map has high computational complexity. An explicit computation is reduced to a system of polynomial equations. For genus 0 case, many methods (including Gr\"obner basis method, complex analytic method, and $p$-adic method) have been developed. We explain the general idea of Gr\"ober basis and complex analytic methods below. The later one, though being approximative in nature, is more applicable when the degree is large. We refer to \cite{Sijsling_Voight_2014} for the detail.

Following the notation in \cite{Beffaras_2015}, consider a (genus 0) ramification table
\[
[ z_1^{d^0_1}\dots z_{n_0}^{d^0_{n_0}}, o_1^{d^1_1}\dots o_{n_1}^{d^1_{n_1}}, p_1^{d^{\infty}_1} \dots p_{n_{\infty}}^{d^{\infty}_{n_{\infty}}}   ]
\]
with 
\[
-2 = \sum_{i=1}^{n_0} (d^0_{n_i}-1) + \sum_{j=1}^{n_1} (d^1_{n_j}-1) + \sum_{k=1}^{n_{\infty}} (d^{\infty}_{n_{\infty}}-1)
\]
by Riemann-Hurwitz formula.

Up to a M\"obius transformation, we may assume $z_1=0$, $o_1=1$, and $p_1 = \infty$.
The corresponding Belyi map takes the form  
\[
F(z) = \lambda \frac{ \prod_{i=1}^{n_0} (z-z_i)^{d^0_i} }{ \prod_{j=2}^{n_{\infty}} (z-p_j)^{d^{\infty}_j} }
\]
with 
\[
F(z) -1 = \lambda \frac{ \prod_{i=1}^{n_1} (z-o_i)^{d^1_{n_i}} }{ \prod \prod_{j=2}^{n_{\infty}} (z-p_j)^{d^{\infty}_j} }.
\]
By eliminating $F$, we obtain a system of $n_0+n_1+n_{\infty}-2$ polynomial equations with $n_0+n_1+n_{\infty}-2$ unknowns. In principle, it can be computed via a Gr\"obner basis calculation.

Complex analytic methods for determining Belyi maps are approximative. The approach involves computing a high precision solution over $\mathbb{C}$, from which one can then construct an exact solution over $\overline{\mathbb{Q}}$. 

As an example, let $\mathfrak{C}^*$ be the set of triples $(Z,O,P)$, where $Z=\{z_i\}_{i=1}^{n_0}$, $O=\{o_i\}_{i=1}^{n_1}$, and $P=\{p_i\}_{i=1}^{n_{\infty}}$ are three finite sets in $\mathbb{CP}^1$ with $z_1=0$, $o_1=1$, and $p_1=\infty$. Given $\mathcal{C} \in \mathfrak{C}^*$, we define
\[
F_{\lambda, \mathcal{C}} := \lambda \frac{ \prod_{i=1}^{n_0} (z-z_i)^{d^0_i} }{ \prod_{j=2}^{n_{\infty}} (z-p_j)^{d^{\infty}_j} }.
\]
Consider the map $\Phi: \mathbb{C} \times \mathfrak{C}^* \rightarrow \mathbb{C}^{n_0+n_1+n_{\infty}-2}$ given by
\[
\begin{split}
&\Phi(\lambda,\mathcal{C}) :=
\\
&\quad ( F(o_1)-1, F'(o_1),\dots, F^{(d^1_1-1)}(o_1),F(o_2)-1,F'(o_2),\dots , F^{(d^1_{n_1}-1)}(o_{n_1})  ),
\end{split}
\]
where $F = F_{\lambda,\mathcal{C}}$.
The set of preimages of $(0,0,\dots,0)$ by $\Phi$ will be the Belyi maps. We simply apply the Newton's iteration 
\[
(\lambda_{n+1}, \mathcal{C}_{n+1}) := (\lambda_n, \mathcal{C}_n) - J_{\Phi}(\lambda_n,\mathcal{C}_n)^{-1} \Phi(\lambda_n, \mathcal{C}_n)
\]
with a suitable choice of initial approximation (c.f. Remark~\ref{Rmk:algorithm}). 

Given a high precision solution over $\mathbb{C}$, we may use the LLL lattice-reduction algorithm \cite{Lenstra_Lenstra_Lovasz_1982} to recognize the algebraic one. Finally, we need to verity the correctness of the resulting solution. In contrast to the construction, it does admit satisfactory methods \cite[\S 8]{Sijsling_Voight_2014}.

\begin{remark}\label{Rmk:algorithm}
As discussed in \cite{Beffaras_2015, Sijsling_Voight_2014}, random choices for initial approximations fail to converge. 
In practice, one can always obtain a suitable choice using the circle packing method.
More precisely, a dessin naturally gives rise to a triangulation and hence a circle packing. When the triangulation is iteratively hexagonally refined, Bowers and Stephenson \cite{Bowers_Stephenson_2004} proved that the resulting circle packing will converge to a correct solution. 

\end{remark}

Due to the high computational complexity involved in computing Belyi maps, our understanding of the structural correspondence between spherical triangles $\triangle$ and conformal structures of $D(\triangle)$ remains limited. Below we list some applications and an open question.

\begin{remark} \label{Rmk:specialtau}
When the dessin has $\mathbb{Z}/ 4\mathbb{Z}$ (resp. $\mathbb{Z}/ 6\mathbb{Z}$) automorphism, the resulting $j$-invariant equals $1728$ (resp. $0$). We list all corresponding cases except Klein-four from Table~\ref{Table:Sph_triangle}.
\begin{itemize}
    \item For $j=1728$, all cases are
    
    Cubical: $n=\frac{1}{6}$, (1,1,2); $n=\frac{5}{6}$, (1,1,2').
    \item For $j=0$, all cases are

    Octahedral: $n\in \{ \frac{1}{4}, \frac{7}{4} \}$, (1,1,1);

    Icosahedral: $n\in \{\frac{1}{10}, \frac{19}{10} \}$, (1,1,1); $n\in\{ \frac{7}{10}, \frac{13}{10} \}$, (2,2,2);

    Dihedral: $n=1$, $(k_1,k_2,k_3)=(1,1,1)$.
\end{itemize}
By attaching hemispheres which preserves the symmetry, we observe that 
\begin{itemize}
    \item for $n\in \{ \frac{1}{6}, \frac{5}{6} \} + 2\mathbb{N} $, there exists $L_{n,B}$ with finite monodromy and with $j=1728$. The construction is given by attaching same number of hemispheres on equal sides of the spherical triangles. 
    \item for $n\in \{1, \frac{1}{4}, \frac{7}{4}, \frac{1}{10}, \frac{7}{10}, \frac{13}{10}, \frac{19}{10}  \} + 3 \mathbb{N}$, there exists $L_{n,B}$ with finite monodromy and with $j=0$. Here we attach same number of hemispheres on all sides.
\end{itemize}
For the non-dihedral case, this observation is consistent with the construction given by Maier \cite[Proposition~3.4]{Maier_2004}.
\end{remark}

\begin{remark} \label{Rmk:tau}
The bijection between the triangle $\triangle$ and $\Omega_5$ is given in \cite[Theorem~1.2]{Chen_Kuo_Lin_Wang_2018}. Here $\triangle$ represents all possible monodromy parameters $s$, and $t$ (mod 1) of equation~\eqref{eqn_Lame_ellipic}. $\Omega_5$ be the set of tori such that equation~\eqref{eqn_Lame_ellipic} has unitary monodromy. See Figure~\ref{Figure:triangle_omega}.

\begin{figure}[h]
\hspace{10mm}
\adjincludegraphics[scale=0.15,trim={{.25\width} 0 {.25\width} 0}]{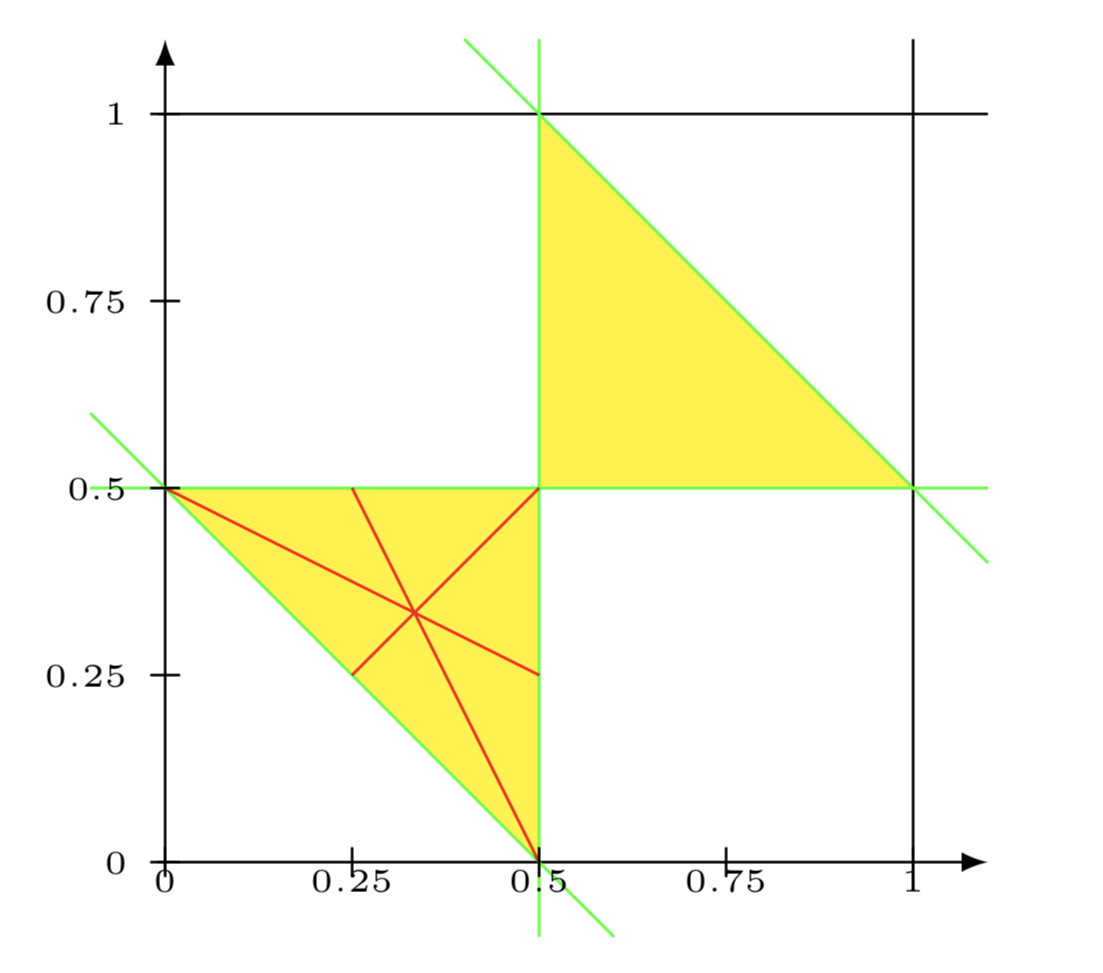}
\hfill
\adjincludegraphics[scale=0.16,trim={{.25\width} 0 {.25\width} 0}]{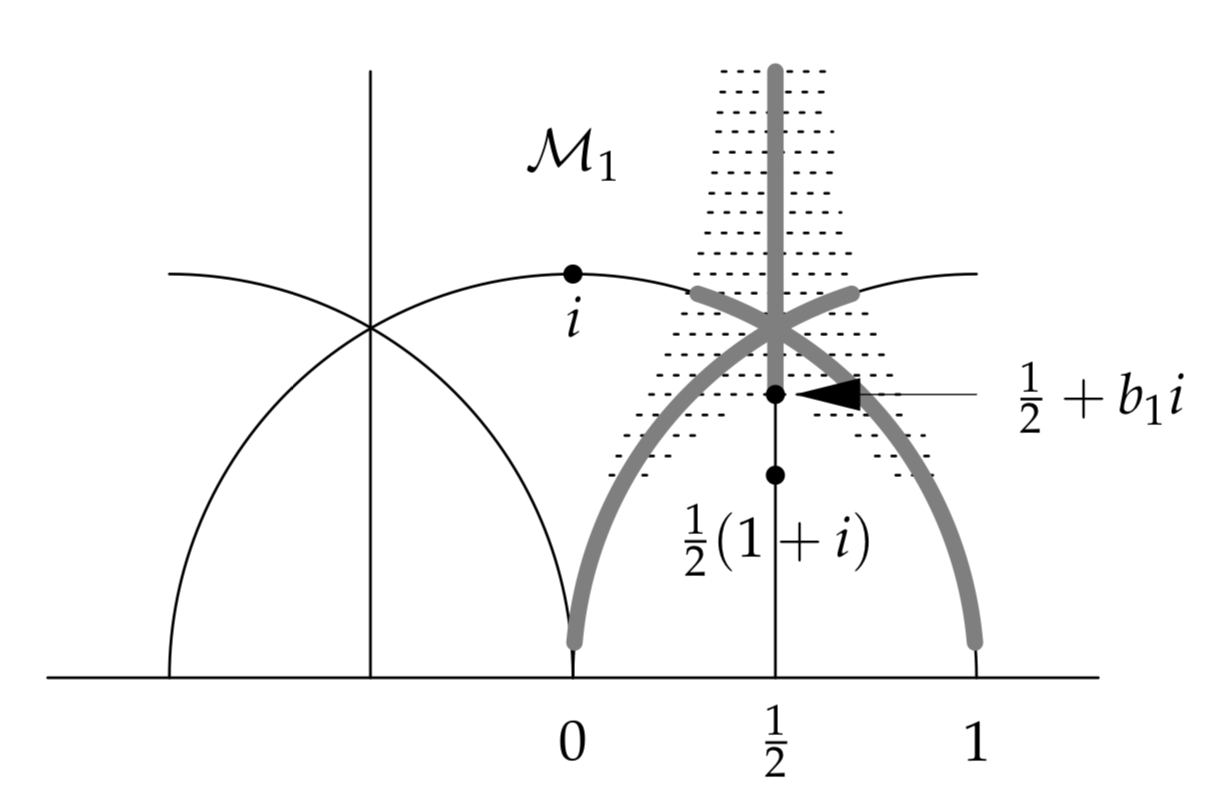}
\hspace{10mm}
\caption{(\cite[Figure~3, 4]{Chen_Kuo_Lin_Wang_2018}) The triangle $\triangle$ which is bijection to $\Omega_5$ (left), $\Omega_5$ contains a neighborhood of $e^{2\pi i /3}$ (right)}
\label{Figure:triangle_omega}
\end{figure}

When $L_{1,B}$ has finite monodromy group, the underlying torus $E_{\tau}$ is defined over $\overline{\mathbb{Q}}$, i.e. $j(\tau) \in \overline{\mathbb{Q}}$. The converse is not true. We claim that there exist infinitely many $\tau \in \Omega_5$ with $j(\tau) \in \overline{\mathbb{Q}}$ such that the corresponding $L_{1,B}$ has no finite monodromy. Note that the absolute Galois group, ${\rm Gal}(\overline{\mathbb{Q}}/\mathbb{Q})$, acts on dessins, and on the corresponding defining field, $\mathbb{Q}(j(\tau))$, of the torus. A Galois action preserves the ramification table and gives rise to a corresponding Lam\'e equation with finite monodromy. To find such a $\tau$ in the claim, it suffices to find $\tau\in \Omega_5$ and $\tau' \notin \Omega_5$ such that $j(\tau)$ and $j(\tau') $ are algebraic numbers and lie in the same Galois orbit. There are infinitely many, see Example~\ref{Eg:tau}.
\end{remark}
\begin{example} \label{Eg:tau} Note that $b_1 \approx 0.705$, and hence $j(\frac{1}{2} + b_1i) \approx 243.797$. Note also that $j$-invariant is monotone decreasing on 
\[
(+\infty i, i] \sqcup \widearc{i \ e^{2 \pi i /3}}   \sqcup [ e^{2\pi i /3}, \frac{1}{2} + \infty i ).
\]
Here $\widearc{ i \ e^{e^{2\pi i/3}} }$ is the arc starting from $i$ and moving clockwise to $e^{2\pi i /3}$ with center at the origin.

There exists infinitely many pairs $(p,q) \in \mathbb{Q}^2$ with $q$ not a rational perfect square such that $p+\sqrt{q} > j(\frac{1}{2}+b_1 i)$ and $p-\sqrt{q} \leq j(\frac{1}{2}+b_1i)$. The corresponding $\tau$ with $j(\tau) = p+\sqrt{q}$ will be what we claimed in Remark~\ref{Rmk:tau}.
\end{example}
We finish this section with a nature and widely open question. 
\medskip

\centerline{\emph{ Given $\tau$ with $j(\tau) \in \overline{\mathbb{Q}}$, classify all equations~(\ref{eqn_Lame_ellipic}) with finite monodromy.}}
\medskip
For the special cases when $j(\tau) = 0$ or $1728$, all equations $L_{n,B}$ with $n\in \{ \frac{1}{2} \} + \mathbb{Z}_{\geq 0}$ and finite projective monodromy ($\cong K_4$) are characterized by Theorem~\ref{t:BHC}. For $n\notin \frac{1}{2}+\mathbb{Z}_{\geq 0}$, a list is given in Remark~\ref{Rmk:specialtau}. Since the automorphism of a dessin is in general a subgroup of the automorphism of the resulting Riemann surface, it is not clear if we have exhausted all cases. 

\section{Appendix: an exhaustive list of basic spherical triangles}

The followings are graphics of the boundaries of the spherical triangles of finite monodromy in the exhaustive list (except dihedral). They are generated by the spherical geometry tool developed by Heather Pierce in Geogebra. \\\\

\noindent Octahedral:

\noindent
\hfill\adjincludegraphics[scale=0.08,trim={{.1\width} 0 {.1\width} 0}]{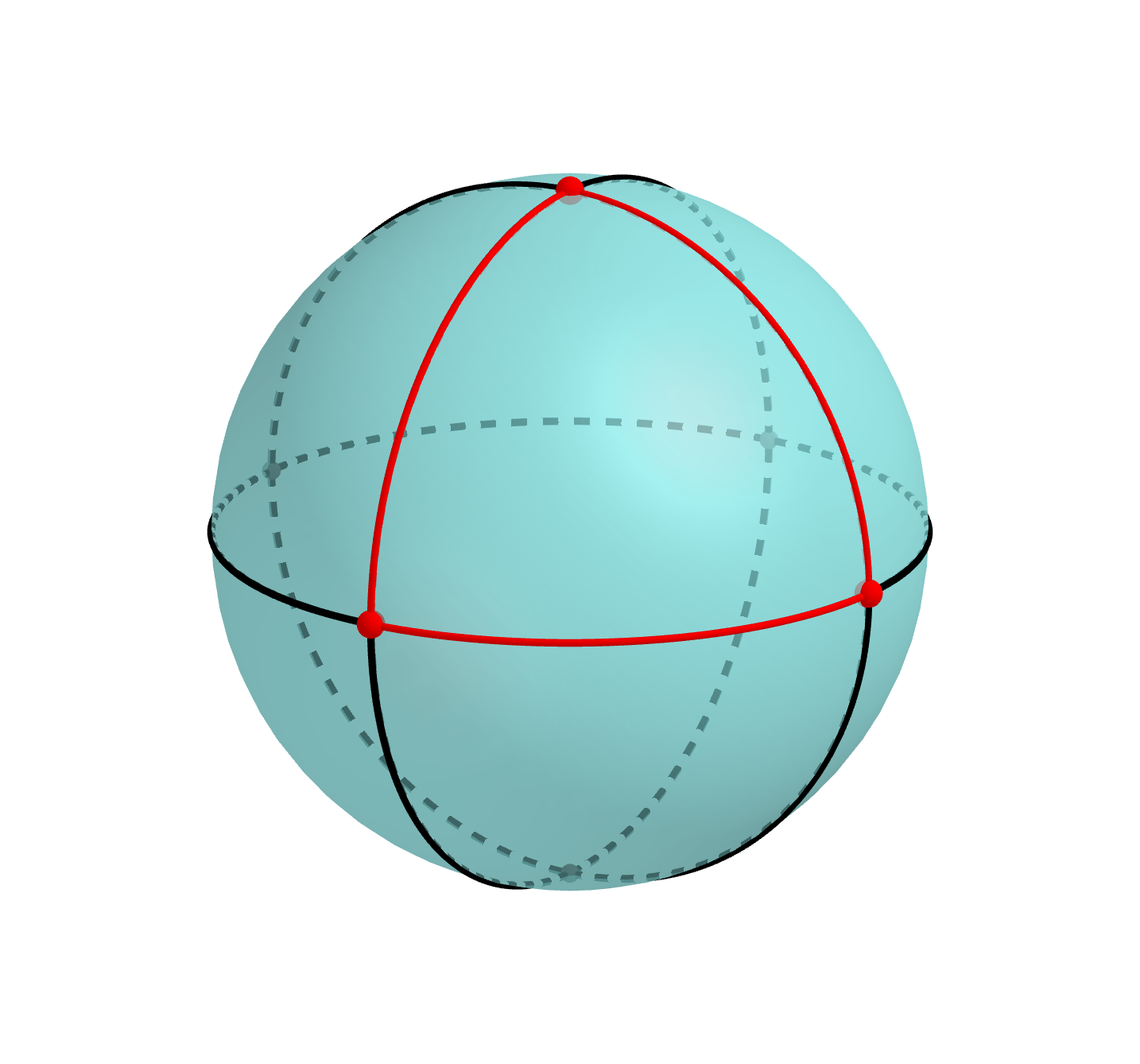}\hfill
\adjincludegraphics[scale=0.08,trim={{.1\width} 0 {.1\width} 0}]{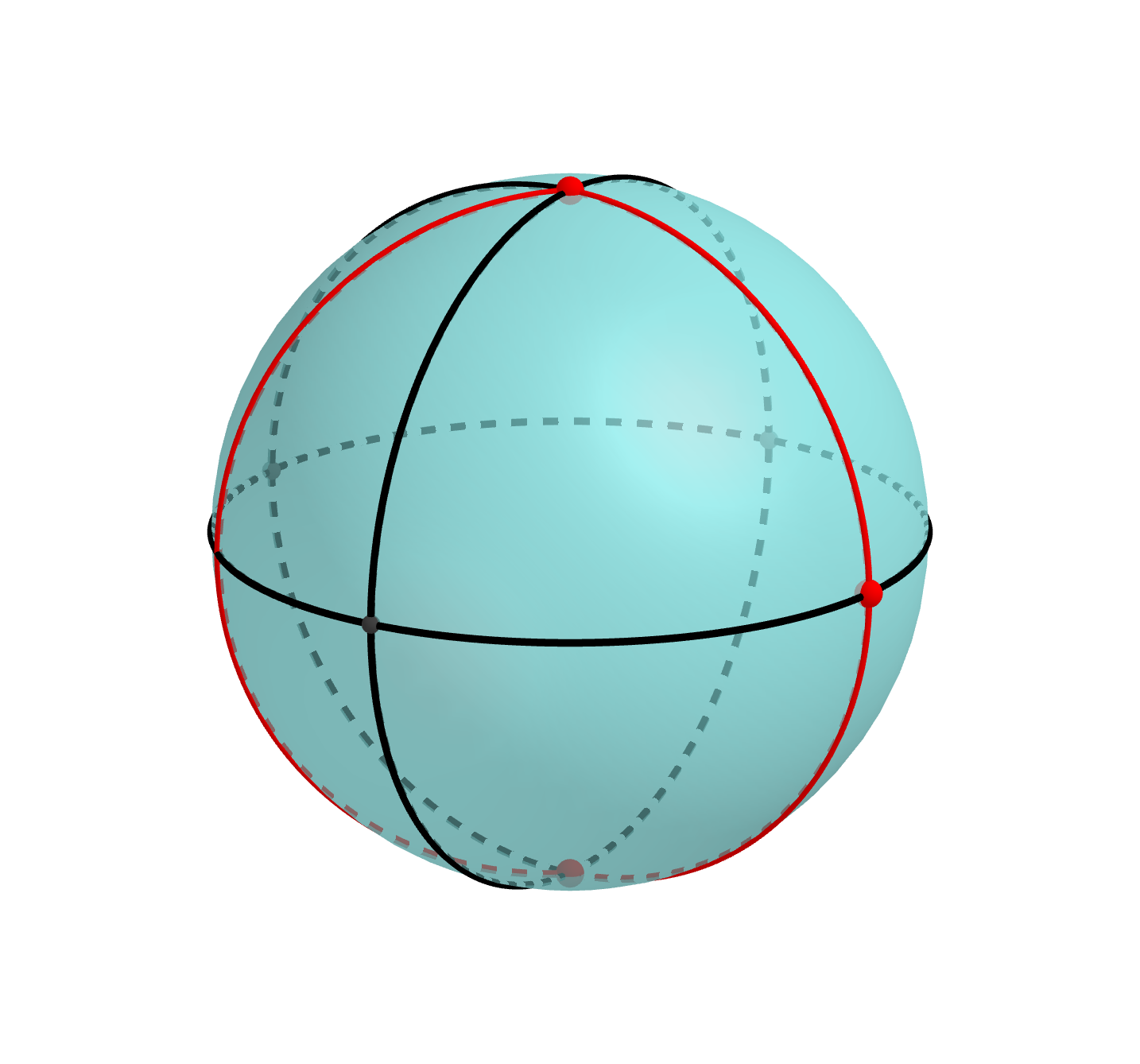} \hfill  \

\noindent
\hspace{25mm} $1\pm \frac{3}{4}$, (1,1,1) \hspace{32mm} $1\pm \frac{1}{4}$,  (1,1,2) 

\noindent Cubical:

\noindent
\adjincludegraphics[scale=0.08,trim={{.1\width} 0 {.1\width} 0}]{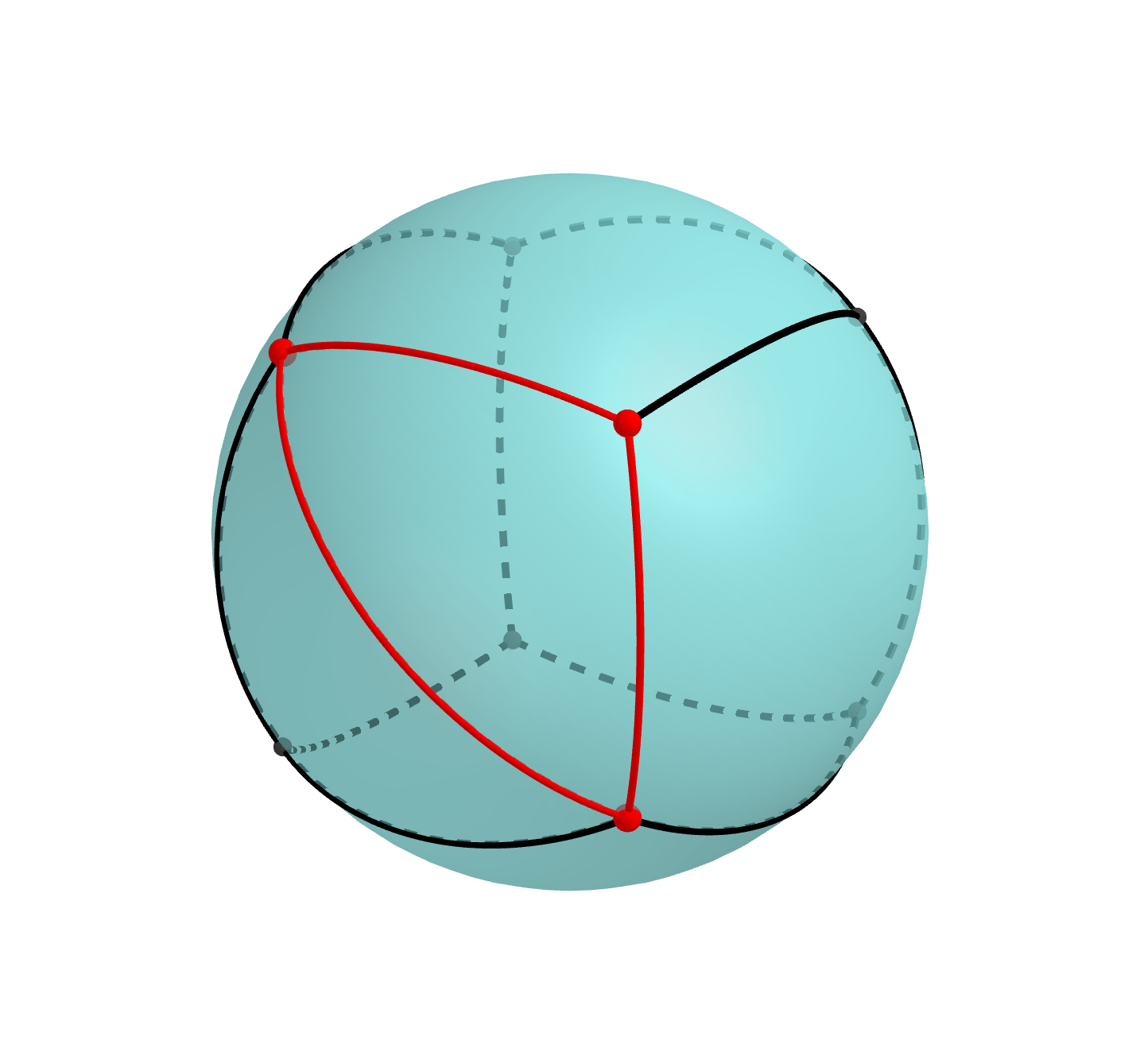}
\adjincludegraphics[scale=0.08,trim={{.1\width} 0 {.1\width} 0}]{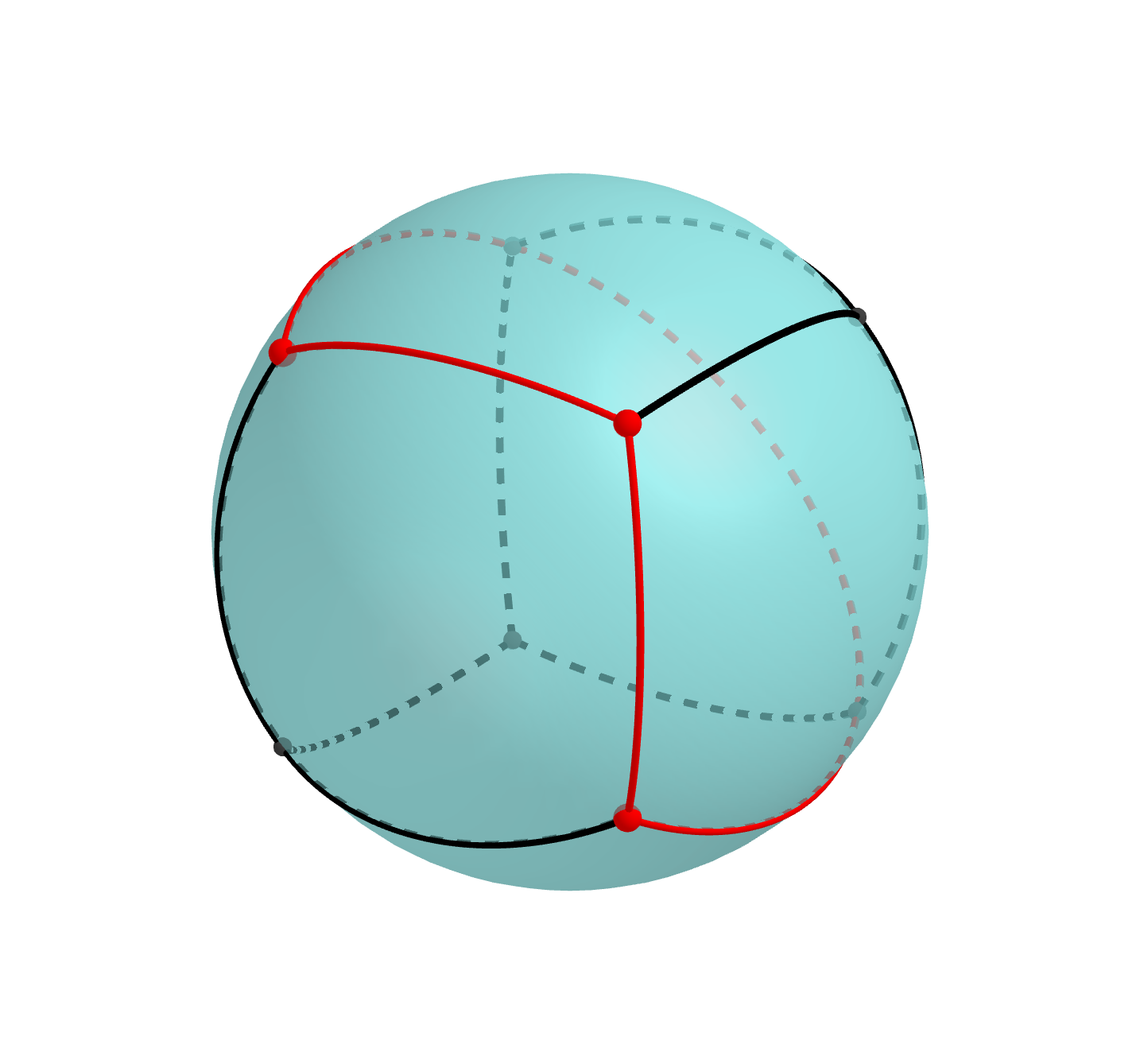}
\adjincludegraphics[scale=0.08,trim={{.1\width} 0 {.1\width} 0}]{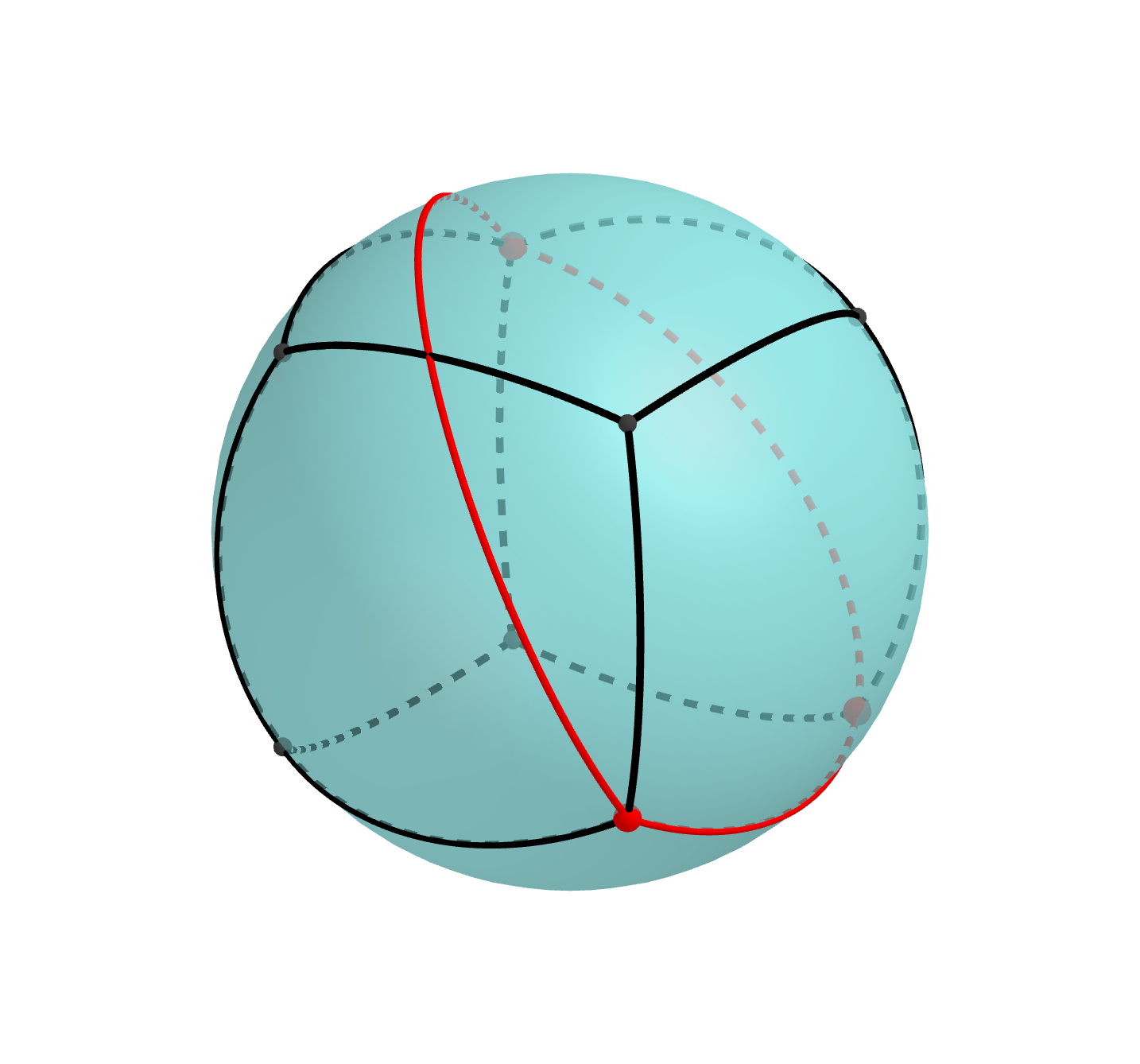}
\adjincludegraphics[scale=0.08,trim={{.1\width} 0 {.1\width} 0}]{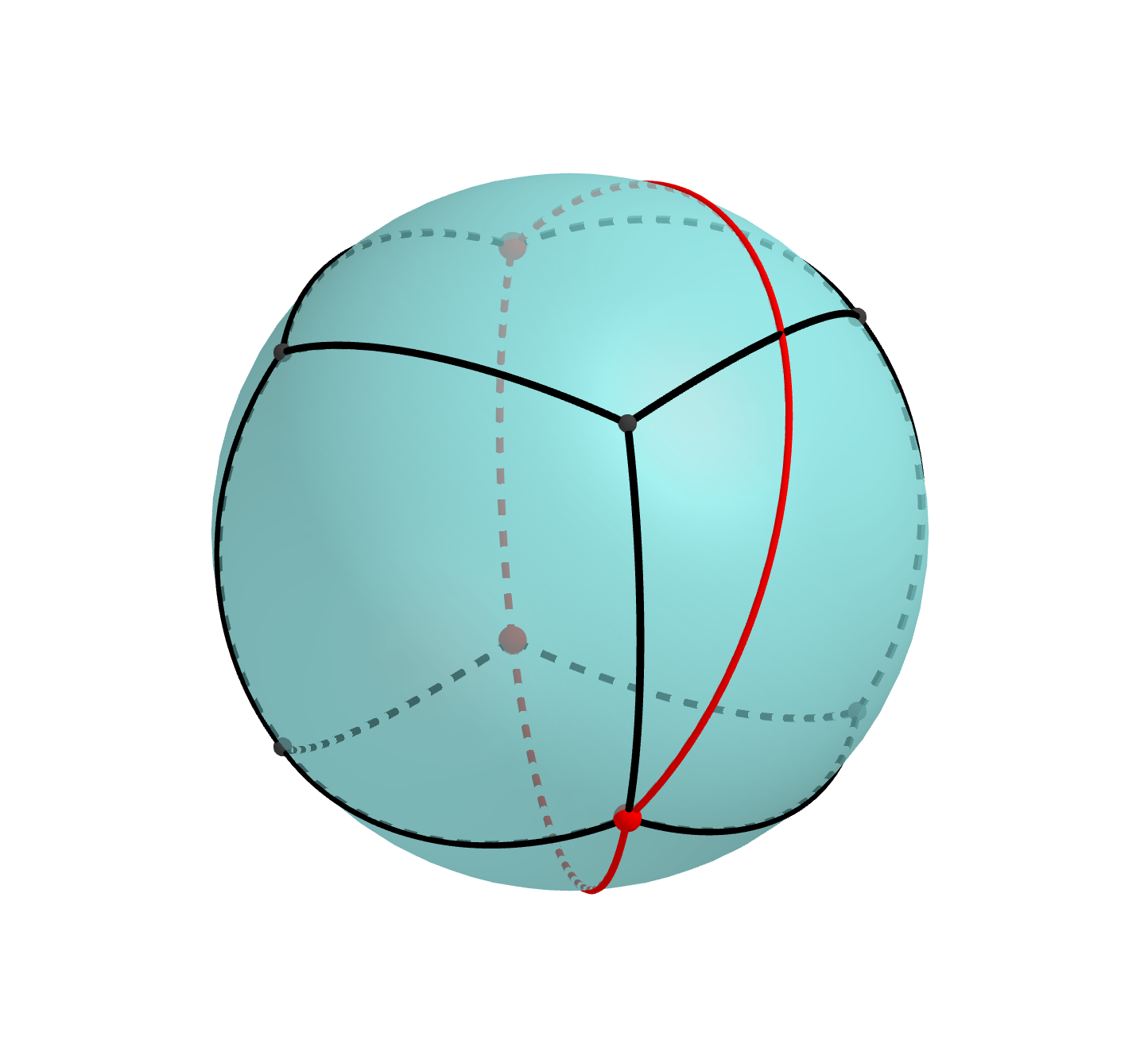}

\noindent
\hspace{7mm} $\frac{1}{6}$, (1,1,2) \hspace{16mm} $\frac{5}{6}$, (1,1,2') \hspace{14mm} $1\pm \frac{1}{6}$, (1,2,3) \hspace{12mm} $1\pm \frac{1}{6}$,  (1,3,2) \\\\

\noindent Icosahedral:

\noindent
\adjincludegraphics[scale=0.08,trim={{.1\width} 0 {.1\width} 0}]{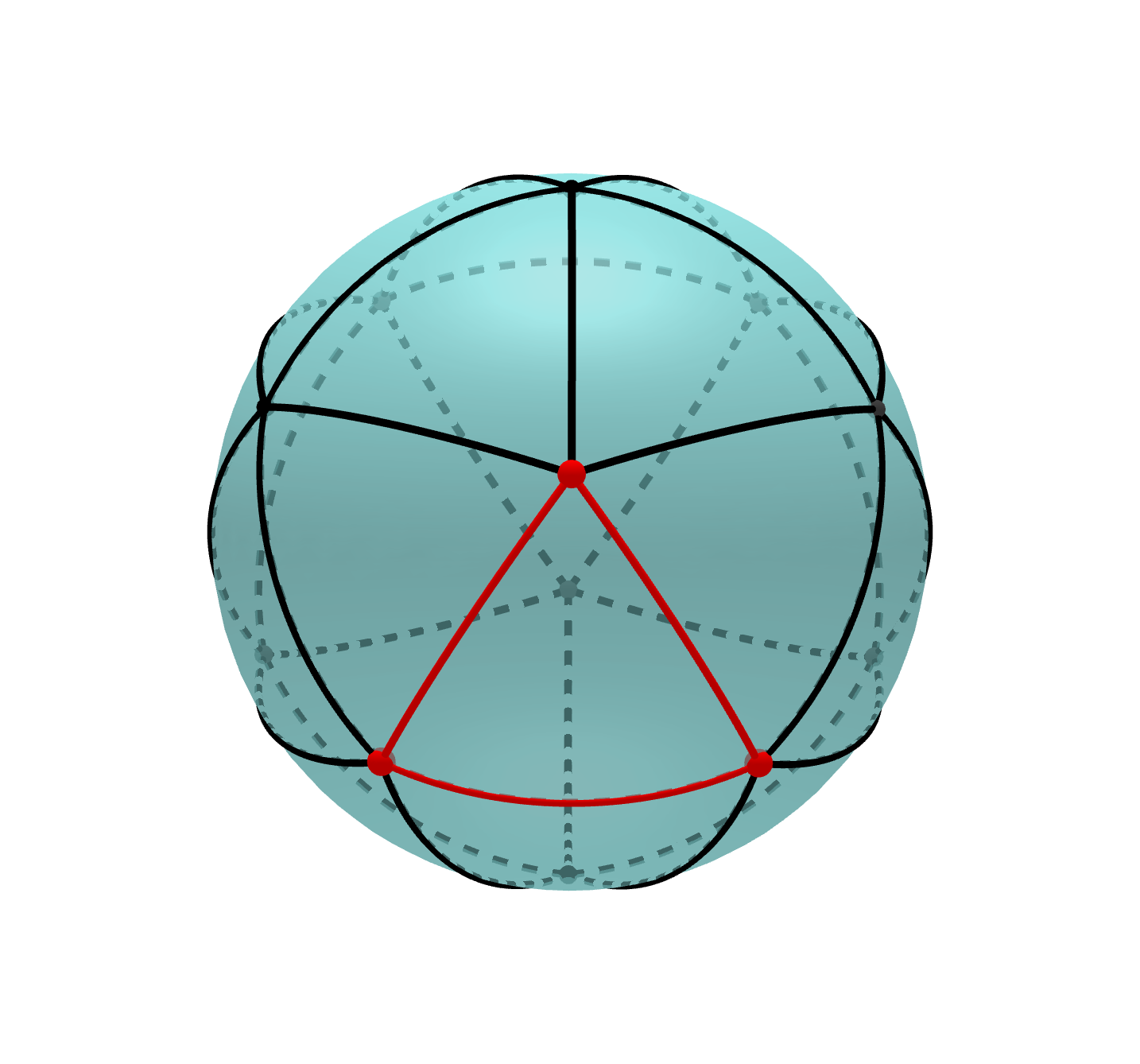}
\adjincludegraphics[scale=0.08,trim={{.1\width} 0 {.1\width} 0}]{spheres_big/icosa2_big.png}
\adjincludegraphics[scale=0.08,trim={{.1\width} 0 {.1\width} 0}]{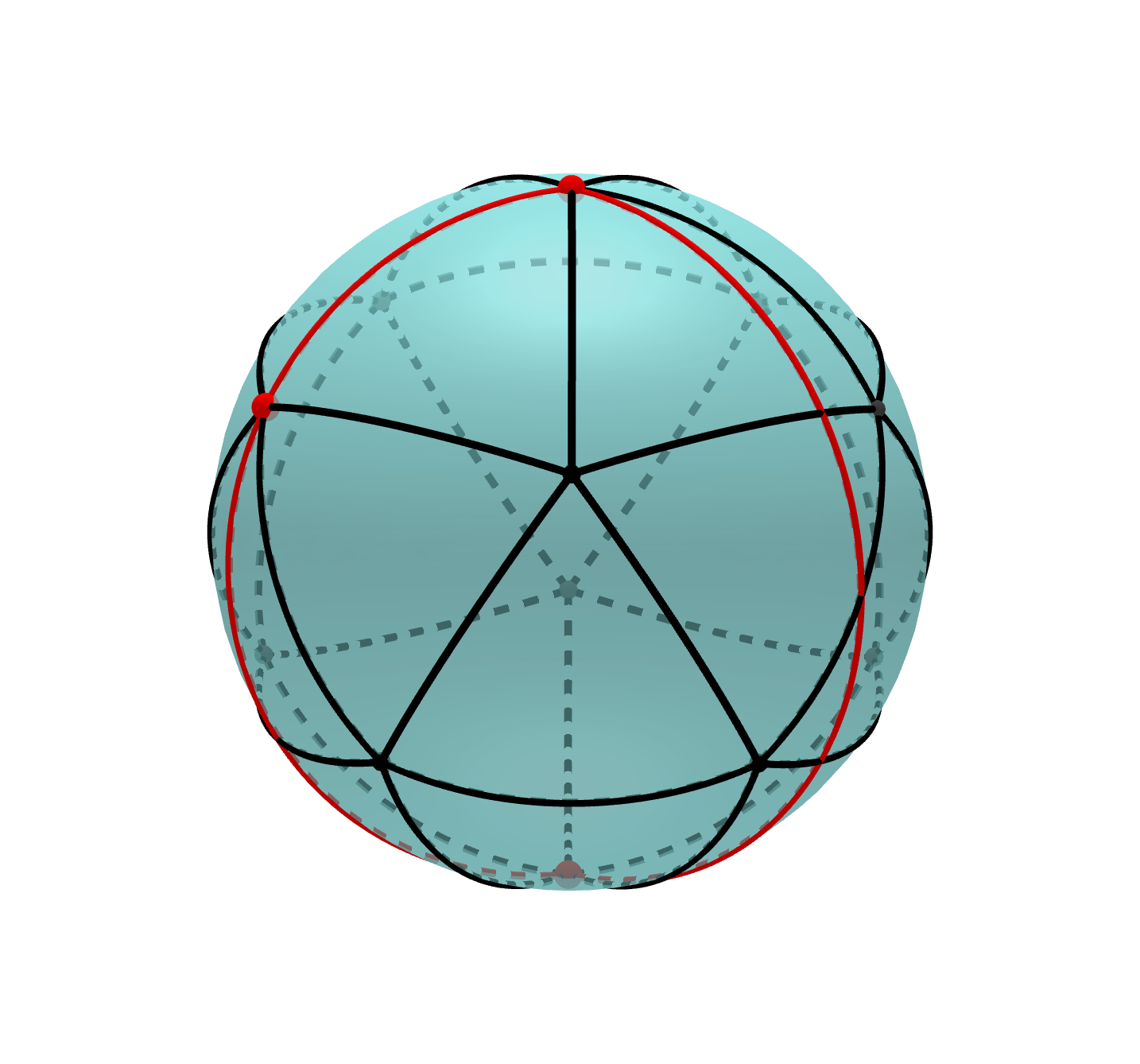}
\adjincludegraphics[scale=0.08,trim={{.1\width} 0 {.1\width} 0}]{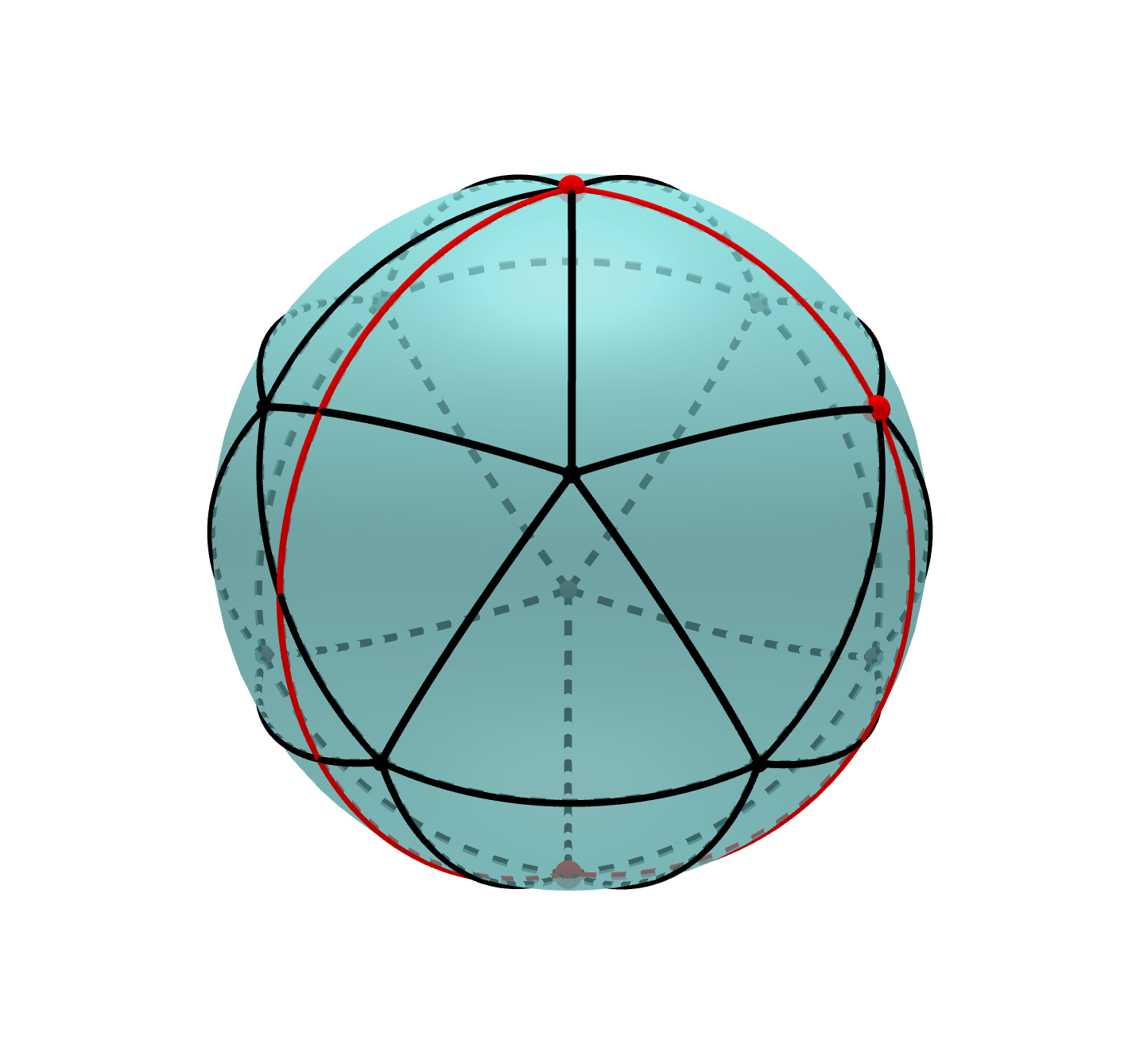}

\noindent
\hspace{4.5mm} $1 \pm \frac{9}{10}$, (1,1,1)\hspace{10mm} $1\pm \frac{7}{10}$,  (1,2,2)  \hspace{10mm} $1\pm \frac{3}{10}$, (1,2,3) \hspace{10mm} $1\pm \frac{3}{10}$, (1,3,2)

\noindent
\adjincludegraphics[scale=0.08,trim={{.1\width} 0 {.1\width} 0}]{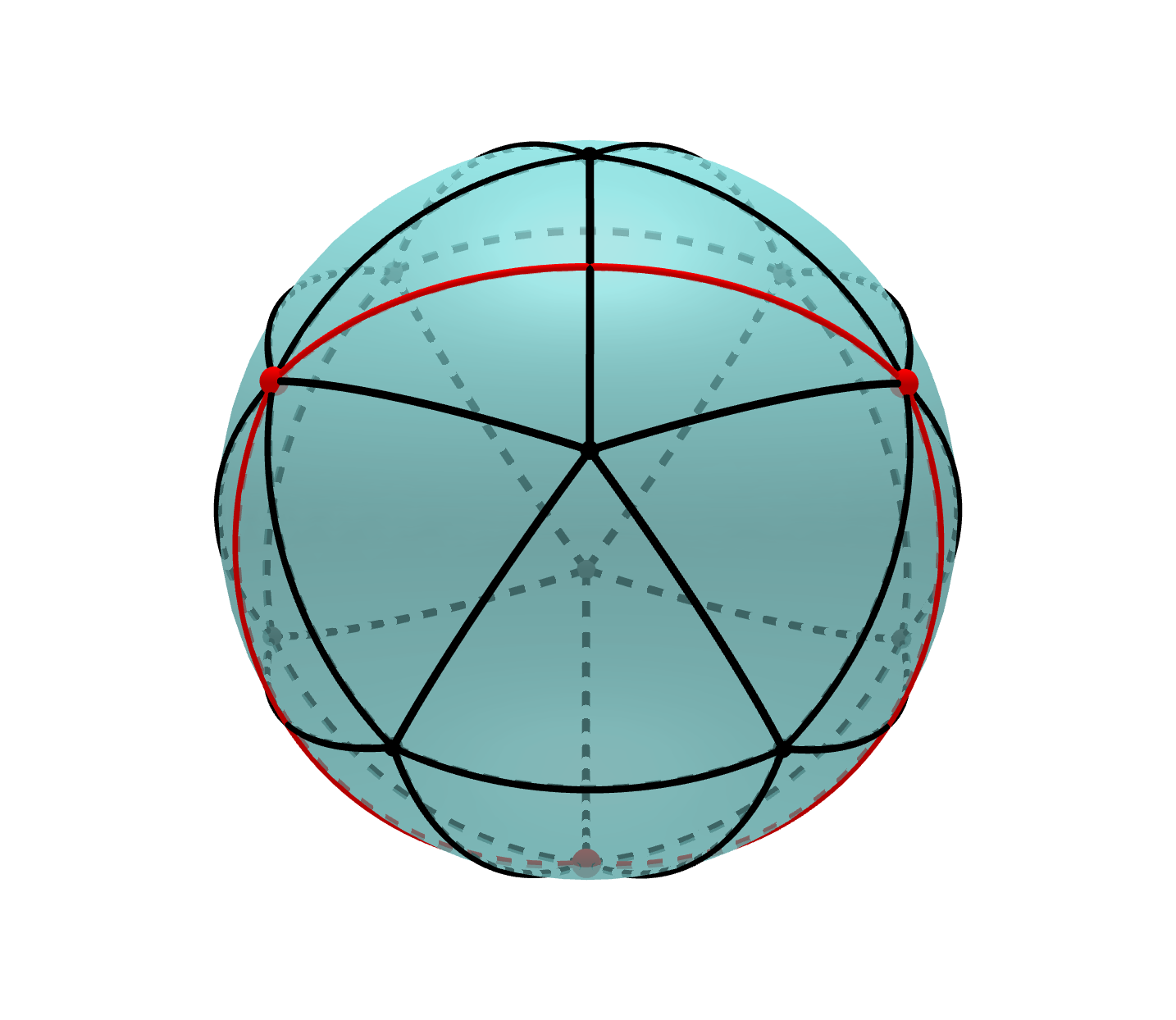}
\adjincludegraphics[scale=0.08,trim={{.1\width} 0 {.1\width} 0}]{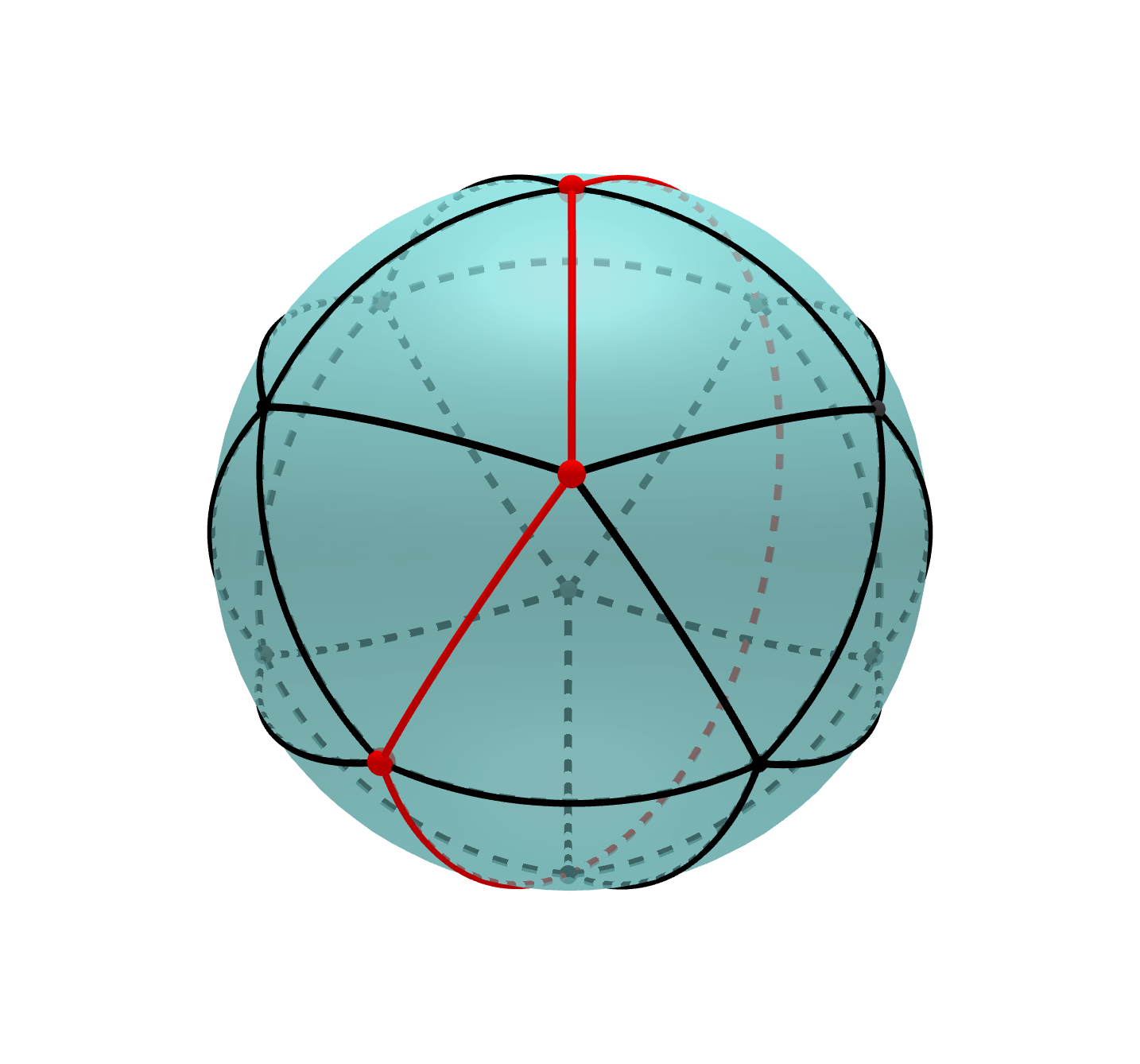}
\adjincludegraphics[scale=0.08,trim={{.1\width} 0 {.1\width} 0}]{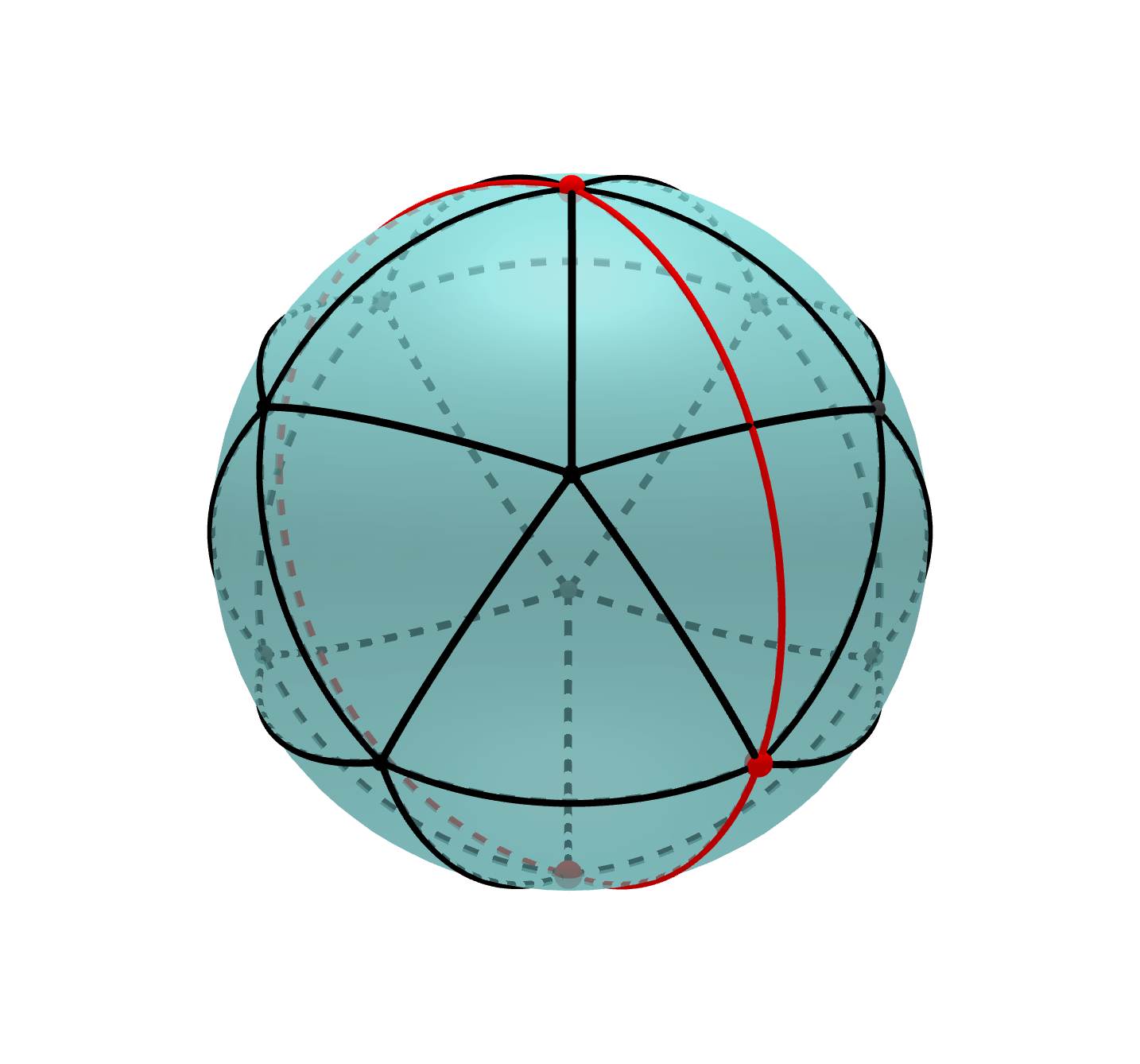}
\adjincludegraphics[scale=0.08,trim={{.1\width} 0 {.1\width} 0}]{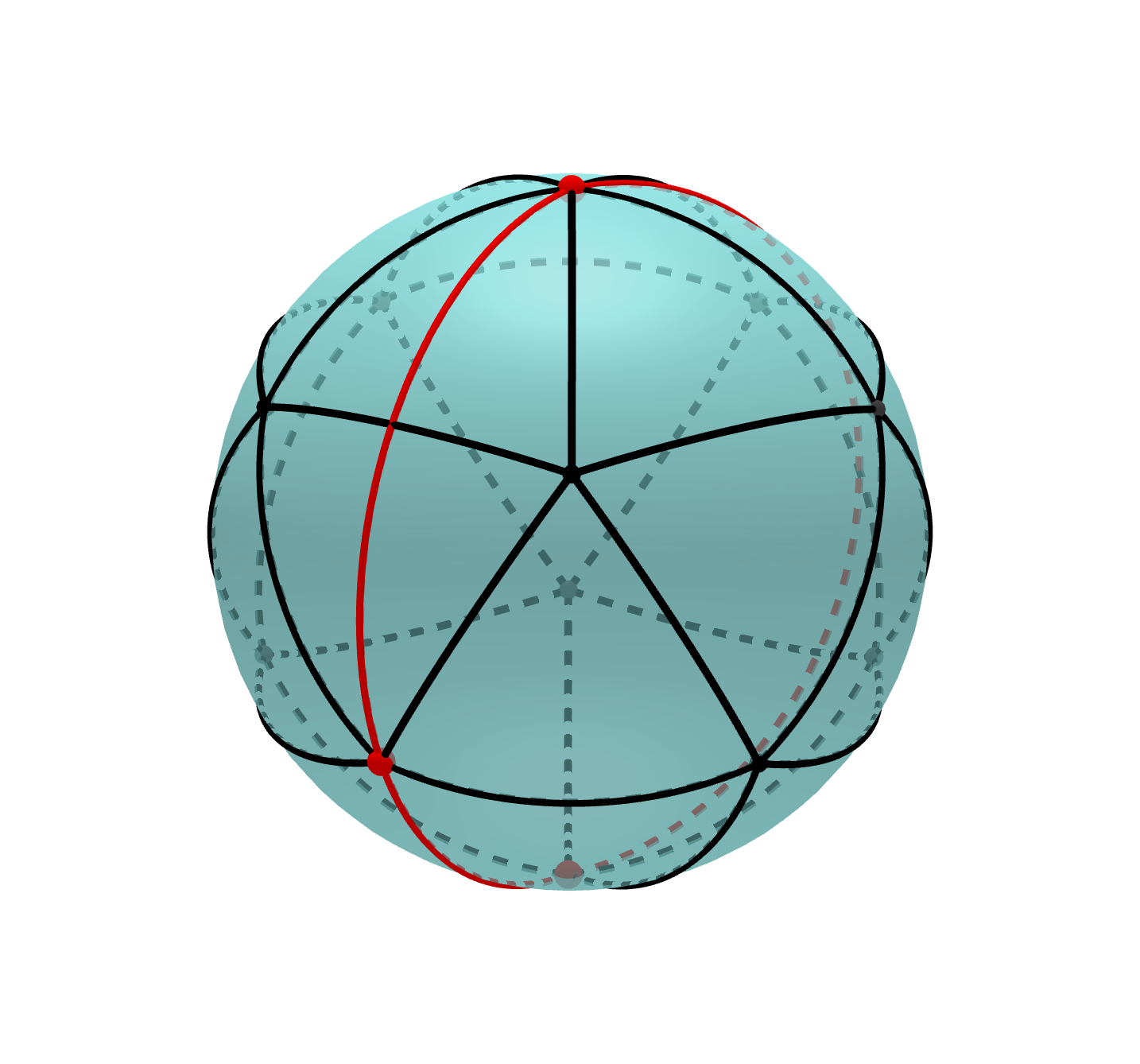}

\noindent
\hspace{4mm} $1\pm \frac{3}{10}$, (2,2,2)\hspace{10mm} $1\pm \frac{1}{10}$, (1,1,2')  \hspace{9.5mm} $1\pm \frac{1}{10}$, (1,2,3) \hspace{10mm} $1\pm \frac{3}{10}$, (1,3,2) \\\\

\noindent Dodecahedral:

\noindent
\hfill\adjincludegraphics[scale=0.08,trim={{.1\width} 0 {.1\width} 0}]{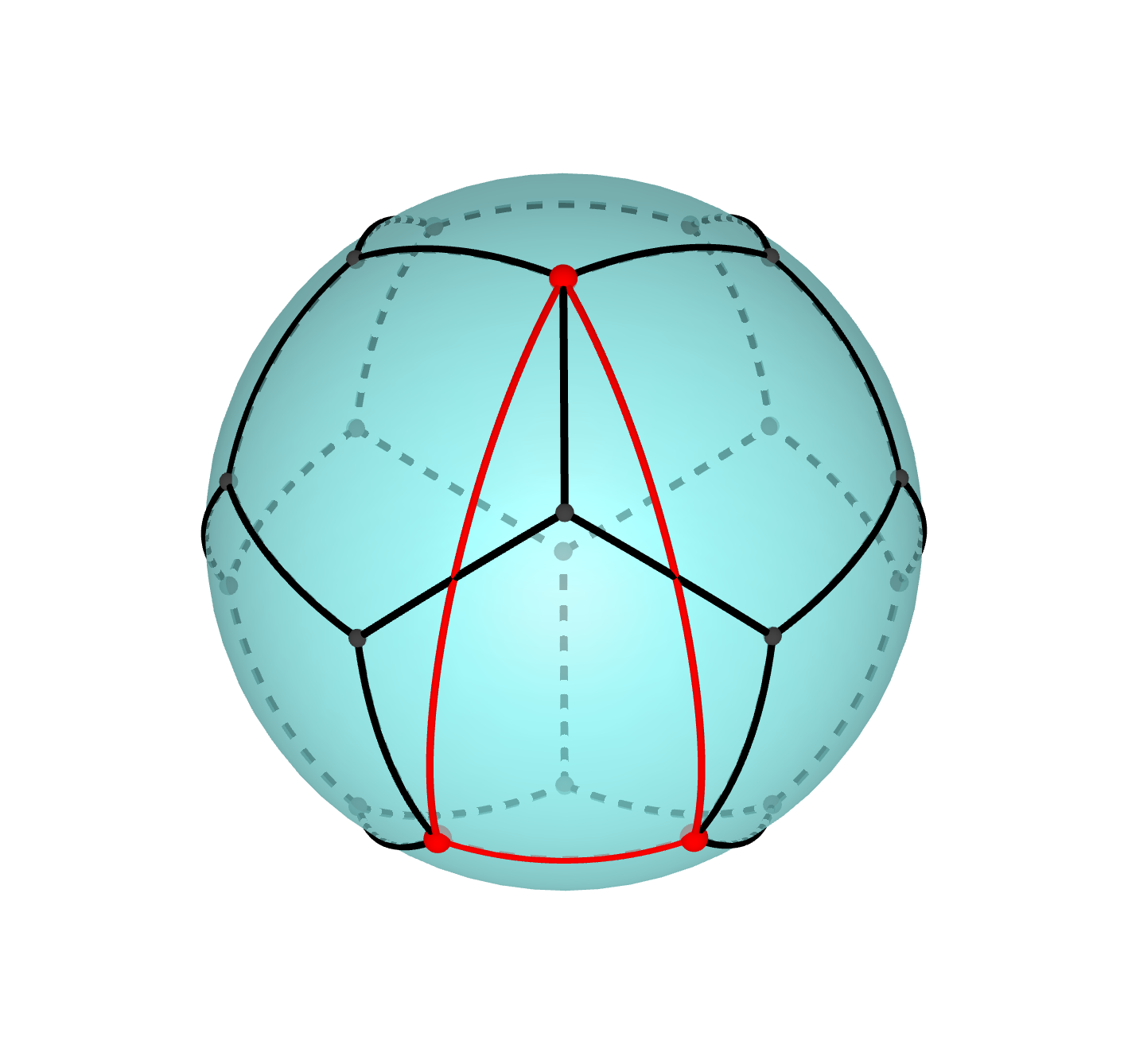}
\hfill\adjincludegraphics[scale=0.08,trim={{.1\width} 0 {.1\width} 0}]{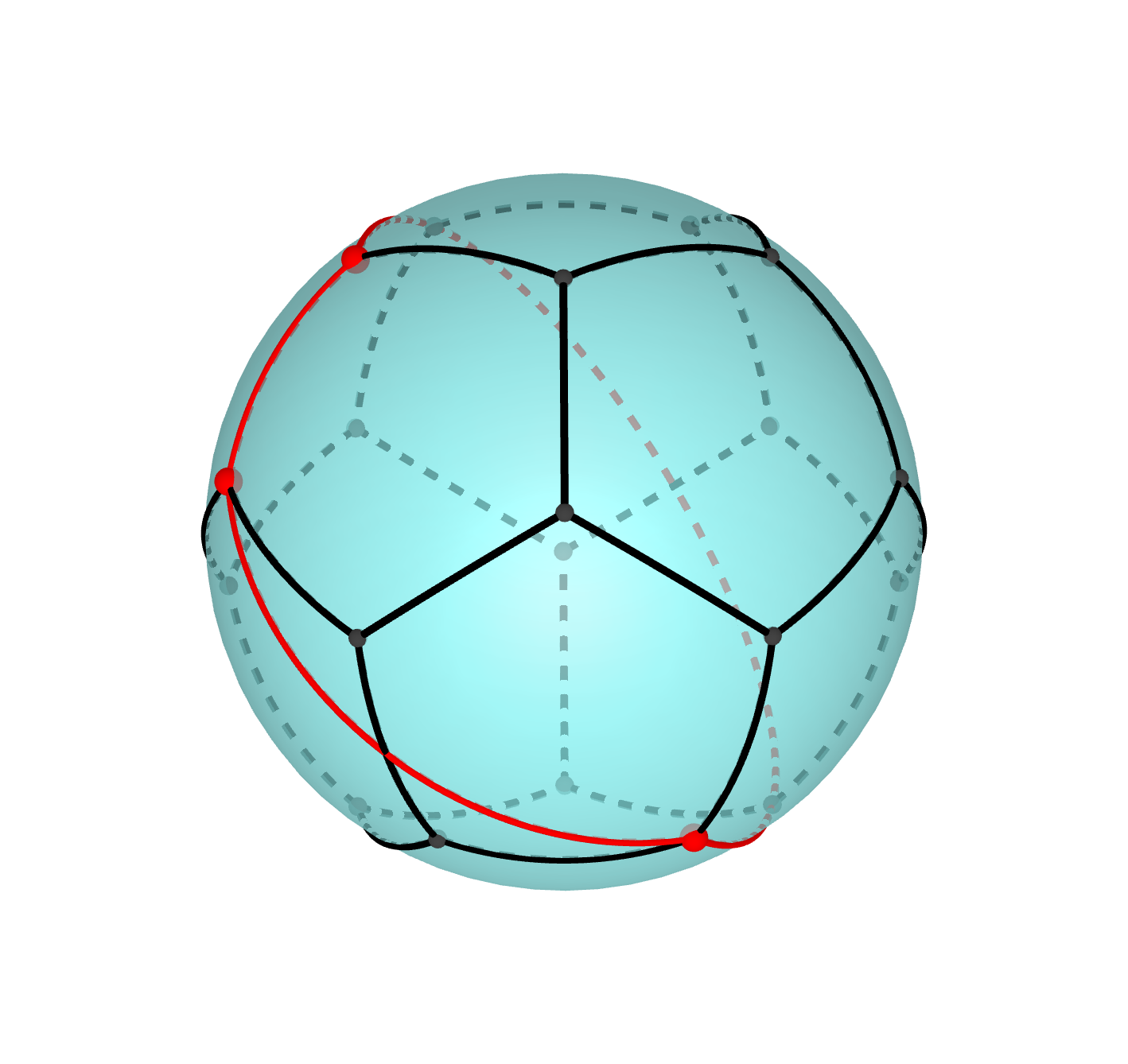}
\hfill\adjincludegraphics[scale=0.08,trim={{.1\width} 0 {.1\width} 0}]{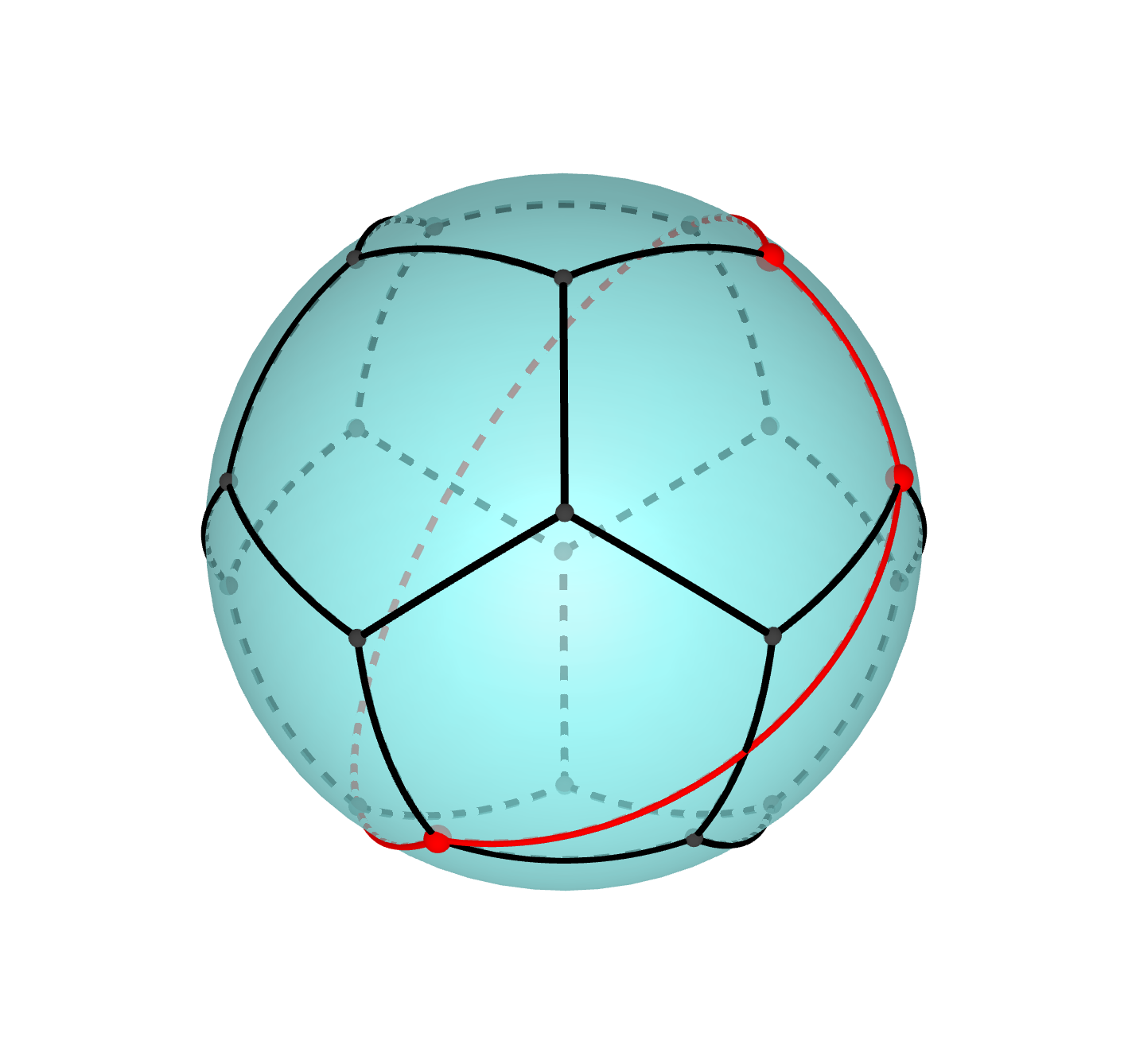}
\hfill \

\noindent 
\hspace{12mm} $1\pm \frac{5}{6}$, (1,3,3) \hspace{16mm} $1\pm \frac{1}{6}$, (1,3,4') \hspace{16mm} $1\pm \frac{1}{6}$, (1,4',3)

\noindent
\hfill\adjincludegraphics[scale=0.08,trim={{.1\width} 0 {.1\width} 0}]{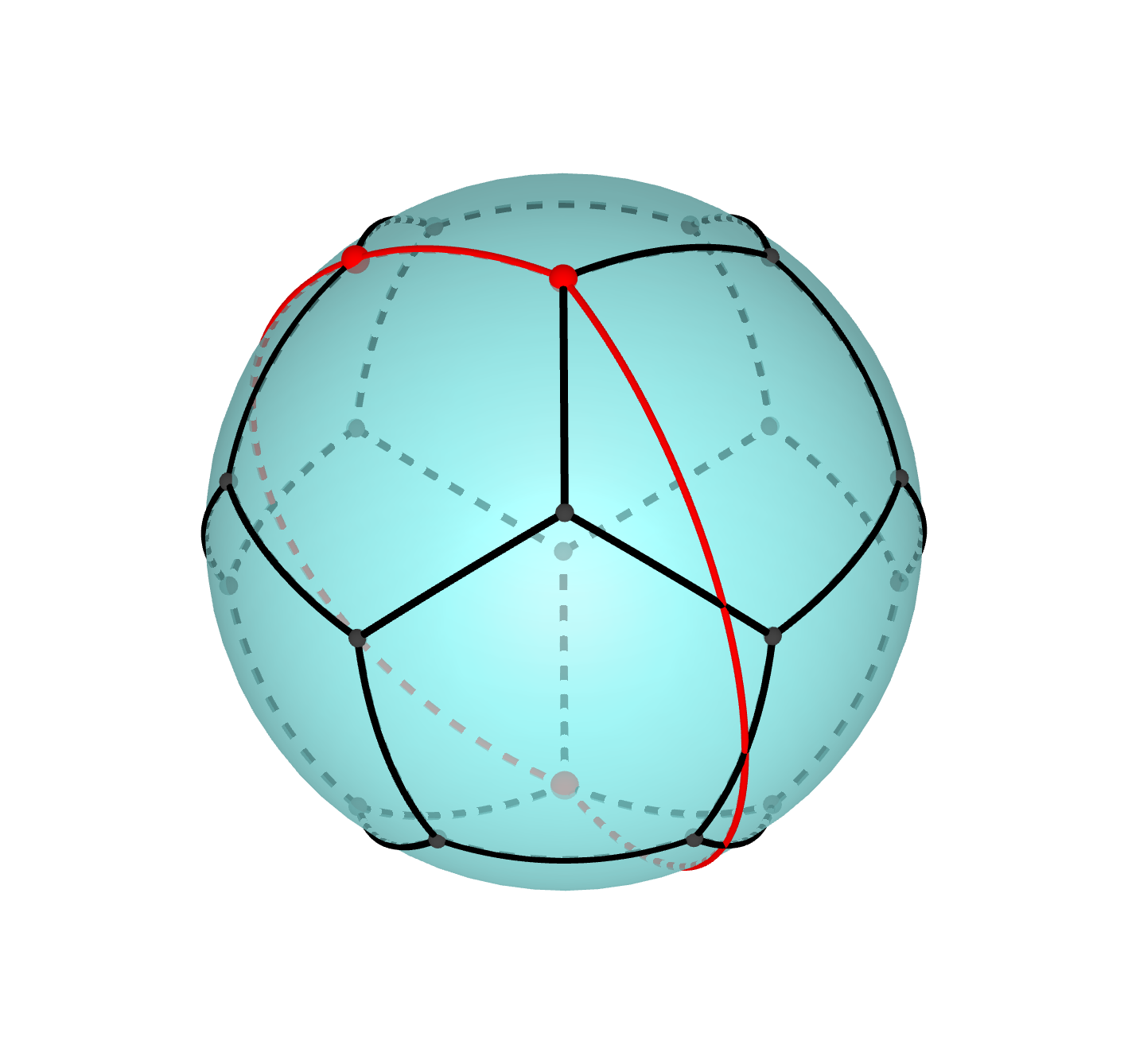}
\hfill\adjincludegraphics[scale=0.08,trim={{.1\width} 0 {.1\width} 0}]{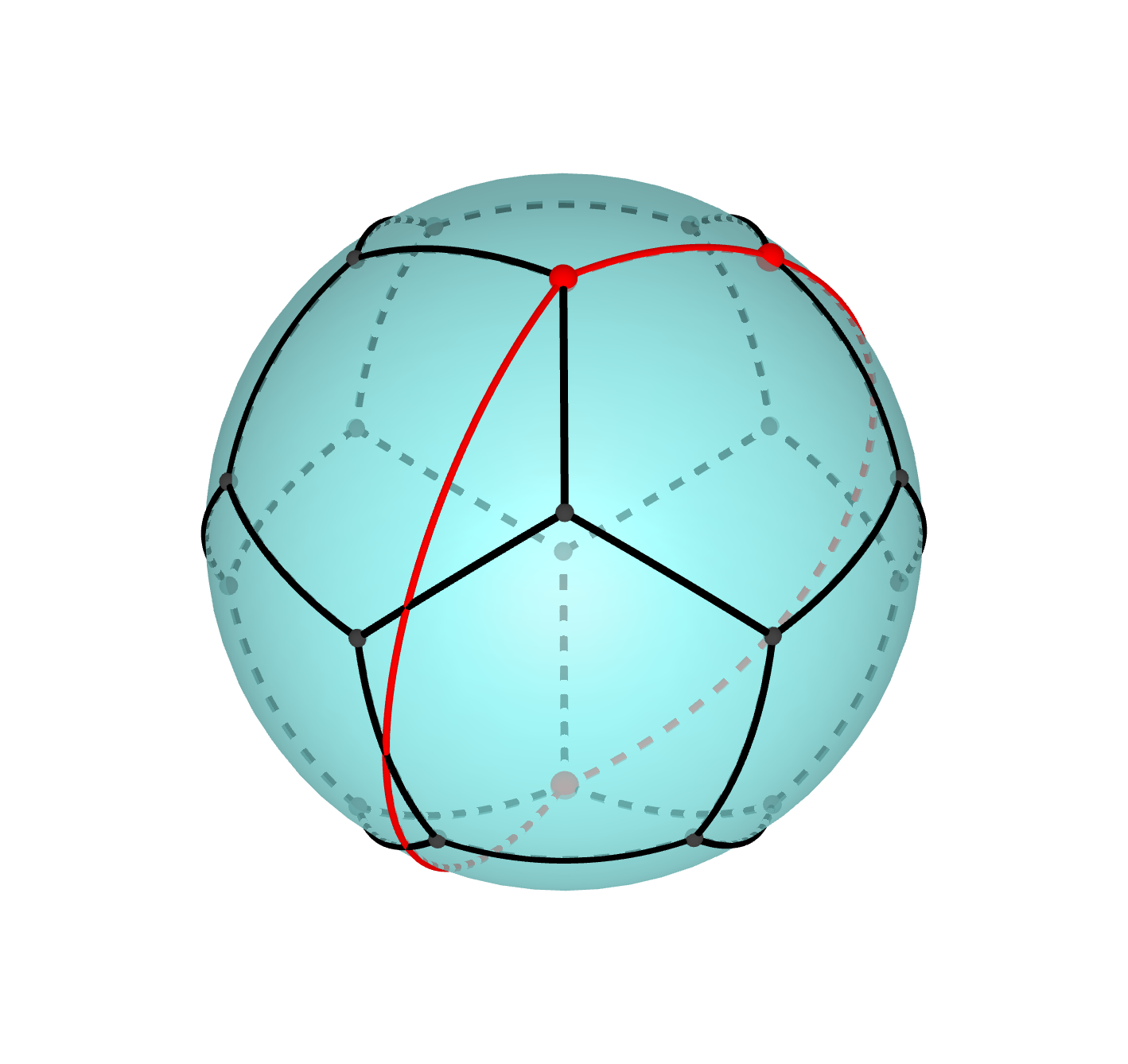}
\hfill\adjincludegraphics[scale=0.08,trim={{.1\width} 0 {.1\width} 0}]{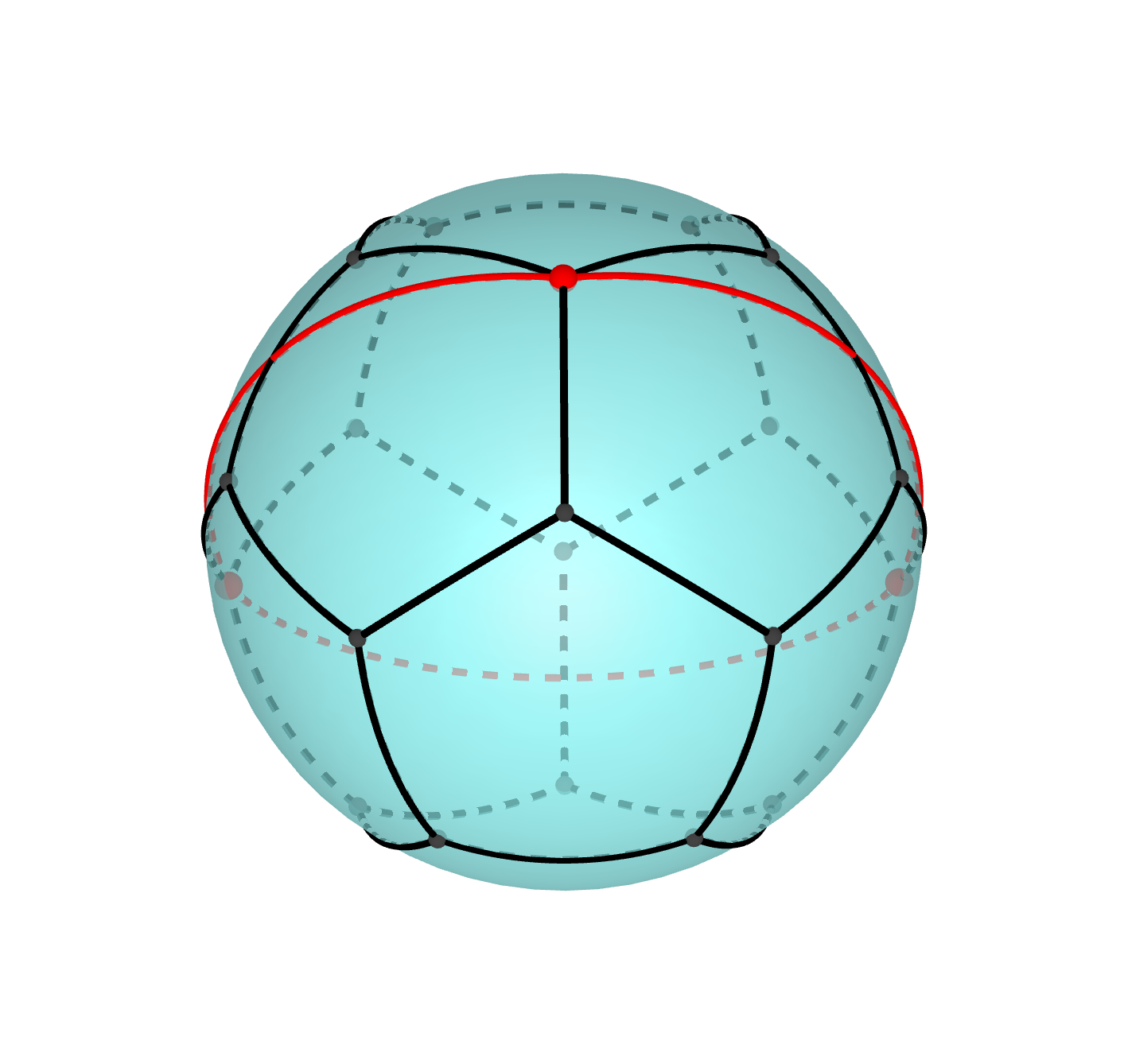}
\hfill \

\noindent 
\hspace{12mm} $1\pm \frac{1}{6}$, (1,4,5) \hspace{16mm} $1\pm \frac{1}{6}$, (1,5,4) \hspace{16mm} $1\pm \frac{1}{6}$, (3,3,4) \\\\


\begin{figure}[H]
    \centering
    \hfill
    \includegraphics[scale=0.18]{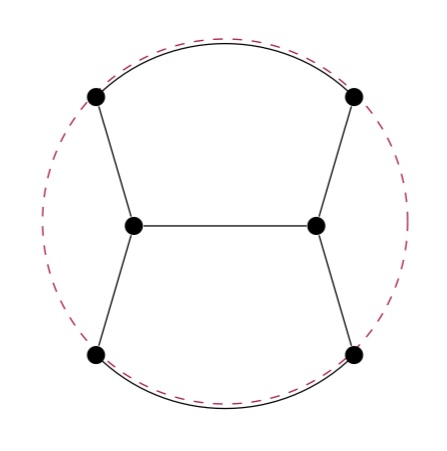}
    \hfill
    \includegraphics[scale=0.18]{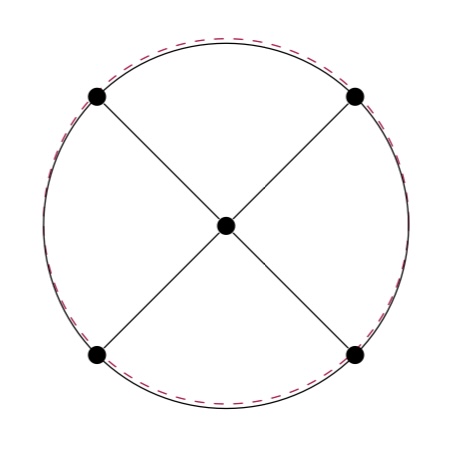}
    \hfill
    \includegraphics[scale=0.18]{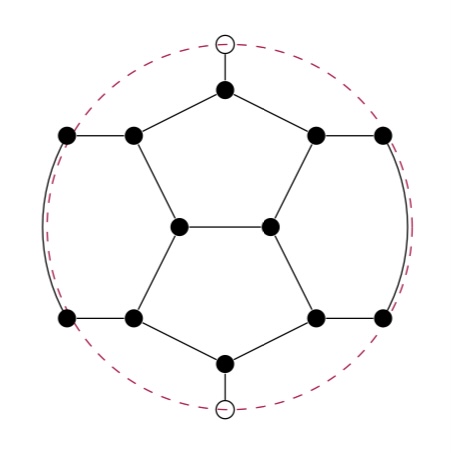}
    \hfill
    \includegraphics[scale=0.18]{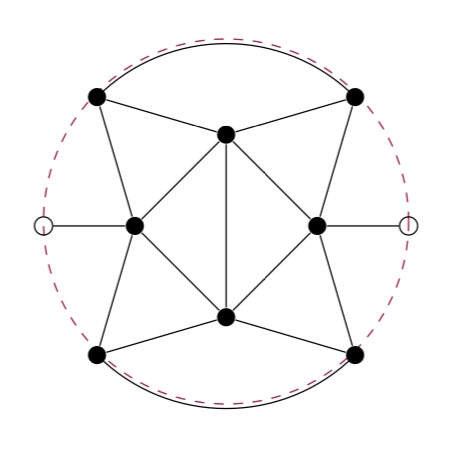}
    \hfill
    
    \noindent
    \hspace{10.3mm} Cube \hspace{12.7mm} Octahedron  \hspace{9mm} Dodecahedron \hspace{7mm} Icosahedron \hfill
    
    \caption{Dessins of the Platonic hemisphere}
    \label{Figure:Hemisphere_Platonic}
\end{figure}
For simplicity, we draw $\bullet - \bullet$ to represent $\bullet - \circ - \bullet$ if no confusion may occur. The dashed line represents the boundary of the hemisphere.

\begin{figure}[H]
    \centering 
    \adjincludegraphics[scale=0.22,trim={{.1\width} 0 {.1\width} 0}]{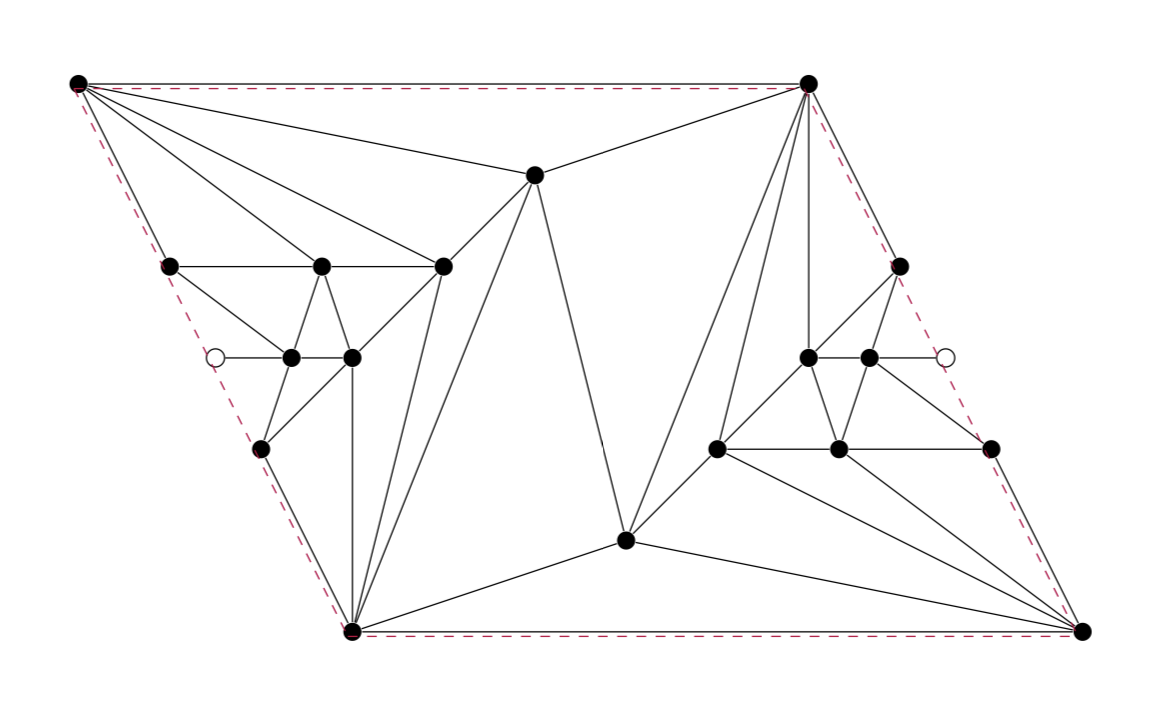}
    \hfill
    \adjincludegraphics[scale=0.11,trim={{.1\width} 0 {.1\width} 0}]{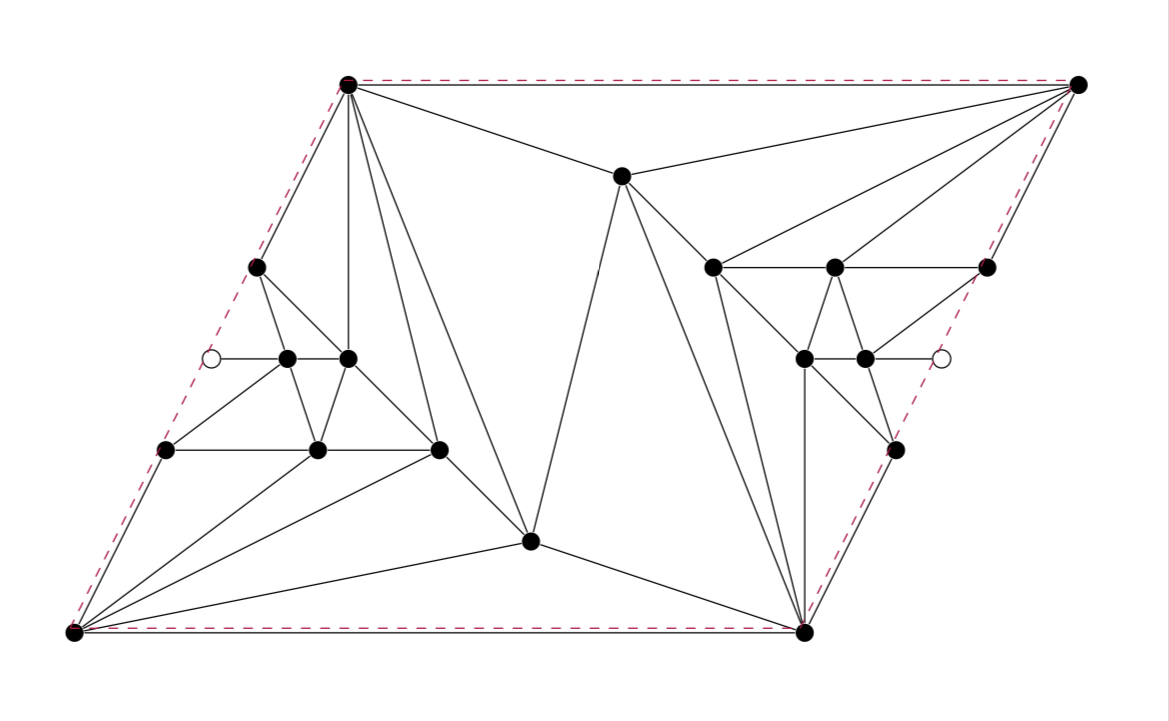}
    \hfill
    \adjincludegraphics[scale=0.22,trim={{.1\width} 0 {.1\width} 0}]{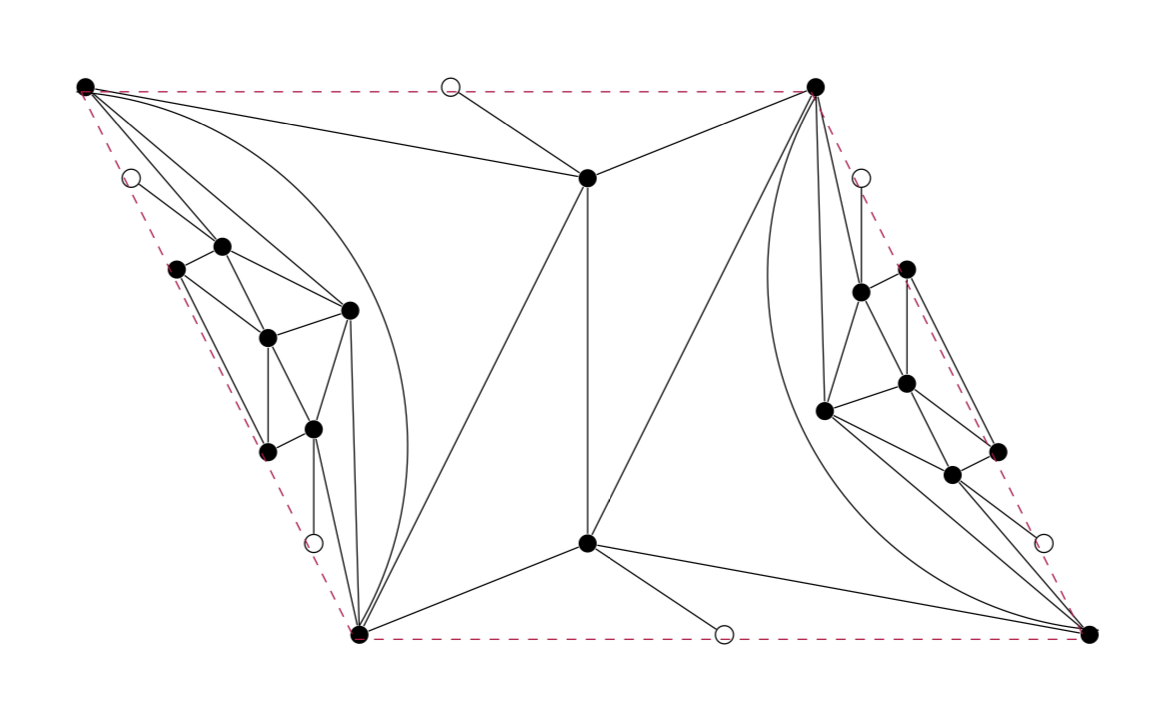}
    \\
      $3/10$, (1,2,2), R \hspace{18mm}   $3/10$, (1,2,2), L  \hspace{18mm} $3/10$, (1,2,2), B 
    \\
    \adjincludegraphics[scale=0.22,trim={{.1\width} 0 {.1\width} 0}]{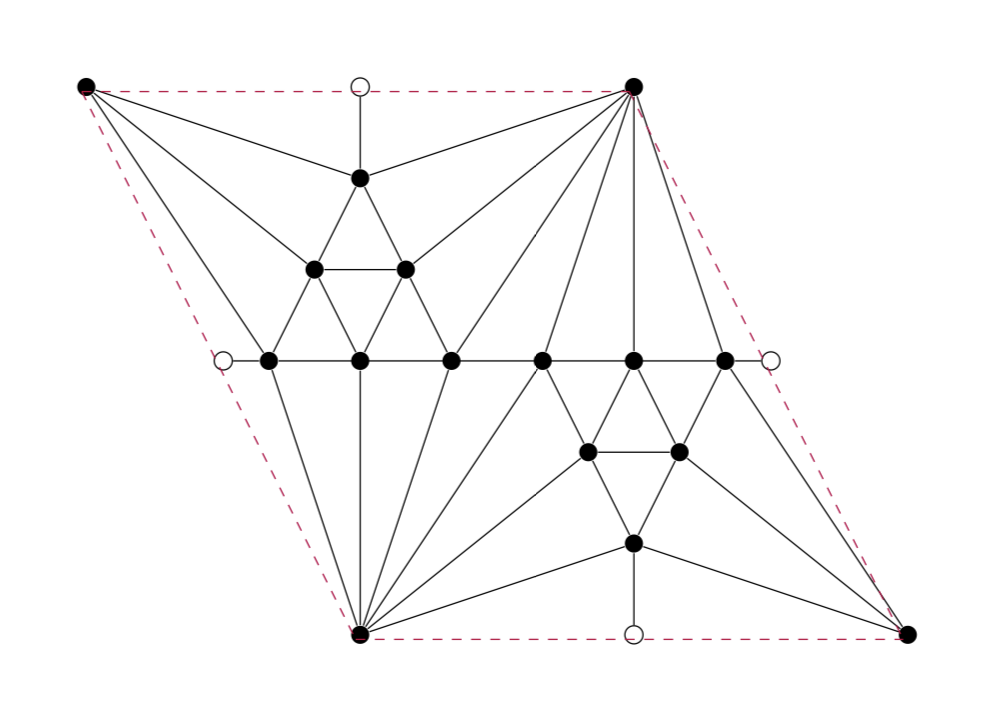}
    \hfill
    \adjincludegraphics[scale=0.22,trim={{.1\width} 0 {.1\width} 0}]{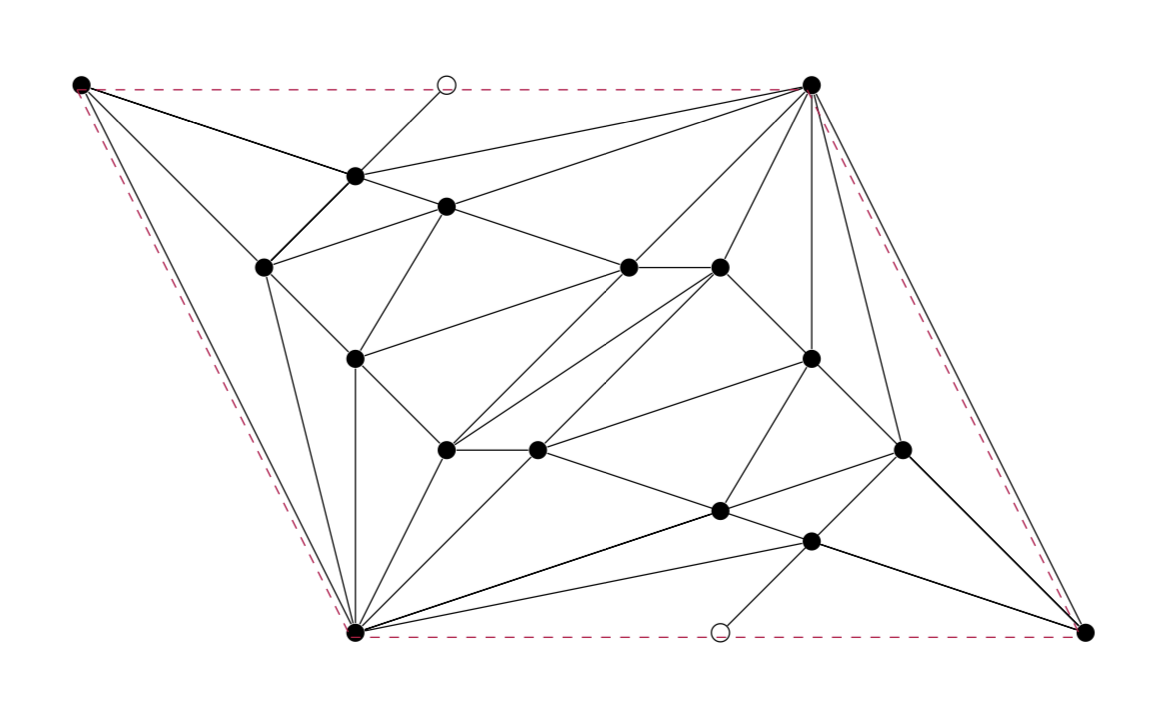}
    \hfill
    \adjincludegraphics[scale=0.11,trim={{.1\width} 0 {.1\width} 0}]{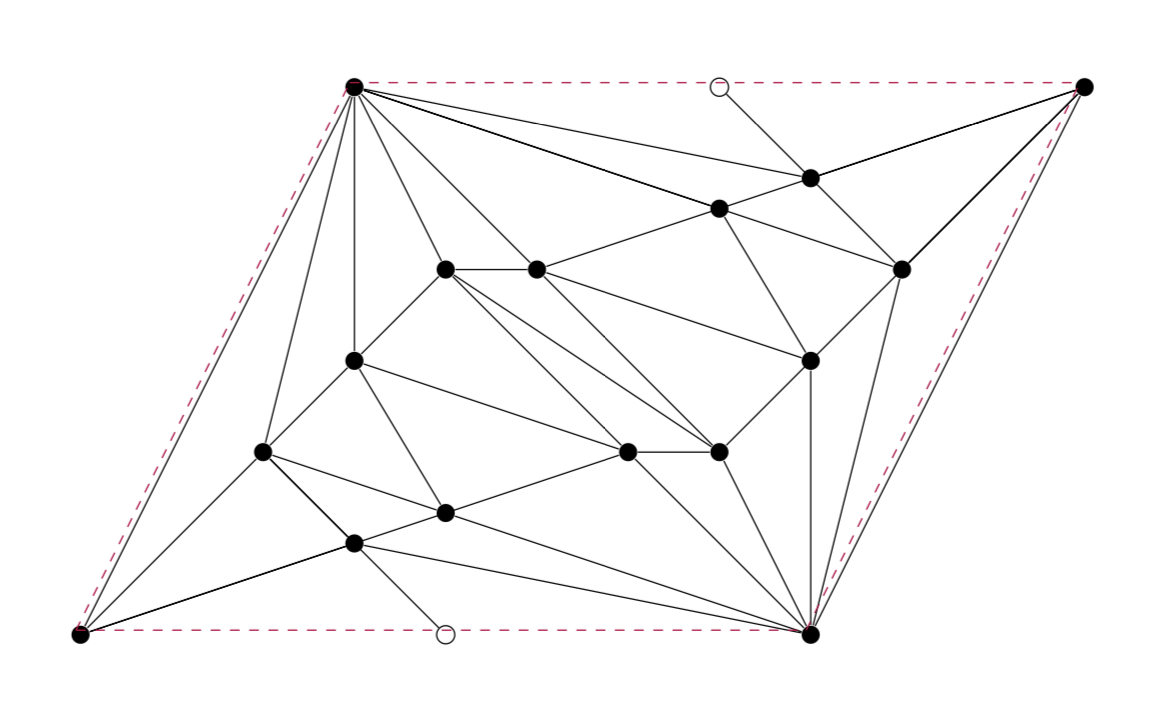}
    \\
    $13/10$, (2,2,2) \hspace{19mm}   $13/10$, (1,2,3)  \hspace{19mm} $13/10$, (1,3,2)
    \caption{Dessins for $L_{13/10}$}
    \label{Figure:Dessins13/10}
\end{figure}

In Figure~\ref{Figure:Dessins13/10}, the graph 3/10, (1,2,2), R (resp. L, B) corresponds to the case ``Icosahedral, $\frac{3}{10}$, (1,2,2)'' with hemisphere attached to the right edge (resp. left edge, bottom edge).

\bibliographystyle{plain}
    
\bibliography{Finite_monodromy}

\end{document}